\documentclass[12pt]{amsart}
\usepackage{stmaryrd}
\usepackage{}
\usepackage{amsmath}
\usepackage{amsfonts}
\usepackage{amssymb}
\usepackage[all,cmtip]{xy}           
\usepackage{xspace}
\usepackage{bbding}
\usepackage{txfonts}
\usepackage[shortlabels]{enumitem}
\usepackage{ifpdf}
\ifpdf
  \usepackage[colorlinks,final,backref=page,hyperindex]{hyperref}
\else
  \usepackage[colorlinks,final,backref=page,hyperindex,hypertex]{hyperref}
\fi
\usepackage{tikz}
\usepackage[active]{srcltx}

\topmargin -.8cm \textheight 22.8cm \oddsidemargin 0cm \evensidemargin -0cm \textwidth 16.3cm


\makeatletter

\newtheorem{theorem}{Theorem}[section]
\newtheorem{prop}[theorem]{Proposition}
\newtheorem{lemma}[theorem]{Lemma}
\newtheorem{coro}[theorem]{Corollary}
\newtheorem{prop-def}{Proposition-Definition}[section]

\theoremstyle{definition}
\newtheorem{defn}[theorem]{Definition}

\newtheorem{remark}[theorem]{Remark}
\newtheorem{exam}[theorem]{Example}

\newcommand{\nc}{\newcommand}


\nc{\delete}[1]{{}}
\nc{\mmargin}[1]{}

\nc{\mlabel}[1]{\label{#1}}  
\nc{\mcite}[1]{\cite{#1}}  
\nc{\mref}[1]{\ref{#1}}  
\nc{\meqref}[1]{\eqref{#1}} 
\nc{\mbibitem}[1]{\bibitem{#1}} 

\delete{
\nc{\mlabel}[1]{\label{#1}  
{\hfill \hspace{1cm}{\bf{{\ }\hfill(#1)}}}}
\nc{\mcite}[1]{\cite{#1}{{\bf{{\ }(#1)}}}}  
\nc{\mref}[1]{\ref{#1}{{\bf{{\ }(#1)}}}}  
\nc{\meqref}[1]{\eqref{#1}{{\bf{{\ }(#1)}}}} 
\nc{\mbibitem}[1]{\bibitem[\bf #1]{#1}} 
}


\nc{\twrba}{that is a twisted Rota-Baxter algebra\xspace}
\nc{\bfn}{\mathbf{n}}

\nc{\shadow}{phantom\xspace}
\nc{\Shadow}{Phantom\xspace}
\nc{\shad}{\theta}
\nc{\tforall}{\text{ for all }}
\nc{\ddiff}{D-\text{differential}\xspace}
\nc{\oplin}{operator linear\xspace}
\nc{\fopav}{F_A(\Omega,V)}
\nc{\tw}{twisted\xspace}
\nc{\wtd}{weighted\xspace}
\nc{\wt}{weight\xspace}
\nc{\wte}{\lambda}
\nc{\wtddiff}{shifted differential\xspace}
\nc{\coa}{\overline{\mathfrak{OA}}} 
\nc{\oa}{\mathfrak{OA}} 

\nc{\mrba}{MRBA\xspace}
\nc{\mrbas}{MRBAs\xspace}
\nc{\mtrba}{MTRBA\xspace}		
\nc{\mtrbas}{MTRBAs\xspace}		
\nc{\smtrba}{MTRBA\xspace}
\nc{\smtrbas}{MTRBAs\xspace}

\nc{\fopaa}{F(\Omega,C)}
\nc{\fopab}{F_A(\Omega,B)}
\nc{\algw}{\frakS}	
\nc{\opw}{\Delta}	
\nc{\ralgw}{\frakI}	
\nc{\ropw}{\Pi}		
\nc{\sett}{\calt}	
\nc{\algt}{\frakT}	
\nc{\opt}{\Lambda}	
\nc{\rsett}{\cale}	
\nc{\ralgt}{\frakE}	
\nc{\ropt}{\Gamma}	

\nc{\tprod}{{\scriptsize{\veebar}}}
\nc{\graftprod}{grafting product\xspace}
\nc{\extnop}{extension operator\xspace}
\nc{\frakT}{\mathfrak{T}}

\nc{\frakJ}{\mathfrak{J}}
\nc{\frakI}{\mathfrak{I}}

\nc{\eva}{\mathrm{eva}}

\nc{\mrho}{\rho}
\nc{\mfraka}{\tau}
\nc{\rtwist}{\tau}
\nc{\stmap}{j}

\nc{\frmtrba}{F_{\mathrm{\mtrba}}}
\nc{\frmrba}{F_{\mathrm{MRBA}}}
\nc{\mj}{j}
\nc{\freet}[1]{\free{#1}_{\mathrm{MTRBA}}}
\nc{\freem}[1]{\free{#1}_{\mathrm{MRBA}}}

\nc{\trsha}{\frmrba}

\nc{\reypr}{\circledast} 	

\nc{\lc}{\lfloor} \nc{\rc}{\rfloor}
\nc{\free}[1]{\overline{#1}}
\nc{\Id}{\mathrm{Id}}
\nc{\lra}{\longrightarrow}
\nc{\hra}{\hookrightarrow}

\nc{\rsha}{\sha^{\rm rel}}

\newcommand{\bk}{{\mathbf{k}}}


\nc{\vep}{\varepsilon}
\nc{\bin}[2]{ (_{\stackrel{\scs{#1}}{\scs{#2}}})}  
\nc{\binc}[2]{(\!\! \begin{array}{c} \scs{#1}\\
    \scs{#2} \end{array}\!\!)}  
\nc{\bincc}[2]{  ( {\scs{#1} \atop
    \vspace{-1cm}\scs{#2}} )}  
\nc{\bs}{\bar{S}}
\nc{\ra}{\longleftarrow}
\nc{\ot}{\otimes}
\nc{\rar}{\rightarrow}
\nc{\dar}{\downarrow}
\nc{\dap}[1]{\downarrow \rlap{$\scriptstyle{#1}$}}
\nc{\defeq}{\stackrel{\rm def}{=}}
\nc{\dis}[1]{\displaystyle{#1}}
\nc{\dotcup}{\ \displaystyle{\bigcup^\bullet}\ }
\nc{\hcm}{\ \hat{,}\ }
\nc{\hts}{\hat{\otimes}}
\nc{\hcirc}{\hat{\circ}}
\nc{\lleft}{[}
\nc{\lright}{]}
\nc{\curlyl}{\left \{ \begin{array}{c} {} \\ {} \end{array}
    \right .  \!\!\!\!\!\!\!}
\nc{\curlyr}{ \!\!\!\!\!\!\!
    \left . \begin{array}{c} {} \\ {} \end{array}
    \right \} }
\nc{\longmid}{\left | \begin{array}{c} {} \\ {} \end{array}
    \right . \!\!\!\!\!\!\!}
\nc{\ora}[1]{\stackrel{#1}{\rar}}
\nc{\ola}[1]{\stackrel{#1}{\la}}
\nc{\scs}[1]{\scriptstyle{#1}} \nc{\mrm}[1]{{\rm #1}}
\nc{\dirlim}{\displaystyle{\lim_{\longrightarrow}}\,}
\nc{\invlim}{\displaystyle{\lim_{\longleftarrow}}\,}
\nc{\dislim}[1]{\displaystyle{\lim_{#1}}} \nc{\colim}{\mrm{colim}}
\nc{\mvp}{\vspace{0.3cm}} \nc{\tk}{^{(k)}} \nc{\tp}{^\prime}
\nc{\ttp}{^{\prime\prime}} \nc{\svp}{\vspace{2cm}}
\nc{\vp}{\vspace{8cm}}
\nc{\modg}[1]{\!<\!\!{#1}\!\!>}
\nc{\intg}[1]{F_C(#1)}
\nc{\lmodg}{\!<\!\!}
\nc{\rmodg}{\!\!>\!}
\nc{\cpi}{\widehat{\Pi}}
\nc{\sha}{{\mbox{\cyr X}}}  
\nc{\ssha}{{\mbox{\cyrs X}}} 
\nc{\tsha}{{\mbox{\cyrt X}}}
\nc{\shai}{{\stackrel{\ra}{\sha}}}
\nc{\shpr}{\diamond}    
\nc{\labs}{\mid\!}
\nc{\rabs}{\!\mid}

\nc{\dr}{\frakR}
\nc{\cdr}{\frakC\frakR}
\nc{\er}{\frakW}
\nc{\cer}{\frakC\frakW}
\nc{\DR}{\overline{\frakR}}
\nc{\CDR}{\overline{\frakC\frakR}}
\nc{\ER}{\overline{\frakW}}
\nc{\CER}{\overline{\frakC\frakW}}

\font\cyr=wncyr10
\font\cyrs=wncyr7
\font\cyrt=wncyr5


\nc{\ann}{\mrm{ann}}
\nc{\Aut}{\mrm{Aut}}
\nc{\can}{\mrm{can}}
\nc{\Cont}{\mrm{Cont}}
\nc{\rchar}{\mrm{char}}
\nc{\cok}{\mrm{coker}}
\nc{\dtf}{{R-{\rm tf}}}
\nc{\dtor}{{R-{\rm tor}}}

\nc{\Div}{{\mrm Div}}
\nc{\End}{\mrm{End}}
\nc{\Ext}{\mrm{Ext}}
\nc{\Fil}{\mrm{Fil}}
\nc{\Fr}{\mrm{Fr}}
\nc{\Frob}{\mrm{Frob}}
\nc{\Gal}{\mrm{Gal}}
\nc{\GL}{\mrm{GL}}
\nc{\Hom}{\mrm{Hom}}
\nc{\hsr}{\mrm{H}}
\nc{\hpol}{\mrm{HP}}
\nc{\id}{\mrm{id}}
\nc{\im}{\mrm{im}}
\nc{\incl}{\mrm{incl}}
\nc{\length}{\mrm{length}}
\nc{\mforall}{\quad \text{for all }}
\nc{\mchar}{\rm char}
\nc{\mpart}{\mrm{part}}
\nc{\ql}{{\QQ_\ell}}
\nc{\qp}{{\QQ_p}}
\nc{\rank}{\mrm{rank}}
\nc{\rcot}{\mrm{cot}}
\nc{\rdef}{\mrm{def}}
\nc{\rdiv}{{\rm div}}
\nc{\rtf}{{\rm tf}}
\nc{\rtor}{{\rm tor}}
\nc{\res}{\mrm{res}}
\nc{\SL}{\mrm{SL}}
\nc{\Spec}{\mrm{Spec}}
\nc{\tor}{\mrm{tor}}
\nc{\Tr}{\mrm{Tr}}
\nc{\tr}{\mrm{tr}}


\nc{\bfk}{{\bf k}}
\nc{\bfone}{{\bf 1}}
\nc{\bfzero}{{\bf 0}}
\nc{\Diff}{\mathbf{Diff}}
\nc{\FMod}{\mathbf{FMod}}
\nc{\Int}{\mathbf{Int}}
\nc{\Mon}{\mathbf{Mon}}
\nc{\remarks}{\noindent{\bf Remarks: }}
\nc{\Rep}{\mathbf{Rep}}
\nc{\Rings}{\mathbf{Rings}}
\nc{\Sets}{\mathbf{Sets}}

\nc{\BA}{{\mathbb A}}   \nc{\CC}{{\mathbb C}}
\nc{\DD}{{\mathbb D}}   \nc{\EE}{{\mathbb E}}
\nc{\FF}{{\mathbb F}}   \nc{\GG}{{\mathbb G}}
\nc{\HH}{{\mathbb H}}   \nc{\LL}{{\mathbb L}}
\nc{\NN}{{\mathbb N}}   \nc{\PP}{{\mathbb P}}
\nc{\QQ}{{\mathbb Q}}   \nc{\RR}{{\mathbb R}}
\nc{\TT}{{\mathbb T}}   \nc{\VV}{{\mathbb V}}
\nc{\ZZ}{{\mathbb Z}}   \nc{\TP}{\widetilde{P}}


\nc{\cala}{{\mathcal A}}    \nc{\calc}{{\mathcal C}}
\nc{\cald}{\mathcal{D}}     \nc{\cale}{{\mathcal E}}
\nc{\calf}{{\mathcal F}}    \nc{\calg}{{\mathcal G}}
\nc{\calh}{{\mathcal H}}    \nc{\cali}{{\mathcal I}}
\nc{\call}{{\mathcal L}}    \nc{\calm}{{\mathcal M}}
\nc{\caln}{{\mathcal N}}    \nc{\calo}{{\mathcal O}}
\nc{\calp}{{\mathcal P}}    \nc{\calr}{{\mathcal R}}
\nc{\cals}{{\mathcal S}}    \nc{\calt}{{\mathcal T}}
\nc{\calw}{{\mathcal W}}    \nc{\calx}{{\mathcal X}}
\nc{\CA}{\mathcal{A}}

\nc{\fraka}{{\mathfrak a}}
\nc{\frakb}{{\mathfrak b}}
\nc{\frakc}{{\mathfrak c}}
\nc{\frakd}{{\mathfrak d}}
\nc{\frake}{{\mathfrak e}}
\nc{\fraki}{{\mathfrak i}}
\nc{\frakj}{{\mathfrak j}}
\nc{\frakk}{{\mathfrak k}}
\nc{\frakB}{{\frak B}}
\nc{\frakC}{{\frak C}}
\nc{\frakE}{{\frak E}}
\nc{\frakF}{{\frak F}}
\nc{\frakm}{{\frak m}}
\nc{\frakM}{{\frak M}}
\nc{\frakp}{{\frak p}}
\nc{\frakR}{{\frak R}}
\nc{\frakS}{{\frak S}}
\nc{\frakA}{{\frak A}}
\nc{\frakW}{{\frak W}}
\nc{\fraku}{{\frak u}}
\nc{\frakv}{{\frak v}}
\nc{\frakw}{{\frak w}}
\nc{\frakx}{{\frak x}}
\nc{\fraky}{{\frak y}}

\newcommand{\ctone}[1]
{\begin{picture}(10,5)(-2,-1)
\put(0,0){\circle*{3}}
\put(3,-2){\tiny $#1$}
\end{picture}}

\newcommand{\cttwo}[3]{\begin{picture}(20,20)(0,-1)
\thicklines
\put(3,0){\circle*{3}}
\put(3,0){\line(0,1){17}}
\put(3,17){\circle*{3}}
\put(6,-3){\tiny $#1$}
\put(6,18){\tiny $#2$}
\put(6,8){\tiny $#3$}
\end{picture}}

\newcommand{\ctthree}[5]{
\begin{picture}(30,20)(-5,-1)
\thicklines
\put(3,0){\circle*{3}}
\put(3,0){\line(1,2){8}}
\put(11,16){\circle*{3}}
\put(-5.5,16){\circle*{3}}
\put(3,0){\line(-1,2){8}}
\put(6,-4){\tiny $#1$}
\put(-12,16){\tiny $#2$}
\put(14,16){\tiny $#3$}
\put(-8,7){\tiny $#4$}
\put(10,7){\tiny $#5$}
\end{picture}}

\newcommand{\ctfour}[7]{
\begin{picture}(30,20)(-5,-1)
\thicklines
\put(3,0){\circle*{3}}
\put(13,16){\circle*{3}}
\put(3,16){\circle*{3}}
\put(-7,16){\circle*{3}}
\put(3,0){\line(0,1){16}}
\put(3,0){\line(2,3){10}}
\put(3,0){\line(-2,3){10}}
\put(5,-4){\tiny $#1$}
\put(-14,14){\tiny $#2$}
\put(3,20){\tiny $#3$}
\put(17,16){\tiny $#4$}
\put(-10,7){\tiny $#5$}
\put(4,8){\tiny $#6$}
\put(12,7){\tiny $#7$}
\end{picture}}
\newcommand{\ctfourb}[7]{\begin{picture}(40,20)(-5,-1)
\thicklines
\put(3,0){\circle*{3}}
\put(3,0){\line(1,2){6}}
\put(9,12){\circle*{3}}
\put(-3,12){\circle*{3}}
\put(3,0){\line(-1,2){6}}
\put(9,24){\circle*{3}}
\put(9,12){\line(0,1){12}}
\put(2,-7){\tiny $#1$}
\put(-5,2){\tiny $#5$}
\put(-10,8){\tiny $#2$}
\put(8,2){\tiny $#6$}
\put(13,12){\tiny $#3$}
\put(4,16){\tiny $#7$}
\put(13,24){\tiny $#4$}
\end{picture}}

\newcommand{\ctfourc}[7]{
\begin{picture}(30,20)(-5,-1)
\thicklines
\put(3,0){\circle*{3}}
\put(11,24){\circle*{3}}
\put(3,12){\circle*{3}}
\put(-5,24){\circle*{3}}
\put(3,0){\line(0,1){11}}
\put(3,12){\line(2,3){7}}
\put(3,12){\line(-2,3){7}}
\put(5,-4){\tiny $#1$}
\put(1,17){\tiny $#2$}
\put(-12,22){\tiny $#3$}
\put(14,22){\tiny $#4$}
\put(-5,4){\tiny $#5$}
\put(-8,13){\tiny $#6$}
\put(8,13){\tiny $#7$}
\end{picture}}

\newcommand{\ctfive}[9]{\begin{picture}(40,40)(-5,-1)
\thicklines
\put(3,0){\circle*{3}}
\put(3,0){\line(1,2){6}}
\put(9,12){\circle*{3}}
\put(-3,12){\circle*{3}}
\put(3,0){\line(-1,2){6}}
\put(3,24){\circle*{3}}
\put(9,12){\line(1,2){6}}
\put(9,12){\line(-1,2){6}}
\put(15,24){\circle*{3}}
\put(2,-8){\tiny $#1$}
\put(-7,3){\tiny $#6$}
\put(-9,13){\tiny $#2$}
\put(7,1){\tiny $#7$}
\put(12,8){\tiny $#3$}
\put(0,14){\tiny $#8$}
\put(-4,22){\tiny $#4$}
\put(15,14){\tiny $#9$}
\put(20,24){\tiny $#5$}
\end{picture}}

\nc{\ynr}[1]{\textcolor{blue}{\underline{Yunnan:}#1 }}

\nc{\lir}[1]{\textcolor{red}{\underline{Li:}#1 }}

\nc{\rgr}[1]{\textcolor{orange}{\underline{Richard:}#1 }}

\begin{document}

\title[Algebraic study of integral equations]{An algebraic study of Volterra integral equations and their operator linearity}

\author{Li Guo}
\address{Department of Mathematics and Computer Science, Rutgers University, Newark, NJ 07102, United States}
\email{liguo@rutgers.edu}

\author{Richard Gustavson}
\address{Department of Mathematics, Manhattan College, Riverdale, NY 10471, United States}
\email{rgustavson01@manhattan.edu}

\author{Yunnan Li}
\address{School of Mathematics and Information Science, Guangzhou University,
Guangzhou 510006, China}
\email{ynli@gzhu.edu.cn}

\date{\today}

\begin{abstract}
The algebraic study of special integral operators led to the notions of Rota-Baxter operators and shuffle products which have found broad applications. This paper carries out an algebraic study of general integral operators and equations, and shows that there are rich algebraic structures underlying Volterra integral operators and the corresponding equations. First Volterra integral operators are shown to produce a matching \tw Rota-Baxter algebra satisfying twisted integration-by-parts operator identities. In order to provide a universal space to express general integral equations, free operated algebras are then constructed in terms of bracketed words and rooted trees with decorations on the vertices and edges.
Further explicit constructions of the free objects in the category of matching \tw Rota-Baxter algebras are obtained by a twisted and decorated generalization of the shuffle product, providing a universal space for separable Volterra equations. As an application of these algebraic constructions, it is shown that any integral equation with separable Volterra kernels is operator linear in the sense that the equation can be simplified to a linear combination of iterated integrals.
\end{abstract}

\subjclass[2010]{
16W99,  
12H05,	
45N05,  
17B38,	
45D05,  
45P05,   
16S10,  
05C05   
}

\keywords{Integral equation, iterated integral, Volterra operator, Volterra equation, Rota-Baxter algebra, differential algebra, operated algebra, rooted trees, linearity of integral equation}

\maketitle

\tableofcontents

\allowdisplaybreaks

\section{Introduction}

This paper sets up an algebraic framework for integral equations by operated algebras and establishes the operator linearity of separable Volterra integral equations by applying matching \tw Rota-Baxter algebras.
More precisely, this paper provides
\begin{enumerate}
	\item a precise framework to discuss integral equations algebraically in terms of bracketed words and decorated rooted trees. Indeed, the setup applies to equations of any linear operators. This leads to a notion of integral polynomials and hence, by setting the polynomials to zero, integral equations;
	\item an effective framework for integral equations with separable Volterra integral operators, through uncovering algebraic relations of such operators. As an application, we show that any separable integral equation can be expressed in terms of iterated integrals.
\end{enumerate}

\subsection{Background of integral operators and integral equations}
\subsubsection{Analytic aspect}
In general terms, an integral equation is an equation involving integral operators of unknown functions.
The study of integral equations was begun by Fredholm~\mcite{Fr} and Volterra~\mcite{Vo1}, and the two main branches of equations within the discipline, where both limits of integration are fixed and where one or both limits are allowed to vary, bear their respective names. In particular, Volterra equations are defined by the Volterra integral operators
\begin{equation} \mlabel{eq:volop}
P_K(f)(x):=P_{K,a}(f)(x) := \int_a^x K(x,t)f(t)\,dt,
\end{equation}
with kernels $K(x,t)$.

Integral equations and operators have seen many applications since their inception. Hilbert~\mcite{Hi} used integral equations to begin the study of functional analysis.  Integral operators appear when solving boundary value problems through the use of Green's functions, which are the kernels of such operators~\mcite{SH}. See~\mcite{Wa,Ze} for the general theory of integral equations.

Originated from the work of K.-T. Chen, iterated integrals are important in topology and (differential and algebraic) geometry~\mcite{Ch,Ch2,Ha}, and in number theory~\mcite{IKZ} where they are also called Drinfeld integrals or Kontsevich integrals, as well as in quantum field theory~\mcite{Br}. In general, it is a fundamental problem to determine when an integral expression or an integral equation can be expressed in terms of iterated integrals.

\subsubsection{Algebraic aspect}
The study of integral equations from an algebraic perspective was more recent.  One approach has been through the lens of Rota-Baxter algebras, in which a linear operator $P$ must satisfy the Rota-Baxter identity of weight $\lambda$ for some fixed scalar $\lambda$
\begin{equation}
P(f)P(g) = P(fP(g)) + P(P(f)g) + \lambda P(fg).
\mlabel{eq:rbo}
\end{equation}
When $\lambda = 0$, this can be taken as an abstraction of the integration by parts identity for the usual single-variable integral by letting $P(f)(x) = \int_a^x f(t)\,dt$; letting $\lambda$ vary allows Rota-Baxter algebras to be applied in many areas outside of integral equations.  Indeed Rota-Baxter algebras were first used by G. Baxter in fluctuation theory~\mcite{Ba}, and by G.-C. Rota in combinatorics~\mcite{Ro}.  The algebraic structure for the space of integral operators $\int_a^x f(t)\,dt$ using shuffle products was developed from this perspective and is closely related to the aforementioned iterated integrals started by K.-T. Chen~\mcite{Gub}.  From another perspective, integro-differential algebras were developed to algebraically study boundary problems for linear ordinary differential equations~\mcite{GGR, GRR, RR}.

Other algebraic identities are known to hold for special integral operators. One distinguished example is the Reynolds operator identity
\begin{equation}
R(f)R(g) = R(fR(g)) + R(R(f)g) - R(R(f)R(g)) \quad \text{for all } f,g \in A,
\mlabel{eq:rey2}
\end{equation}
satisfied by the Volterra integral operator $P(f)(x):=\int_a^x e^{x-t}\,f(t)\,dt$ which served as one of the first examples of the Reynolds operator originated from the famous work of O. Reynolds on turbulence theory in fluid mechanics~\mcite{Rey} and was interested to R.~Birkhoff and G.-C.~Rota~\mcite{Bi,RoR,Ro1}.

A recent example is the matching Rota-Baxter operators~\mcite{ZGG}, consisting of a family of linear operators $P_\omega, \omega\in \Omega,$ for a parameter set $\Omega$ satisfying the identities
\begin{equation}
\mlabel{eq:mrbo}
P_\alpha(f)P_\beta(g) = P_\alpha(fP_\beta(g)) + P_\beta(P_\alpha(f)g) + \lambda_\beta P_\alpha(fg) \quad \text{for all } f,g \in A,\, \alpha,\beta \in \Omega.
\end{equation}
They are satisfied by a family of integral operators $P_\omega(f)(x):=\int_a^xh_\omega(t)f(t)\,dt$ for a parametric family of functions $h_\omega(t)$.

Despite all these developments, the overall understanding is still very limited on the algebraic aspect of integral operators and integral equations, in comparison with their differential counterparts which, through the pioneering work of Ritt and Kolchin, and contributions of many authors over the recent decades, has evolved into a vast field of theory and applications~\mcite{Ko,PS,Ri}.  Specifically, the algebraic definition of a differential operator is long established as the Leibniz rule or product formula.  Additionally, the meaning of a differential equation from an algebraic perspective is also well established as the ring of differential polynomials on a set of ``differential" variables, namely the polynomials on the given variables and their differential derivations.

\subsection{Integral polynomials and integral equations}
The purpose of this paper is to give precise meanings to the notions of integral equations and integral algebras, serving as the foundation of an algebraic study and symbolic computations of integral equations, analogous to their differential counterparts.

In Section~\mref{sec:volt}, we give an overview of the analytic Volterra operators and equations and show they have the algebraic property that, if the kernel is {\bf separable}, i.e. is a product of single-variable functions, then the algebra of continuous functions equipped with a family $\{P_\omega\,|\,\omega\in \Omega\}$ of separable Volterra operators is a matching Rota-Baxter algebra twisted by invertible elements $\mfraka_\omega$ (Theorem~\mref{thm:twfunction}),
that is, the Volterra integral operators $P_\omega, \omega\in \Omega$, satisfy identities of the form
\[P_\alpha(f)P_\beta(g) = \mfraka_\alpha P_\beta\left(\mfraka_\alpha^{-1}P_\alpha(f)g\right)+\mfraka_\beta P_\alpha\left(\mfraka_\beta^{-1}fP_\beta(g)\right)
\quad \text{for all} f, g\in C(I), \alpha,\beta\in \Omega.\]
Their relationship with the Reynolds operator is briefly discussed.

In Section~\mref{sec:gen} we develop an algebraic structure for defining arbitrary integral equations.  An integral equation should be an element of a suitably defined ``integral polynomial algebra," namely the free commutative ``integral" algebra on a set $Y$ of unknown functions, analogous to the polynomial algebra $\bfk [Y]$ or the differential polynomial algebra $A\{Y\}$.  We aim to find a suitable notion of integral algebra which, in contrast to differential algebras, can have different definitions depending on the nature of the integral operators and might not have any algebraic characterizations. To be versatile, we begin by imposing no conditions on the integral operators except their linearity, in which case, such an algebra is called an operated algebra, defined in~\cite{Gop} but can be traced back to A.G. Kurosh~\mcite{Ku} who called it an $\Omega$-algebra.
The construction of free (noncommutative) operated algebras generated by sets was obtained in~\cite{Gop} in terms of bracketed words, Motzkin paths and rooted trees.

In order to provide a general algebraic framework for operator equations and, in particular, integral equations, we adapt to the commutative case the construction of free operated algebras by bracketed words (Theorem~\mref{thm:freeaab}) and rooted trees (Theorem~\mref{thm:freeobtr}), the latter analogous to the noncommutative case but utilizing typed decorated trees, that is, rooted trees whose vertices and edges are both decorated, arising from the recent study of renormalization~\mcite{BHZ,Foi}.  To handle integral equations with variable coefficients, we consider free commutative operator algebras on sets with coefficients in a preassigned operated algebra, in particular in a ``coefficient algebra" of functions equipped with integral operators. This free object, called the {\bf free relative $\Omega$-operated $\bfk$-algebra} over an algebra $C$, provides the formal and rigorous context for operator equations, and can also be described using bracketed words (Theorem~\mref{thm:freea}) or rooted trees (Corollary~\mref{co:freet}).  Then any integral equation, regarded as a special case of an operator equation, comes from an element of the thus-defined free commutative operated algebra with coefficients in an algebra with integral operators.

When one or a family of linear operators satisfies certain operator identities, an element in the corresponding operator polynomial algebra can descend to a smaller algebra with more concise description, similar to the usual differential polynomials.
Indeed, setting $C = A[Y]$ for some function space $A$ and $\Omega$ a set of integral operators, the free relative $\Omega$-operated algebra is thus the {\bf integral polynomial algebra} with integral operator set $\Omega$, coefficient function space $A$, and variable functions $Y$.  This allows us to not only rigorously define solutions to algebraic integral equations, but also describe when two such equations are equivalent to each other; see Definition~\mref{de:eqn}(\mref{it:eqny}) and Proposition~\mref{pp:reduce}.

In Section~\mref{sec:sep} we apply this abstract approach to the concrete setting of Volterra integral operators as in Eq.~\meqref{eq:volop}, in particular separable Volterra integral operators, for which the kernel $K(x,t)$ is separable.  Applying the \tw Rota-Baxter relation satisfied by the Volterra operators allows us to obtain another construction of the free objects (Theorem~\mref{th:rfcwr}). As an application, we show in Theorem~\mref{thm:oplin} that any integral equation with separable kernels can be equivalently expressed in terms of iterated integrals. Examples are given throughout to illustrate how our algebraic results apply to specific Volterra equations.

\smallskip

\noindent
{\bf Notations.} In this paper, we fix a ground field $\bfk$ of characteristic 0. All the structures
under discussion, including vector spaces, algebras and tensor products, are taken over $\bfk$ unless otherwise specified. Likewise, all algebras are assumed to be commutative unitary $\bfk$-algebras.

\section{Analytic and algebraic operators for integral equations}
\mlabel{sec:volt}
In this section we give a first discussion of the relationship between analytically defined operators from integrals and algebraically defined operators by algebraic operator identities, before their general study in later sections. We show that a Volterra integral operator is a Rota-Baxter operator for only phantom kernels. For separable kernels, it can be realized by a more general operator that we introduce, called \tw Rota-Baxter operators.

\begin{defn}
\mlabel{de:volop}
Let $I\subseteq \RR$ be an open interval and let $C(I)$ be the algebra of continuous functions on $I$.
\begin{enumerate}
\item
A {\bf Volterra operator} is a linear operator
\[
P_K:=P_{K,\,a}:C(I) \to C(I), P_K(f)(x) := \int_a^x K(x,t)f(t)\,dt,
\]
for some $a\in I$ and {\bf kernel} $K(x,t) \in C(I^2)$;
\item
A kernel $K$ and the corresponding Volterra operator are called {\bf separable} if it can be decomposed as $K(x,t) = k(x)h(t)$ for some functions $k$ and $h$ in $C(I)$;
\item
A kernel $K$ and the corresponding Volterra operator are called {\bf \shadow} if it is a function of only the variable of integration (i.e. the ``dummy" variable). It is the special case of a separable kernel when $k(x)$ is a constant;
\item
An integral equation is called {\bf separable} (resp. {\bf \shadow}) if all the integral operators in question are separable (resp. \shadow) and share the same lower limit.
\end{enumerate}
\end{defn}

By imposing suitable convergent conditions on $C(I)$ at the endpoints of $I$, we can also take $a$ to be one of the endpoints of $I$, allowing us to discuss the Thomas-Fermi equation below.

\begin{exam} \mlabel{ex:classic}
\begin{enumerate}
\item \mlabel{it:volt}
Volterra's population model for a species in a closed system~\mcite{Wa2} describes the population $u(t)$ of a species when exposed to both crowding and toxicity; it can be written as an integral equation of the form
\begin{equation}
u(t) = u_0 + a\int_0^t u(x)\,dx - b\int_0^t u(x)^2\,dx - c\int_0^t u(x)\int_0^x u(y)\,dy\,dx,
\mlabel{eq:volt}
\end{equation}
where $a,b$, and $c$ are the birth rate, crowding coefficient, and toxicity coefficient, respectively, and $u_0$ is the initial population.  Here there is one integral operator
\begin{equation} \mlabel{eq:pop}
P:f(t) \mapsto \int_0^t f(x)\,dx.
\end{equation}
\item
\mlabel{it:tf}
The Thomas-Fermi equation~\mcite{Wa1} describes the potential $y(x)$ of an atom in terms of the radius $x$, and can be written as an integral equation of the form
\begin{equation}
y(x) = 1 + Bx + \int_0^x\int_0^t s^{-1/2}y(s)^{3/2}\,ds\,dt,
\mlabel{eq:tf}
\end{equation}
where $B$ is a known parameter.  In this setting, there are two integral operators
\begin{equation}\mlabel{eq:pot}
P_1:f(x) \mapsto \int_0^x f(t)\,dt, \quad P_2:f(x) \mapsto \int_0^x t^{-1/2}f(t)\,dt.
\end{equation}
\end{enumerate}
\end{exam}
Considering the importance of linearity of algebraic and differential equations, we introduce an analogous notion for integral equations in terms of iterated integrals~\mcite{Br,Ch,Ch2}.

\begin{defn} \mlabel{de:oplin}
An integral equation is called {\bf operator linear} if any monomial in the equation is an iterated integral, that is, it does not contain any products of integral operators.
\end{defn}

Note the difference between an equation being {\em operator} linear and linear in the usual sense, namely the unknown function only appears once to the first power in every term of the equation. For example, Eq.~\meqref{eq:volt} is not linear since it contains $u^2$, but it is \oplin since there is no product of integral operators. The two integrals in the last term are iterated, not multiplied. Similarly, a term $yP(y)$ is operator linear, but not linear.
To the contrary, the equation
\begin{equation}
 f(x)+ \Big(\int_0^x (\sin t) (f(t)^3-g(t))\,dt\Big)\Big(\int_0^x \exp(t) g(t)^2\,dt\Big)=0
\mlabel{eq:integ2}
\end{equation}
is not \oplin (and nonlinear) because the product of the Volterra operators.

As an application of our algebraic study, we will show that integral equations with separable kernels can be reduced to one that is \oplin (Theorem~\mref{thm:oplin}).

We next recall the algebraic notions arising from Volterra operators. They will be supplemented by other notions (Definitions~\mref{de:trba} and \mref{de:opalg}) as the algebraic structures to study integral equations in this paper.

\begin{defn}
\begin{enumerate}
\item \mlabel{it:rb}
For any $\lambda \in \bfk$, a {\bf Rota-Baxter algebra of weight} $\lambda$ is a $\bfk$-algebra $R$ together with a linear operator $P:R \to R$ satisfying
\[
P(f)P(g) = P(fP(g)) + P(P(f)g) + \lambda P(fg) \quad \text{for all } f,g \in R.
\]
\item \mcite{GGZy,ZGG} Fix a non-empty set $\Omega$ and let $\lambda_\Omega :=(\lambda_\omega \,|\, \omega \in \Omega) \subseteq \bfk$.  An {\bf $\Omega$-matching Rota-Baxter algebra of weight} $\lambda_\Omega$, or simply an {\bf $\Omega$-MRBA} is a $\bfk$-algebra $R$ together with a family of linear operators $P_\Omega := (P_\omega \,|\, \omega \in \Omega)$, where $P_\omega:R \to R$ for all $\omega \in \Omega$, satisfying
\begin{equation}\mlabel{eq:mrb}
P_\alpha(f)P_\beta(g) = P_\alpha(fP_\beta(g)) + P_\beta(P_\alpha(f)g) + \lambda_\beta P_\alpha(fg) \quad \text{for all } f,g \in R,\, \alpha,\beta \in \Omega.
\end{equation}
If $\lambda_\Omega = (\lambda)$, then we say that the MRBA has weight $\lambda$.
\mlabel{it:mrb}
\end{enumerate}
\mlabel{de:rbr}
\end{defn}
Note that any Rota-Baxter algebra is also a MRBA by letting $\Omega$ contain a single element.

We first consider the simple case $K(x,t) = K(t)$, that is, $K$ is a \shadow kernel.  In this case $P_K$ is essentially the classical integral operator $I(f)(x):=\int_a^x f(t)dt$ and the Rota-Baxter identity of weight zero holds:
\begin{equation}
I(f)I(g)=I(fI(g))+I(I(f)g) \tforall f, g\in C(I).
\mlabel{eq:rbi}
\end{equation}
In fact, as $K\in C(I)$ varies, the family of operators $P_K$ form a MRBA in Definition~\mref{de:rbr}.(\mref{it:mrb}). More precisely, the following result holds.

\begin{prop}
	For any $K,H \in C(I)$, $P_K$ and $P_H$ satisfy the matching Rota-Baxter identity
	\[P_K(f)P_H (g)=P_K(fP_H(g)) + P_H(P_K(f)g) \tforall f, g\in C(I),\]
	making $(C(I),(P_K)_{K \in C(I)})$ a MRBA of weight $0$.
	\mlabel{th:mrb}
\end{prop}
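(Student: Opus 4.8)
The plan is to verify the identity pointwise in $x\in I$ by an elementary calculus argument, in two interchangeable ways. First I would record the setup: since $K$ is a \shadow kernel, $P_K(f)(x)=\int_a^x K(t)f(t)\,dt$, and because $K,f\in C(I)$ the integrand $Kf$ is continuous, so $P_K(f)$ is a well-defined (indeed continuously differentiable) element of $C(I)$ with $P_K(f)(a)=0$; the same applies to $P_H$. Hence both sides of the claimed equation are $C^1$ functions of $x$ that vanish at $x=a$, and it suffices to show their derivatives agree on $I$.

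For the left-hand side, the product rule together with the fundamental theorem of calculus gives
\[\frac{d}{dx}\bigl(P_K(f)(x)\,P_H(g)(x)\bigr)=K(x)f(x)\,P_H(g)(x)+P_K(f)(x)\,H(x)g(x).\]
For the right-hand side, writing $P_K(fP_H(g))(x)=\int_a^x K(t)f(t)P_H(g)(t)\,dt$ and $P_H(P_K(f)g)(x)=\int_a^x H(t)P_K(f)(t)g(t)\,dt$ and differentiating again by the fundamental theorem of calculus yields
\[\frac{d}{dx}\bigl(P_K(fP_H(g))(x)+P_H(P_K(f)g)(x)\bigr)=K(x)f(x)P_H(g)(x)+H(x)P_K(f)(x)g(x),\]
which is the same expression since $C(I)$ is commutative. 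Comparing with the vanishing at $x=a$ gives the identity. As an alternative — and this is the computation that will generalize to the twisted and separable cases treated later — one expands the left-hand side as a double integral $\iint_{[a,x]^2}K(t)f(t)H(s)g(s)\,dt\,ds$, splits the square along its diagonal (a null set) into $\{a\le t\le s\le x\}$ and $\{a\le s\le t\le x\}$, and applies Fubini: the first region integrates to $\int_a^x\bigl(\int_a^s K(t)f(t)\,dt\bigr)H(s)g(s)\,ds=P_H(P_K(f)g)(x)$ and the second to $\int_a^x K(t)f(t)\bigl(\int_a^t H(s)g(s)\,ds\bigr)\,dt=P_K(fP_H(g))(x)$.

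Finally, since the identity holds for every pair $K,H\in C(I)$ with no weight term, it is precisely Eq.~\meqref{eq:mrb} for the index set $\Omega=C(I)$ with all weights equal to $0$, so $(C(I),(P_K)_{K\in C(I)})$ is a MRBA of weight $0$; specializing $K=H$ recovers Eq.~\meqref{eq:rbi}. I do not anticipate a genuine obstacle here: the only points needing a word of care are the regularity justification for differentiating the integrals (or for applying Fubini), which is supplied by continuity of all the integrands, and the remark that the diagonal of the square contributes nothing to the double integral.
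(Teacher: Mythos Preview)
Your argument is correct: both the differentiation approach and the Fubini splitting of the square establish the identity, and your remarks about regularity and the diagonal are adequate.

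The paper, however, takes a shorter structural route. It observes that a \shadow Volterra operator factors through the ordinary integral as $P_K(f)=I(Kf)$ with $I(f)(x)=\int_a^x f(t)\,dt$, and then simply invokes the already-recorded Rota-Baxter identity~\meqref{eq:rbi} for $I$:
\[
P_K(f)P_H(g)=I(Kf)I(Hg)=I\bigl(Kf\,I(Hg)\bigr)+I\bigl(I(Kf)\,Hg\bigr)=P_K(fP_H(g))+P_H(P_K(f)g).
\]
This packaging makes clear that the matching identity for the whole family $(P_K)_{K\in C(I)}$ is nothing more than the single Rota-Baxter identity for $I$ together with the absorption $K\mapsto Kf$. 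Your direct computation, by contrast, is self-contained (it does not presuppose~\meqref{eq:rbi}) and, as you note, the Fubini version is the calculation that later adapts to the separable and twisted setting of Theorem~\mref{thm:twfunction}; the paper's reduction trick does not extend there because a non-\shadow $P_K$ no longer has the form $I(Kf)$.
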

\begin{proof}
	We note that $P_K(f)=I(Kf)$ and $P_H(g)=I(Hg)$ for the integral operator $I(f)(x):=\int_a^x f(t)dt$. Since $I$ is a Rota-Baxter operator of weight zero, we obtain
	$$
	P_K(f)P_H(g)= I(Kf)I(Hg)=I(KfI(Hg))+I(I(Kf)Hg)
	=P_K(fP_H(g))+P_H(P_K(f)g),
	$$
	as needed.
\end{proof}

We now consider the case when the kernel $K(x,t)$ is indeed a function of $x$ in addition to $t$.
Here, the Volterra integral operator $P_K$ is no longer a Rota-Baxter operator. As a simple example,
	let $K(x,t)=x$ and $f=g=1$. Then the linear operator $P_K$ on $C(\RR)$ with $a=0$ gives
	$P_K(f)(x)\,P_K(g)(x)=x^4$ which does not agree with
	$P_K(fP_K(g))(x)+P_K(P_K(f)g)(x)=
	\frac{2}{3}x^4.$

In general, a Volterra operator with separable kernel $K(x,t) = k(x)h(t)$ is a Rota-Baxter operator only in very specific circumstances, as shown in Corollary~\mref{co:septrb}.
Nevertheless, the operator can satisfy a twisted Rota-Baxter identity.

\begin{defn} \mlabel{de:trba}
\begin{enumerate}
\item
	For any $\lambda \in \bfk$, an   algebra $R$ with a linear operator $P$ and a specific invertible element $\mfraka\in R$ is called a {\bf \tw Rota-Baxter algebra of weight $\lambda$ with twist $\mfraka$} if
	\begin{equation}\mlabel{eq:trb}
	P(x)P(y)=\mfraka P\left(\mfraka^{-1}P(x)y\right)+\mfraka P\left(\mfraka^{-1}xP(y)\right)+\lambda \mfraka P\left(\mfraka^{-1}xy\right) \quad \tforall x, y\in R.
	\end{equation}
\item Fix a non-empty set $\Omega$ and let $\lambda_\Omega :=(\lambda_\omega \,|\, \omega \in \Omega) \subseteq \bfk$. An {\bf $\Omega$-matching twisted Rota-Baxter algebra of weight $\lambda_\Omega$ with twist $\mfraka_\Omega$}, or simply an {\bf $\Omega$-\smtrba}, is a $\bfk$-algebra $R$ together with a family of linear operators $P_\Omega := (P_\omega \,|\, \omega \in \Omega)$, where $P_\omega:R \to R$ for all $\omega \in \Omega$, and a parametric family $\mfraka_\Omega :=(\mfraka_\omega \,|\, \omega \in \Omega)$ of invertible elements of $R$ satisfying
	\begin{equation}\mlabel{eq:mtrb}
	P_\alpha(x)P_\beta(y)=\mfraka_\alpha P_\beta\left(\mfraka_\alpha^{-1}P_\alpha(x)y\right)+\mfraka_\beta P_\alpha\left(\mfraka_\beta^{-1}xP_\beta(y)\right)+ \lambda_\beta\mfraka_\beta P_\alpha\left(\mfraka_\beta^{-1}xy\right)
	\end{equation}
for all $x, y\in R,\alpha,\beta\in\Omega$. If $\mfraka_\Omega = (\mfraka)$ (resp. $\lambda_\Omega = (\lambda)$), then we say that the \mtrba has twist $\mfraka$ (resp. weight $\lambda$).
\end{enumerate}

	For $\Omega$-\mtrbas $(R,P_\Omega,\mfraka_\Omega)$ and $(R',P'_\Omega,\mfraka'_\Omega)$, an algebra homomorphism $\varphi:R\rar R'$ is called an {\bf $\Omega$-\mtrba homomorphism} if $\varphi(\mfraka_\omega)=\mfraka'_\omega$ and $\varphi P_\omega=P'_\omega \varphi$ for all $\omega\in\Omega$.
\end{defn}

The next result shows that a \mtrba is equivalent to a MRBA.
\begin{prop}
Let $P_\Omega$ be a family of linear operators on an algebra $R$ and let $\mfraka_\Omega\subseteq R$ of invertible elements and $\lambda_\Omega :=(\lambda_\omega \,|\, \omega \in \Omega) \subseteq \bfk$. Then $(R,P_\Omega)$ is an $\Omega$-\mtrba of weight $\lambda_\Omega$ with twist $\mfraka_\Omega$ if and only if $(R,\check{P}_\Omega)$ is an $\Omega$-MRBA of weight $\lambda_\Omega$, where $\check{P}_\omega:=\mfraka_\omega^{-1}P_\omega\mfraka_\omega,\,\omega\in\Omega$.
	\mlabel{pp:twrbo}
\end{prop}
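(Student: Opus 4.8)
The plan is to reduce the matching twisted Rota--Baxter identity \eqref{eq:mtrb} to the ordinary matching Rota--Baxter identity \eqref{eq:mrb} by the invertible ``conjugation'' change of operators, exploiting that $R$ is commutative throughout. The starting observation is that, since each $\mfraka_\omega$ is invertible, the assignment $\check P_\omega = \mfraka_\omega^{-1}P_\omega\mfraka_\omega$ (explicitly, $\check P_\omega(z) = \mfraka_\omega^{-1}P_\omega(\mfraka_\omega z)$) is a bijection on families of linear operators on $R$, with inverse $P_\omega = \mfraka_\omega\check P_\omega\mfraka_\omega^{-1}$, i.e. $P_\omega(z) = \mfraka_\omega \check P_\omega(\mfraka_\omega^{-1}z)$. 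Hence it is enough to fix $\alpha,\beta\in\Omega$ and the corresponding scalar $\lambda_\beta$ and show that \eqref{eq:mtrb} holds for $(P_\alpha,P_\beta,\mfraka_\alpha,\mfraka_\beta)$ and all $x,y\in R$ if and only if \eqref{eq:mrb} holds for $(\check P_\alpha,\check P_\beta,\lambda_\beta)$ and all $f,g\in R$; quantifying over all pairs $(\alpha,\beta)$ then yields the proposition.

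The computation itself: I would begin from the $\Omega$-MRBA identity for $\check P$, namely $\check P_\alpha(f)\check P_\beta(g) = \check P_\alpha(f\check P_\beta(g)) + \check P_\beta(\check P_\alpha(f)g) + \lambda_\beta \check P_\alpha(fg)$, then perform the (bijective, since $\mfraka_\alpha,\mfraka_\beta$ are invertible) substitution $f = \mfraka_\alpha^{-1}x$, $g = \mfraka_\beta^{-1}y$, and expand every occurrence of $\check P_\omega$ using $\check P_\omega(z)=\mfraka_\omega^{-1}P_\omega(\mfraka_\omega z)$. Using commutativity of $R$ to cancel the factors $\mfraka_\omega\mfraka_\omega^{-1}$ that appear inside the operators, the left side becomes $\mfraka_\alpha^{-1}\mfraka_\beta^{-1}P_\alpha(x)P_\beta(y)$, the first right-hand term becomes $\mfraka_\alpha^{-1}P_\alpha(\mfraka_\beta^{-1}xP_\beta(y))$, the second becomes $\mfraka_\beta^{-1}P_\beta(\mfraka_\alpha^{-1}P_\alpha(x)y)$, and the third becomes $\lambda_\beta\mfraka_\alpha^{-1}P_\alpha(\mfraka_\beta^{-1}xy)$; multiplying through by the invertible element $\mfraka_\alpha\mfraka_\beta$ turns the relation into exactly \eqref{eq:mtrb}. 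Since each substitution, cancellation, and multiplication above is reversible, this gives the desired equivalence for the fixed pair $(\alpha,\beta)$.

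There is no genuine obstacle here beyond careful bookkeeping of the twists; the only ingredients are commutativity of $R$ (to reorder and cancel the $\mfraka_\omega^{\pm1}$), invertibility of each $\mfraka_\omega$ (legitimizing and making reversible the substitutions and the final multiplication), and $\bfk$-linearity of each $P_\omega$. The one point I would be careful to emphasize in the write-up is that $\mfraka_\omega^{-1}$ is a ring element, not a scalar, so it cannot be pulled through $P_\omega$ — this is precisely why $\check P_\omega$ is the operator $z\mapsto\mfraka_\omega^{-1}P_\omega(\mfraka_\omega z)$ and why \eqref{eq:mtrb} looks genuinely different from \eqref{eq:mrb} even though the two structures are equivalent.
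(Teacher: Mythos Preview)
Your proposal is correct and is essentially the same argument as the paper's: the paper simply packages your substitution-and-multiply computation into the single identity
\[
P_\alpha(x)P_\beta(y)-\mfraka_\alpha P_\beta(\mfraka_\alpha^{-1}P_\alpha(x)y)-\mfraka_\beta P_\alpha(\mfraka_\beta^{-1}xP_\beta(y))-\lambda_\beta\mfraka_\beta P_\alpha(\mfraka_\beta^{-1}xy)
= \mfraka_\alpha\mfraka_\beta\Big(\check P_\alpha(\mfraka_\alpha^{-1}x)\check P_\beta(\mfraka_\beta^{-1}y)-\cdots\Big),
\]
which is exactly what your steps (substitute $f=\mfraka_\alpha^{-1}x$, $g=\mfraka_\beta^{-1}y$, expand $\check P_\omega$, multiply by $\mfraka_\alpha\mfraka_\beta$) produce. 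Your remarks on reversibility via invertibility of the $\mfraka_\omega$ and on the role of commutativity are precisely the justifications needed for the ``if and only if''.
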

\begin{proof}
	The proposition follows from the equation
	\begin{eqnarray*}
		&& P_\alpha(x)P_\beta(y)-\mfraka_\alpha P_\beta(\mfraka_\alpha^{-1}P_\alpha(x)y)-\mfraka_\beta P_\alpha(\mfraka_\beta^{-1}xP_\beta(y))-\lambda_\beta\mfraka_\beta P_\alpha(\mfraka_\beta^{-1}xy)\\
		&=& \hspace{-.3cm} \mfraka_\alpha\mfraka_\beta\Big(\check{P}_\alpha(\mfraka_\alpha^{-1}x)\check{P}_\beta(\mfraka_\beta^{-1}y)
-\check{P}_\beta\big(\check{P}_\alpha(\mfraka_\alpha^{-1}x)(\mfraka_\beta^{-1}y)\big)
-\check{P}_\alpha\big((\mfraka_\alpha^{-1}x)\check{P}_\beta(\mfraka_\beta^{-1}y)\big)
-\lambda_\beta\check{P}_\alpha((\mfraka_\alpha^{-1}x)(\mfraka_\beta^{-1}y))
\Big)
	\end{eqnarray*}
relating the axioms of the \mtrbas and the \mrbas.
\end{proof}

\noindent
{\bf Convention. } With our application to integral equations in mind, we will only consider the case of weight zero in the rest of the paper. So for an \mtrba, we always mean one with weight zero.

We give more identities for an \mtrba (of weight 0) for later use.
\begin{lemma}\mlabel{lem:trbr}
	Let $(R,P_\Omega,\mfraka_\Omega)$ be an $\Omega$-\mtrba.
	Then for $\check{P}_{\omega,\omega'}:=\mfraka_{\omega'}^{-1}P_\omega\mfraka_{\omega'},\,\omega,\omega'\in\Omega$, we have
	\begin{align}
\mlabel{eq:trbr}
&P_\alpha(x)\check{P}_{\beta,\gamma}(y)-\mfraka_\alpha \check{P}_{\beta,\gamma}(\mfraka_\alpha^{-1}P_\alpha(x)y)-\mfraka_\beta\check{P}_{\alpha,\gamma}(\mfraka_\beta^{-1}x\check{P}_{\beta,\gamma}(y))=0,\\
\mlabel{eq:mtrbr}
&\check{P}_{\alpha,\eta}(x)\check{P}_{\beta,\gamma}(y)-\mfraka_\alpha\mfraka_\eta^{-1}\check{P}_{\beta,\gamma}(\mfraka_\alpha^{-1}\mfraka_\eta\check{P}_{\alpha,\eta}(x)y)
-\mfraka_\beta\mfraka_\gamma^{-1}\check{P}_{\alpha,\eta}(\mfraka_\beta^{-1}\mfraka_\gamma x\check{P}_{\beta,\gamma}(y))=0,
	\end{align}
for any $x, y \in R$ and $\alpha,\beta,\eta,\gamma\in\Omega$.
\end{lemma}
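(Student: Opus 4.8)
The plan is to prove both identities by a direct unfolding computation that reduces everything to the weight-zero matching twisted Rota-Baxter axiom \meqref{eq:mtrb}. The key observation is simply that $\check{P}_{\omega,\omega'}(z)=\mfraka_{\omega'}^{-1}P_\omega(\mfraka_{\omega'}z)$, so every occurrence of a conjugated operator can be rewritten as an ordinary operator $P_\omega$ sandwiched between invertible elements of $R$. Since $R$ is commutative, all such invertible factors move freely across products, and the copies of $\mfraka_\gamma$ (resp.\ $\mfraka_\eta$) that get carried inside a bracket by the axiom are cancelled by the $\mfraka_\gamma^{-1}$ (resp.\ $\mfraka_\eta^{-1}$) that reappears when one re-folds the result back into a conjugated operator.

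Concretely, for \meqref{eq:trbr} I would start from
\[
P_\alpha(x)\check{P}_{\beta,\gamma}(y)=\mfraka_\gamma^{-1}\,P_\alpha(x)P_\beta(\mfraka_\gamma y),
\]
apply \meqref{eq:mtrb} (with $y$ replaced by $\mfraka_\gamma y$) to $P_\alpha(x)P_\beta(\mfraka_\gamma y)$, and then simplify the two resulting summands. The first becomes $\mfraka_\alpha\mfraka_\gamma^{-1}P_\beta(\mfraka_\gamma\mfraka_\alpha^{-1}P_\alpha(x)y)=\mfraka_\alpha\check{P}_{\beta,\gamma}(\mfraka_\alpha^{-1}P_\alpha(x)y)$, while the second, after cancelling the internal $\mfraka_\gamma\mfraka_\gamma^{-1}$, becomes $\mfraka_\beta\mfraka_\gamma^{-1}P_\alpha(\mfraka_\beta^{-1}xP_\beta(\mfraka_\gamma y))=\mfraka_\beta\check{P}_{\alpha,\gamma}(\mfraka_\beta^{-1}x\check{P}_{\beta,\gamma}(y))$, which gives \meqref{eq:trbr} after moving everything to one side.

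The argument for \meqref{eq:mtrbr} is entirely parallel: writing $\check{P}_{\alpha,\eta}(x)\check{P}_{\beta,\gamma}(y)=\mfraka_\eta^{-1}\mfraka_\gamma^{-1}P_\alpha(\mfraka_\eta x)P_\beta(\mfraka_\gamma y)$ and applying \meqref{eq:mtrb} to $P_\alpha(\mfraka_\eta x)P_\beta(\mfraka_\gamma y)$, the two terms repackage (after the same cancellations) into $\mfraka_\alpha\mfraka_\eta^{-1}\check{P}_{\beta,\gamma}(\mfraka_\alpha^{-1}\mfraka_\eta\check{P}_{\alpha,\eta}(x)y)$ and $\mfraka_\beta\mfraka_\gamma^{-1}\check{P}_{\alpha,\eta}(\mfraka_\beta^{-1}\mfraka_\gamma x\check{P}_{\beta,\gamma}(y))$. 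Alternatively, one can route the proof through Proposition~\mref{pp:twrbo}: the family $\check{P}_\Omega=(\check{P}_{\omega,\omega})_{\omega\in\Omega}$ is a weight-zero $\Omega$-MRBA, and since $\check{P}_{\omega,\omega'}=\mfraka_{\omega'}^{-1}\mfraka_\omega\,\check{P}_{\omega,\omega}\bigl(\mfraka_\omega^{-1}\mfraka_{\omega'}(-)\bigr)$ is obtained from $\check{P}_{\omega,\omega}$ by an invertible commuting rescaling, the matching Rota-Baxter relations for $\check{P}_\Omega$ transport to the stated relations for the $\check{P}_{\omega,\omega'}$.

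The proof carries no structural content; the only thing to watch is the bookkeeping of the commuting invertible factors $\mfraka_\omega$ and the internal cancellations, and this is where the single real risk of error lies. In particular no weight-$\lambda$ contribution ever appears, in keeping with the standing convention that all \mtrbas are taken of weight zero.
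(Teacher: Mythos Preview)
Your proposal is correct and follows essentially the same approach as the paper: both proofs unfold $\check{P}_{\omega,\omega'}(z)=\mfraka_{\omega'}^{-1}P_\omega(\mfraka_{\omega'}z)$ and reduce directly to the weight-zero axiom~\meqref{eq:mtrb} applied to $P_\alpha(x)P_\beta(\mfraka_\gamma y)$ (resp.\ $P_\alpha(\mfraka_\eta x)P_\beta(\mfraka_\gamma y)$). The only cosmetic difference is that the paper clears denominators by multiplying through by $\mfraka_\gamma$ (resp.\ $\mfraka_\eta\mfraka_\gamma$) before unfolding, whereas you carry the factor $\mfraka_\gamma^{-1}$ (resp.\ $\mfraka_\eta^{-1}\mfraka_\gamma^{-1}$) outside and refold at the end.
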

\begin{proof}
To prove Eq.~\meqref{eq:trbr}, we multiply $\mfraka_\gamma$ to its left hand side and then use Eq.~\meqref{eq:mtrb} to derive
\begin{align*}
\mfraka_\gamma&P_\alpha(x)\check{P}_{\beta,\gamma}(y)-\mfraka_\gamma\mfraka_\alpha \check{P}_{\beta,\gamma}(\mfraka_\alpha^{-1}P_\alpha(x)y)-\mfraka_\gamma\mfraka_\beta\check{P}_{\alpha,\gamma}(\mfraka_\beta^{-1}x\check{P}_{\beta,\gamma}(y))\\
&=P_\alpha(x)P_\beta(\mfraka_\gamma y)-\mfraka_\alpha P_\beta(\mfraka_\alpha^{-1}P_\alpha(x)(\mfraka_\gamma y))
-\mfraka_\beta P_\alpha(\mfraka_\beta^{-1}x P_\beta(\mfraka_\gamma y))
=0.
\end{align*}

For Eq.~\meqref{eq:mtrbr}, we also use Eq.~\meqref{eq:mtrb} to see that
\begin{align*}
\check{P}_{\alpha,\eta}(x)&\check{P}_{\beta,\gamma}(y)-\mfraka_\alpha\mfraka_\eta^{-1}\check{P}_{\beta,\gamma}(\mfraka_\alpha^{-1}\mfraka_\eta\check{P}_{\alpha,\eta}(x)y)
-\mfraka_\beta\mfraka_\gamma^{-1}\check{P}_{\alpha,\eta}(\mfraka_\beta^{-1}\mfraka_\gamma x\check{P}_{\beta,\gamma}(y))\\
&=\mfraka_\eta^{-1}\mfraka_\gamma^{-1}\left(P_\alpha(\mfraka_\eta x)P_\beta(\mfraka_\gamma y)-\mfraka_\alpha P_\beta(\mfraka_\alpha^{-1}P_\alpha(\mfraka_\eta x)(\mfraka_\gamma y))- \mfraka_\beta P_\alpha(\mfraka_\beta^{-1}(\mfraka_\eta x)P_\beta(\mfraka_\gamma y))\right)=0. \qedhere
\end{align*}
\end{proof}

As the main application of \mtrbas, we have

\begin{theorem}
	Let $I\subseteq \RR$ be an open interval and let $R:=C(I)$ be the algebra of continuous functions on $I$. Let $K_\omega(x,t)=k_\omega(x)h_\omega(t)\in C(I^2),\,\omega\in\Omega$ be a family of separable kernels with all $k_\omega(x)$ free of zeros. Let $a\in I$ and let $P_{K_\omega}(f)(x):=\int_a^xK_\omega(x,t)f(t)\,dt, \,\omega\in\Omega,$ be the corresponding Volterra operators on $R$. Denote $\mfraka_\omega:=\frac{k_\omega(x)}{k_\omega(a)},\,\omega\in\Omega$. Then $(R,P_{K_\Omega},\mfraka_\Omega)$ is an $\Omega$-\mtrba.
	\mlabel{thm:twfunction}	
\end{theorem}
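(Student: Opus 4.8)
The plan is to avoid verifying the matching twisted Rota-Baxter identity \meqref{eq:mtrb} by a direct computation, and instead to reduce to the \shadow case already settled in Proposition~\mref{th:mrb}, using the dictionary between \mtrbas and \mrbas provided by Proposition~\mref{pp:twrbo}. First I would check that the twists are legitimate: since each $k_\omega$ is continuous and free of zeros and $k_\omega(a)\neq 0$, the element $\mfraka_\omega=k_\omega/k_\omega(a)$ together with its reciprocal $k_\omega(a)/k_\omega$ lie in $C(I)$, so $\mfraka_\omega$ is invertible in $R=C(I)$. By Proposition~\mref{pp:twrbo} (applied with $\lambda_\Omega=(0)$), it then suffices to show that the conjugated operators $\check P_\omega:=\mfraka_\omega^{-1}P_{K_\omega}\mfraka_\omega$, $\omega\in\Omega$, make $(R,\check P_\Omega)$ an $\Omega$-MRBA of weight $0$.

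The key computation is to identify $\check P_\omega$ explicitly. Writing the separable kernel out gives $P_{K_\omega}(f)(x)=\int_a^x k_\omega(x)h_\omega(t)f(t)\,dt=k_\omega(x)\int_a^x h_\omega(t)f(t)\,dt$, so that for any $f\in C(I)$,
\[
\check P_\omega(f)(x)=\frac{k_\omega(a)}{k_\omega(x)}\,P_{K_\omega}\!\big(\mfraka_\omega f\big)(x)
=\frac{k_\omega(a)}{k_\omega(x)}\cdot k_\omega(x)\int_a^x h_\omega(t)\frac{k_\omega(t)}{k_\omega(a)}f(t)\,dt
=\int_a^x h_\omega(t)k_\omega(t)f(t)\,dt .
\]
Thus $\check P_\omega$ is exactly the Volterra operator $P_{H_\omega}$ with the \shadow kernel $H_\omega(t):=h_\omega(t)k_\omega(t)\in C(I)$: the $x$-dependent factor $k_\omega(x)$ of $\mfraka_\omega$ cancels the multiplier $k_\omega(x)$ coming out of the separable kernel, leaving a kernel in the dummy variable alone.

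Finally I would invoke Proposition~\mref{th:mrb}: the family $(P_H)_{H\in C(I)}$ of \shadow Volterra operators with common lower limit $a$ satisfies the matching Rota-Baxter identities of weight $0$, hence so does its subfamily $(\check P_\omega)_{\omega\in\Omega}=(P_{H_\omega})_{\omega\in\Omega}$. Therefore $(R,\check P_\Omega)$ is an $\Omega$-MRBA of weight $0$, and Proposition~\mref{pp:twrbo} transports this statement back to the desired conclusion that $(R,P_{K_\Omega},\mfraka_\Omega)$ is an $\Omega$-\mtrba (of weight $0$).

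I do not anticipate a genuine obstacle here; the only place demanding care is the bookkeeping in the displayed conjugation identity — in particular making sure that it is $\mfraka_\omega=k_\omega/k_\omega(a)$, and not, say, $k_\omega$ itself, that produces an honest \shadow kernel after conjugation, which is exactly what forces the normalization by $k_\omega(a)$ in the statement of the theorem. (A self-contained alternative would be to expand both sides of \meqref{eq:mtrb} using iterated integrals and integration by parts for $I$, but routing through Propositions~\mref{th:mrb} and~\mref{pp:twrbo} is shorter and makes the role of the twist transparent.)
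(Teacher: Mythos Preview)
Your proof is correct and takes a genuinely different route from the paper. The paper verifies the matching twisted Rota-Baxter identity~\meqref{eq:mtrb} by a direct computation: it expands $\mfraka_\alpha P_{K_\beta}(\mfraka_\alpha^{-1}P_{K_\alpha}(f)g)+\mfraka_\beta P_{K_\alpha}(\mfraka_\beta^{-1}fP_{K_\beta}(g))$ in integral form, cancels the $k$-factors, and then invokes integration by parts to recover $P_{K_\alpha}(f)P_{K_\beta}(g)$. You instead conjugate away the $x$-dependent factor $k_\omega(x)$, identify $\check P_\omega$ as the \shadow Volterra operator with kernel $H_\omega(t)=k_\omega(t)h_\omega(t)$, and then cite Propositions~\mref{th:mrb} and~\mref{pp:twrbo}. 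Your route is shorter and more structural: it makes clear that Theorem~\mref{thm:twfunction} is really a corollary of the \shadow case via the twist--untwist dictionary, and it explains conceptually why the twist must be (proportional to) $k_\omega$. The paper's direct computation, by contrast, is self-contained and exhibits integration by parts at the level of the twisted identity itself. One minor inaccuracy in your closing remark: conjugation by $k_\omega$ alone (without dividing by $k_\omega(a)$) yields the \emph{same} \shadow operator $\check P_\omega$, since the constant $k_\omega(a)$ cancels in the computation; the normalization $\mfraka_\omega(a)=1$ is a convention, not something your argument forces.
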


\begin{proof}
By assumption, the functions $\mfraka_\omega=\frac{k_\omega(x)}{k_\omega(a)}, \,\omega\in\Omega,$ are invertible on $I$. To verify that $(C(\RR),P_{K_\Omega},\mfraka_\Omega)$ is an $\Omega$-\mtrba, we just need to check Eq.~\meqref{eq:mtrb} as follows. For any $f,g\in C(I)$ and $\alpha,\beta\in\Omega$,
\begin{align*}
\mfraka_\alpha P_{K_\beta}&\left(\mfraka_\alpha^{-1}\big(P_{K_\alpha}(f)g\big)\right)+\mfraka_\beta P_{K_\alpha}\left(\mfraka_\beta^{-1}\big(fP_{K_\beta}(g)\big)\right)\\
	&=\frac{k_\alpha(x)}{k_\alpha(a)}
	\left(k_\beta(x)\int_a^x \frac{h_\beta(t)k_\alpha(a)}{k_\alpha(t)}\left(g(t)k_\alpha(t)\int_a^th_\alpha(u)f(u)\,du\right)\,dt\right)\\
    &\quad+\frac{k_\beta(x)}{k_\beta(a)}
	\left(k_\alpha(x)\int_a^x \frac{h_\alpha(t)k_\beta(a)}{k_\beta(t)}\left(f(t)k_\beta(t)\int_a^th_\beta(u)g(u)\,du\right)\,dt\right)\\
	&=k_\alpha(x)k_\beta(x)\left(\int_a^x h_\beta(t)g(t)\,dt\int_a^th_\alpha(u)f(u)\,du+\int_a^x h_\alpha(t)f(t)\,dt\int_a^th_\beta(u)g(u)\,du\right)\\	 &=k_\alpha(x)k_\beta(x)\left(\int_a^xh_\alpha(t)f(t)\,dt\right)\left(\int_a^xh_\beta(t)g(t)\,dt\right)=P_{K_\alpha}(f)P_{K_\beta}(g),
	\end{align*}
	where the third equality is due to integration by parts.
\end{proof}

\begin{coro}
With the notations in Theorem~\mref{thm:twfunction}, if $\Omega$ is a singleton, then $(C(I),P_{K_\Omega})$ is a \tw Rota-Baxter algebra with twist $\mfraka:=k(x)/k(a)$.
\end{coro}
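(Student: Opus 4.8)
The plan is to obtain this corollary as the immediate specialization of Theorem~\mref{thm:twfunction} to the case where $\Omega$ is a singleton, so that no new computation is required. Write $\Omega=\{\omega\}$ and drop the subscript $\omega$, so that the hypotheses of Theorem~\mref{thm:twfunction} become: a single separable kernel $K(x,t)=k(x)h(t)\in C(I^2)$ with $k$ free of zeros, a fixed $a\in I$, the associated Volterra operator $P_K(f)(x)=\int_a^x K(x,t)f(t)\,dt$ (which is $P_{K_\Omega}$ when $\Omega$ is a singleton), and the invertible element $\mfraka=k(x)/k(a)$. Theorem~\mref{thm:twfunction} then says exactly that $(C(I),(P_K),(\mfraka))$ is an $\Omega$-\mtrba.

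It remains only to note that, for a one-element parameter set $\Omega$, Definition~\mref{de:trba}(ii) reduces verbatim to Definition~\mref{de:trba}(i). Indeed, putting $\alpha=\beta=\omega$ in the matching identity Eq.~\meqref{eq:mtrb} and invoking the standing weight-zero convention (so that $\lambda_\beta=0$) turns it into
\[P_K(f)P_K(g)=\mfraka P_K\!\left(\mfraka^{-1}P_K(f)g\right)+\mfraka P_K\!\left(\mfraka^{-1}fP_K(g)\right)\tforall f,g\in C(I),\]
which is precisely the twisted Rota-Baxter identity Eq.~\meqref{eq:trb} of weight zero with twist $\mfraka$. Hence $(C(I),P_K)$ is a \tw Rota-Baxter algebra with twist $\mfraka=k(x)/k(a)$, as claimed.

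There is essentially no obstacle here; the only content is the bookkeeping identification of the matching axiom on a singleton index set with the non-matching one, which is transparent from comparing Eq.~\meqref{eq:mtrb} and Eq.~\meqref{eq:trb}. If one preferred a self-contained argument instead of quoting Theorem~\mref{thm:twfunction}, one could simply rerun the integration-by-parts computation in its proof with the single kernel $K$; but referencing the theorem already in hand is cleaner and avoids duplication.
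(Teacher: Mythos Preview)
Your proposal is correct and matches the paper's approach: the paper states this corollary without proof, treating it as an immediate specialization of Theorem~\mref{thm:twfunction} to a singleton $\Omega$, which is exactly what you do. Your explicit identification of Eq.~\meqref{eq:mtrb} with Eq.~\meqref{eq:trb} when $\alpha=\beta=\omega$ is the only bookkeeping needed, and you have it right.
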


\begin{coro}
	\mlabel{co:septrb}
With the notations in Theorem~\mref{thm:twfunction}, a Volterra operator $P_{K_\omega},\omega\in\Omega$ is a Rota-Baxter operator if $k_\omega(x)$ is a constant function, that is, the kernel $K_\omega$ is phantom. When all  kernels $K_\omega,\omega\in\Omega,$ are phantom, $(C(I),P_{K_\Omega})$ is a MRBA.
\end{coro}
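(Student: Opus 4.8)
The plan is to reduce the statement to Theorem~\mref{thm:twfunction} via the elementary observation that a phantom kernel produces the trivial twist. First I would observe that if $k_\omega(x)$ is a constant function, then, since $k_\omega$ is assumed to be free of zeros, it is a nonzero constant; hence $\mfraka_\omega = k_\omega(x)/k_\omega(a)$ is the constant function $1$, i.e.\ the unit of $C(I)$. Thus the hypotheses of Theorem~\mref{thm:twfunction} are satisfied and $(C(I), P_{K_\Omega}, \mfraka_\Omega)$ is an $\Omega$-\mtrba, but now with every twist $\mfraka_\omega$ equal to $1$.

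Next I would substitute $\mfraka_\omega = 1$ into the defining identity~\meqref{eq:mtrb} of an $\Omega$-\mtrba (recall the weight-zero convention in force, so $\lambda_\beta = 0$). Every conjugating factor $\mfraka_\alpha,\mfraka_\alpha^{-1},\mfraka_\beta,\mfraka_\beta^{-1}$ then disappears and Eq.~\meqref{eq:mtrb} becomes exactly the matching Rota-Baxter identity~\meqref{eq:mrb} of weight zero. Alternatively, one may invoke Proposition~\mref{pp:twrbo}: since $\check P_\omega := \mfraka_\omega^{-1}P_\omega\mfraka_\omega = P_\omega$ when $\mfraka_\omega = 1$, the assertion that $(C(I), P_{K_\Omega})$ is an $\Omega$-\mtrba with trivial twist is literally the assertion that $(C(I), P_{K_\Omega})$ is an $\Omega$-MRBA of weight zero.

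Finally I would specialize: taking $\Omega$ to be a singleton recovers that a single phantom-kernel Volterra operator satisfies the Rota-Baxter identity~\meqref{eq:rbo} of weight zero, so it is a Rota-Baxter operator; taking general $\Omega$ with all the $K_\omega$ phantom gives the MRBA claim. I expect no real obstacle here: the only point needing a (trivial) verification is that a constant kernel indeed yields the unit twist, and the zero-free hypothesis enters only to ensure $k_\omega(a)\neq 0$.
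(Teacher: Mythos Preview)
Your proposal is correct and follows essentially the same argument as the paper: invoke Theorem~\mref{thm:twfunction}, observe that a constant $k_\omega$ gives the trivial twist $\mfraka_\omega=1$, and note that Eq.~\meqref{eq:mtrb} then collapses to Eq.~\meqref{eq:mrb}. The paper's proof is terser but the logic is identical; your optional detour through Proposition~\mref{pp:twrbo} is a harmless rephrasing of the same reduction.
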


\begin{proof}
According to Theorem~\mref{thm:twfunction}, $P_{K_\omega}$ is a \tw Rota-Baxter operator with $\mfraka_\omega=k_\omega(x)/k_\omega(a)$, which is a Rota-Baxter operator if $\mfraka_\omega=1$. This holds if and only if $k_\omega(x)=k_\omega(a)$ is a constant. In this Eq.~\meqref{eq:mtrb} becomes Eq.~\meqref{eq:mrb}.
\end{proof}

\begin{exam}
	\begin{enumerate}
		\item For $K(x,t)=e^{-x+t}=e^t/e^x$. The operator $P_K:C(\RR)\to C^1(\RR)$ satisfies the Reynolds identity in Eq.~\meqref{eq:rey2}. See~\mcite{Bi,RoR,Ro1} for further details on the identity.
		On the other hand, for $\mfraka=k(x)/k(a)=e^{a-x}$, we obtain a \tw Rota-Baxter operator.
		\mlabel{it:exp}
		\item For $K(x,t) = x$, taking $\mfraka=k(x)/k(a)=x/a$, we obtain a \tw Rota-Baxter operator.
	\end{enumerate}
	\mlabel{ex:sepex}
\end{exam}

\section{An algebraic framework for integral equations}
\mlabel{sec:gen}

We now introduce a framework to express integral equations, in particular the Fredholm equations and Volterra equations. It is our hope that this approach will on one hand provide a uniform and precise context for studying integral equations that are more general than their existing forms and, on the other hand, will also bring the algebraic methods and perspectives into the study of integral equations and lay the foundation for their symbolic computations. For some related literature, see~\mcite{BLRUV,GGR,GGZ,GRR,RR}.

\subsection{Background}
We will first express integral equations in terms of elements of a suitable operated algebra~\mcite{Gop,Gub}.
An {\em algebraic} integral equation can be understood as the annihilation of an ``integral" algebraic expression $\Phi(Y,P_\Omega,A)$ consisting of several ingredients and restrictions~\mcite{Ze}:

\begin{enumerate}
\item
an algebra of variable functions, including a set $Y$ of unknown functions to be solved from the integral equation;
\mlabel{it:opeq1}
\item
a set $P_\Omega:=(P_\omega\,|\,\omega\in\Omega)$ of integral operators in various forms, set apart by
\begin{enumerate}
\item the lower or upper limits (each being fixed or variable, and in the later case, independent variables or intermediate variables);
\item the kernels for the integral operators, as functions in both the dummy variables of the integrals and the independent variables for the integral equation;
\end{enumerate}
Special cases are the Volterra operators and the Fredholm operators;
\item
a set $A$ of ``free term" or coefficient functions which can appear both inside and outside of the integrals. Some of them can be constant, treated as parameters.
\item
These ingredients are put together by the algebraic operations together with the action of the integral operators.
\end{enumerate}

\begin{remark}
	\mlabel{rk:opalg}
We note a subtle difference in the meanings of algebraicity of an operator equation, as discussed in Item~\meqref{it:opeq1}. If the unknown functions are polynomials of the unknown variables $Y$, then the equation is called {\bf algebraic}, while if other functions of $Y$ can appear, the equation is called {\bf operator algebraic}. In the following integral equations, the first one is algebraic while the second one is operator linear since $y^{3/2}$ is not polynomial in $y$. But the second equation is still operator algebraic since the operations for the integrals are algebraic (and in composition).
\end{remark}

We put Example~\mref{ex:classic} into this framework.

\begin{exam}
\begin{enumerate}
\item {\bf Volterra's population model}. Here $Y = \{u\}$, there is one integral operator $P$ defined by Eq.~\meqref{eq:pop}
and the set $A$ can be taken to be the set of all constants.  Then Volterra's population model in Eq.~\meqref{eq:volt} becomes
\[
u = u_0 + aP(u) - bP(u^2) - cP(uP(u)).
\]
\item
{\bf Thomas-Fermi equation}. In this setting, $Y = \{y\}$, there are two Volterra operators $P_1$ and $P_2$ in Eq.~\meqref{eq:pot}
and the set $A$ can be taken to be the ring of polynomials $\RR[x]$.  Then the Thomas-Fermi equation \meqref{eq:tf} becomes
\begin{equation} \mlabel{eq:tf2}
y = 1 + Bx + P_1(P_2(y^{3/2})).
\end{equation}
\end{enumerate}
\end{exam}

To inspire a precise general framework for integral equations, let us first recall how we formulate {\em algebraic} equations. An algebraic equation consists of several ingredients:

\begin{enumerate}
\item
a set $X$ of variables;
\item
a set of ``free term" elements from a prefixed $\bfk$-algebra $A$ over the ground field $\bfk$.
\item
The ingredients can be put together by the algebraic operations.
\end{enumerate}

As is well-known,
the general form of an algebraic equation is an element in the polynomial algebra $A[X]$.

Another motivation which is more closely related to our study is differential equations, consisting of ingredients and conditions:

\begin{enumerate}
\item
a set $Y$ of unknown functions;
\item
a set $d_\Omega:=(d_\omega\,|\, \omega\in \Omega)$ of differential operators;
\item
a set of ``free term" or coefficient functions from a differential algebra $(A,d_{0,\Omega})$ where $d_{0,\Omega}:=(d_{0,\omega}\,|\,\omega\in \Omega)$.
\item
The ingredients can be put together by the algebraic operations, together with the differential operators.
\end{enumerate}

Note that here $A$ carries its own commuting derivations $d_{0,\omega}, \omega \in \Omega$, called an {\bf $\Omega$-ring} in differential algebra~\mcite{Ko,Si} where $\Delta$ instead of $\Omega$ is used. Elements in $A$ are called the {\bf coefficient functions} because they are the coefficients in the differential equation. A coefficient function is not necessarily a constant function (defined to be the ones in $\bfk$) for the derivations $d_{0,\omega}$ or $d_\omega$.

In differential algebra~\mcite{Ko,Si}, a differential equation with coefficients in an $\Omega$-ring $(A,d_{0,\Omega})$ is an element in the differential polynomial algebra $A\{Y\}:=A_\Omega\{Y\}$ which is simply the polynomial algebra
\begin{equation} \mlabel{eq:diffpoly}
A_\Omega\{Y\}:=A[\Delta_\Omega Y], \text{ where }
 \Delta_\Omega Y:= Y\times C(\Omega).
\end{equation}
Here $C(\Omega)$ is the commutative monoid over $\Omega$:
$$C(\Omega):=\left\{\left .\prod_{\omega\in\Omega} \omega^{n_\omega}\,\right|\, n_\omega\in \NN, \omega\in \Omega, \text{ with finitely many } n_\omega \text{ positive}\right\}$$
representing the iterations from $\Omega$. An element $\mfraka=\prod_{\omega\in \Omega} \omega^{n_\omega}$ of $C(\Omega)$ is identified with the map
$$\bfn=\bfn_\mfraka:\Omega\to \NN, \quad \omega\mapsto n_\omega,$$
with finite support. For $\omega\in \Omega$, the derivation $d_\omega$ on $A_\Omega\{Y\}$ first restricts to $d_{0,\omega}$ on $A$ and further sends $(y,\mfraka)$ to $(y,\omega\mfraka)$, that is, sending $\bfn_\mfraka$ to $$\bfn_{\mfraka,\omega}: \Omega\to \NN, \alpha\mapsto \left\{\begin{array}{ll} \bfn(\alpha)+1, & \alpha=\omega, \\
\bfn(\alpha), & \text{ otherwise. } \end{array} \right .$$
These two conditions determine $d_\omega$ uniquely since $A[\Delta_\Omega Y]=A\ot_\bfk \bfk[\Delta_\Omega Y]$ as an algebra for which $d$ applies to a pure tensor $a\ot f=af$ by the Leibniz rule.  See~\mcite{Ko,PS,Ri} for more details.

For example, with $Y=\{y\}$ and $\Omega=\{d\}$ both singletons, $A\{Y\}=A\{y\}=A[y^{(n)}\,|\, n\geq 0]$ is the space for differential equations with one derivation $d$ and one unknown function $y$.

While this gives precise definitions of algebraic equations and (algebraic) differential equations,
there does not appear to be a general notion of integral equations, even though in broad terms an integral equation is an equation involving integral operators.

Based on the previous discussion, as an analogy of an algebraic or a differential equation,  an integral equation should be an element of a suitably defined ``integral polynomial algebra" which should be a free object in a suitable category of integral algebras.
Such integral algebras would depend on the integral operators involved. Since an arbitrary integral operator is not known to satisfy any algebraic relations,
in order to serve the general purpose, we impose only linearity on the operators, which leads us to an operated algebra~\cite{Gop}.

First in Section~\mref{ss:freeop} we give two constructions of free commutative operated algebras, one by bracketed words and one by typed decorated trees, that is, rooted trees whose vertices and edges are both decorated. This setup will provide a general framework for operator equations and, in particular, integral equations, with constant coefficients.

In order to deal with integral equations with variable coefficients, we consider a relative version of operated algebras in Section~\mref{ss:relfreeop}, in the sense of operated algebras with an operated homomorphism from a given operated algebra, in practice a ``coefficient algebra" of functions equipped with integral operators. The free objects in this category of relative operated algebras provide the formal and rigorous context for operator equations with variable coefficients.

Of course such an interpretation is not our goal by itself, but rather the first step in understanding integral equations from an algebraic viewpoint. Applications will be given in Section~\mref{sec:sep}.

\subsection{Free operated algebras and decorated rooted trees}
\mlabel{ss:freeop}

We construct free operated algebras first by bracketed works and then by rooted trees with decorations on both the vertices and the edges. \subsubsection{Construction of free operated algebras by bracketed elements}
We start with some general definitions and notations.

\begin{defn} \label{de:mapset}\mlabel{de:opalg}
	Let $\Omega$ be a nonempty set. An {\bf $\Omega$-operated $\bfk$-module} (resp. {\bf $\Omega$-operated $\bfk$-algebra}) is a $\bfk$-module (resp. $\bfk$-algebra) $V$ together with $\bfk$-linear maps $P_\omega: V\to V,  \omega\in \Omega$. Denote $P_\Omega:=(P_\omega\,|\, \omega\in\Omega)$.
	A homomorphism from an $\Omega$-operated $\bfk$-module (resp. $\bfk$-algebra) $(V, P_\Omega)$ to an $\Omega$-operated $\bfk$-module (resp. $\bfk$-algebra) $(U,Q_\Omega)$ is a $\bfk$-module (resp. $\bfk$-algebra) homomorphism $f :V\to U$ such that $f P_\omega= Q_\omega f, \omega\in \Omega$.
\end{defn}

The class of $\Omega$-operated   algebras with homomorphisms between them forms the category of $\Omega$-operated   algebras.

Let $C$ be an   algebra.
We construct the free $\Omega$-operated unitary algebra over $C$, by applying bracketed words. See~\mcite{Gop} for more details on bracketed words.

\begin{defn}
	\mlabel{de:freeabalg}
	Let $\Omega$ be a nonempty set and $C$ a $\bfk$-algebra. The {\bf free $\Omega$-operated $\bfk$-algebra} over $C$ is an $\Omega$-operated $\bfk$-algebra $(\fopaa,\Pi_\Omega)$ together with a $\bfk$-algebra homomorphism $i_C:C\lra \fopaa$ satisfying the universal property that, for any $\Omega$-operated $\bfk$-algebra $(R,P_\Omega)$ and $\bfk$-algebra homomorphism $f:C\to R$, there is a unique $\Omega$-operated $\bfk$-algebra homomorphism $\free{f}: (\fopaa,\Pi_\Omega) \to (R,P_\Omega)$ such that $\free{f} i_C=f$.
\end{defn}

In the special case when $C$ is the symmetric algebra $S(V)$ on a module $V$ (resp. the polynomial algebra $\bk[Y]$ on a set $Y$), we obtain the free $\Omega$-operated algebra over the module $V$ (resp. over the set $Y$), where the algebra homomorphism $i_C$ is replaced by a linear map (resp. a set map). Recall $S(V) = \bigoplus_{n\ge0}S^n(V)$ where $S^n(V)$ is the $n$-th symmetric tensor power of $V$ with the convention that $S^0(V) = \bfk$.  Denote $S^+(V) := \bigoplus_{n\ge1} S^n(V)$.

Given an $\omega \in \Omega$ and a module $V$, let $\lfloor V \rfloor_{\omega}$ denote the module $\big\{\lfloor v\rfloor_{\omega}\mid v\in V\big\}$. So it is a copy of $V$ but is linearly independent of $V$. We also assume that the modules $\lfloor V \rfloor_{\omega}$ are linearly independent when $\omega$ varies in $\Omega$. Another way to think of $\lc V\rc_\omega$ is that $\lc V\rc_\omega$ is a module together with a given linear bijection $\pi_\omega: V\to \lc V\rc_\omega$ for which we denote $\pi_\omega (v)=\lc v\rc_\omega$, hence the notation $\lc V\rc_\omega$. In reality, $\lc v\rc_\omega$ is meant to be the image of $v$ under the action of a linear operator $\lc\cdot \rc_\omega$. Note that by definition, we have the linearity
$$ \lc au+bv\rc_\omega = a\lc u\rc_\omega + b\lc v\rc_\omega \tforall a, b\in \bfk, u, v\in V.$$
Denote
$$\lc V \rc_\Omega := \bigoplus_{\omega \in \Omega} \lc V \rc_\omega.$$

With the notations in Definition~\mref{de:freeabalg}, we now construct the free $\Omega$-operated algebra over an algebra $C$. The construction is modified after the one for free $\Omega$-operated algebras in~\mcite{Gop}, with the commutativity condition further imposed, as well as the condition on the generating set being relaxed to a generating algebra. See the remark after Definition~\mref{de:freeabalg}.

We will obtain the free object as the limit of a direct system
\begin{align*}
\{i_{n,\,n+1}: \algw(C)_n\rightarrow \algw(C)_{n+1}\}_{n=0}^{\infty}
\end{align*}
of algebras $\algw_n:=\algw(C)_{n},n\geq 0$, where the transition morphisms $i_{n,\,n+1}$ are natural injective algebra homomorphisms.

For the initial step of $n=0$, we define
$$\algw_0:=C.$$
We then define
\begin{equation*}
\algw_1:=\algw(C)_1:= C\ot S(\lc C \rc_\Omega)=C\oplus (C\ot S^+(\lc C\rc_\Omega)),
\end{equation*}
with the natural injective algebra homomorphism
$$ i_{0,\,1}: \algw_0:=C\hra \algw_1=C\oplus (C\ot S^+(\lc C\rc_\Omega)).$$

For the inductive step, let $n\geq 1$ be given and assume that the algebras $\algw_k, k\leq n,$ and injective algebra homomorphisms
\begin{equation}
i_{k-1,\,k}: \algw_{k-1}\hra \algw_k, 1\leq k\leq n,
\mlabel{eq:inclu2ab}
\end{equation}
have been defined.
We then define
\begin{equation}
\begin{aligned}
\algw_{n+1}&:=\algw(C)_{n+1}:=C\ot S(\lc \algw_n\rc_\Omega) =C\oplus (C\ot S^+(\lc \algw_n\rc_\Omega)).
\end{aligned}
\mlabel{eq:wrecab}
\end{equation}

The natural injection $i_{n-1,\,n}:\algw_{n-1}\hra \algw_n$ in Eq.~(\mref{eq:inclu2ab}) induces the natural linear injection
$$\lc \algw_{n-1} \rc_\Omega \hookrightarrow \lc \algw_n \rc_\Omega$$
and thus an algebra injection
$$ i_{n,\,n+1}:\algw_n
:=C\ot S\big(\lc \algw_{n-1}\rc_\Omega\big)
\hookrightarrow C\ot S\big(\lc \algw_n \rc_\Omega\big) =:\algw_{n+1}.$$

This completes the inductive construction of the direct system. Finally we define the direct limit of algebras
\begin{equation} \mlabel{eq:dirw}
\algw(\Omega,C):=\lim_{\longrightarrow}\algw_{n}=\bigcup_{n\geq 0}\algw_{n}.
\end{equation}
As a direct limit of algebras, $\algw(\Omega,C)$ is an algebra.

For $\omega\in \Omega$, define a linear map
$$
\opw_{\omega}:\algw(\Omega,C)\rightarrow \algw(\Omega,C),\, u\mapsto
\lc u \rc_{\omega}.
$$

Thus the pair $\big(\algw(\Omega,C), \opw_\Omega\big)$ is an $\Omega$-operated algebra.
Adapting the proof of the free $\Omega$-operated algebras in~\cite[Corollary~3.6]{Gop}, we can show that $\big(\algw(\Omega,C), \opw_\Omega\big)$ is the free   $\Omega$-operated algebra over $C$. The proof is also a simplified version of the proof for the relative case in Theorem~\mref{thm:freea}. So the reader can also be referred there for details.

\begin{theorem}
\mlabel{thm:freeaab}
Let $\Omega$ be a nonempty set and $C$ a   $\bfk$-algebra. Let $j_{C}: C\hra \algw(\Omega,C)$ be the natural injection.
The pair $(\algw(\Omega,C),\, \opw_\Omega)$ together with $j_C$ is the free  $\Omega$-operated $\bfk$-algebra over $C$.
\end{theorem}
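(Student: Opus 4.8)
The plan is to verify the universal property of Definition~\mref{de:freeabalg} directly, using the explicit inductive description of $\algw(\Omega,C)=\bigcup_{n\ge 0}\algw_n$. Given an $\Omega$-operated $\bfk$-algebra $(R,P_\Omega)$ and a $\bfk$-algebra homomorphism $f:C\to R$, I will construct the required $\Omega$-operated homomorphism $\free{f}:\algw(\Omega,C)\to R$ as a direct limit of compatible algebra homomorphisms $\free{f}_n:\algw_n\to R$, and then check uniqueness. First I would set $\free{f}_0:=f:\algw_0=C\to R$. For the inductive step, suppose $\free{f}_n:\algw_n\to R$ has been defined; since $\algw_{n+1}=C\ot S(\lc\algw_n\rc_\Omega)$, it suffices by the universal property of the symmetric algebra and of the tensor product of commutative algebras to specify an algebra map out of $C$ and a linear map out of $\lc\algw_n\rc_\Omega$. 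For the former take $f:C\to R$; for the latter take the composite $\lc\algw_n\rc_\Omega\xrightarrow{\ \cong\ }\bigoplus_{\omega\in\Omega}\algw_n\xrightarrow{(P_\omega\circ\free{f}_n)_\omega} R$, i.e. $\lc u\rc_\omega\mapsto P_\omega(\free{f}_n(u))$. This determines $\free{f}_{n+1}:\algw_{n+1}\to R$ as an algebra homomorphism.

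Next I would check that these maps are compatible with the transition injections, i.e. $\free{f}_{n+1}\circ i_{n,n+1}=\free{f}_n$, by induction on $n$: on the $C$-component both restrict to $f$, and on a generator $\lc u\rc_\omega$ with $u\in\algw_{n-1}$ one has $\free{f}_{n+1}(\lc u\rc_\omega)=P_\omega(\free{f}_n(u))=P_\omega(\free{f}_{n-1}(u))=\free{f}_n(\lc u\rc_\omega)$, using the inductive hypothesis $\free{f}_n\circ i_{n-1,n}=\free{f}_{n-1}$ together with the fact that $\lc\algw_{n-1}\rc_\Omega\hookrightarrow\lc\algw_n\rc_\Omega$ is induced by $i_{n-1,n}$. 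By the universal property of the direct limit of algebras, the family $(\free{f}_n)_{n\ge 0}$ induces a unique algebra homomorphism $\free{f}:\algw(\Omega,C)\to R$ with $\free{f}|_{\algw_n}=\free{f}_n$; in particular $\free{f}\circ j_C=\free{f}_0=f$. That $\free{f}$ is a homomorphism of $\Omega$-operated algebras, i.e. $\free{f}\circ\opw_\omega=P_\omega\circ\free{f}$, follows because for $u\in\algw(\Omega,C)$, say $u\in\algw_n$, we have $\opw_\omega(u)=\lc u\rc_\omega\in\algw_{n+1}$ and $\free{f}(\lc u\rc_\omega)=\free{f}_{n+1}(\lc u\rc_\omega)=P_\omega(\free{f}_n(u))=P_\omega(\free{f}(u))$.

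Finally, for uniqueness, suppose $g:\algw(\Omega,C)\to R$ is any $\Omega$-operated algebra homomorphism with $g\circ j_C=f$. I would show $g|_{\algw_n}=\free{f}_n$ by induction on $n$: the case $n=0$ is the hypothesis $g|_C=f$; for the inductive step, $\algw_{n+1}$ is generated as an algebra by $C$ and by $\lc\algw_n\rc_\Omega$, on $C$ both $g$ and $\free{f}$ agree with $f$, and on $\lc u\rc_\omega=\opw_\omega(u)$ with $u\in\algw_n$ we get $g(\lc u\rc_\omega)=g(\opw_\omega(u))=P_\omega(g(u))=P_\omega(\free{f}_n(u))=\free{f}_{n+1}(\lc u\rc_\omega)$ since $g$ commutes with the operators and $g|_{\algw_n}=\free{f}_n$ by induction. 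Hence $g=\free{f}$, proving the universal property. I expect the only real subtlety—rather than a genuine obstacle—to be bookkeeping the identifications $\algw_{n+1}=C\ot S(\lc\algw_n\rc_\Omega)$ and checking that the maps $\free{f}_n$ are well defined as algebra homomorphisms (as opposed to mere linear maps) and are genuinely compatible with the $i_{n,n+1}$; everything else is a routine induction, and as noted in the excerpt it also follows by specializing the relative case Theorem~\mref{thm:freea}.
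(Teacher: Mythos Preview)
Your proposal is correct and follows precisely the approach the paper indicates: the paper does not write out a full proof of Theorem~\mref{thm:freeaab} but refers the reader to the relative version in Theorem~\mref{thm:freea}, whose proof proceeds exactly by constructing compatible $\free{f}_n$ inductively on the direct system, passing to the limit, and then verifying the operated-algebra and uniqueness properties just as you do. Your argument is thus the natural simplification of the proof of Theorem~\mref{thm:freea} to the non-relative case, which is exactly what the paper intends.
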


\subsubsection{Realization as typed trees}

Now we apply typed rooted trees, with decorated vertices and edges, to give another construction of the free  $\Omega$-operated algebra $\algw(\Omega,C)$ just obtained by bracketed words. The rooted tree construction has the advantage of being intuitive and non-recursive, giving an easy understanding of the free object. The bracketed word construction is more precise, and the recursive definition is convenient to be applied in proofs.

See~\mcite{BHZ,Foi} for more details on typed rooted trees from the perspective of renormalization, algebra and combinatorics. See also~\mcite{CGPZ} for a related treatment for the rooted tree integrals and sums, and their renormalization.

Let $\sett$ denote the set of nonplanar rooted trees.
For a nonempty set $\Omega$ and an algebra $C$, let $\sett(\Omega,C)$ denote the set of the trees in $\sett$ with their vertices decorated by elements of $C$ and their edges decorated by elements of $\Omega$. Such trees are called {\bf vertex-edge decorated trees}. Some examples are
\vspace{-.7cm}
$$
\ctone{a}\quad
\cttwo{a}{b}{\alpha}\quad
\ctthree{a}{b}{c}{\alpha}{\beta}\quad
\ctfour{a}{b}{c}{d}{\alpha}{\beta}{\gamma}\quad
\ctfourb{a}{b}{c}{d}{\alpha}{\beta}{\gamma}\quad
\ctfourc{a}{b}{c}{d}{\alpha}{\beta}{\gamma}\quad
\ctfive{a}{b}{c}{d}{e}{\alpha}{\beta}{\gamma}{\delta} \!\!\!\! , a, b, c, d, e\in C, \alpha, \beta, \gamma, \delta\in \Omega.$$

Let $\algt(\Omega,C)$ denote the module spanned by $\sett(\Omega,C)$, allowing ($\bk$-)linearity on decorations of the vertices.

We next define the {\bf \graftprod} $T \tprod U$ of two $(\Omega,C)$-decorated trees $T$ and $U$ to be obtained by merging the roots of $T$ and $U$ into a common root shared by the branches of both the trees $T$ and $U$, and decorating the common root by the product of the decorations of the roots of $T$ and $U$.
For example
$$ \ctone{a}\tprod \quad \ctfour{b}{c}{d}{e}{\alpha}{\beta}{\gamma} = \quad \ctfour{ab}{c}{d}{e}{\alpha}{\beta}{\gamma}, \quad
\cttwo{a}{b}{\alpha} \tprod\quad  \cttwo{c}{d}{\beta} =\ \ \  \ctthree{ac}{b}{d}{\alpha}{\beta}.$$

For each $\omega\in\Omega$, define a linear operator $\opt_\omega$ on $\algt(\Omega,C)$ to be the {\bf \extnop}, sending a decorated tree $T$ in $\sett(\Omega,C)$ to a new tree $\opt_\omega(T)$ by adding a new root connecting to the root of $T$, decorating the edge connecting the new root to the root of $T$ by $\omega$ and decorating the new root by $1$.
For example,
$$\opt_{\omega}(\ctone{a})=\cttwo{1}{a}{\omega}, \quad \opt_{\omega}\Big( \hspace{.4cm} \ctthree{a}{b}{c}{\alpha}{\beta}\Big) =\quad \ctfourc{1}{a}{b}{c}{\omega}{\alpha}{\beta}.$$

Note that often in the literature the product of trees is given by concatenation and the unary operator is given by grafting. See for example Connes-Kreimer Hopf algebra of rooted trees \mcite{CK} and the constrution of free {\em noncommutative} operated algebras~\mcite{Gop}. Here the role of the two operations is somewhat reversed. Thus we use the terms \graftprod and \extnop to emphasize the difference.

With the notions above, the pair $((\algt(\Omega,C), \tprod), \opt_\Omega)$ is an $\Omega$-operated algebra.

We next define a bijection
$$\eta:\algw(\Omega,C) \to \algt(\Omega,C)$$
by the universal property of $\algw(\Omega,C)$ as the free $\Omega$-operated algebra over $C$.  More specifically, $\eta$ is defined as a direct limit of maps $\eta_n$ on the direct system $\algw_n, n\geq 0$, defining $\algw(\Omega,C)$ in Eq.~\meqref{eq:dirw}.  By showing that $\eta$ is a bijection, we will establish $\algt(\Omega,C)$ as another construction of the free $\Omega$-operated algebra on $C$.

In fact, $\algt(\Omega,C)$ is also the direct limit of a direct system $\algt_n, n\geq 0$, which equips $\algt(\Omega,C)$ with a filtered algebra structure. First define the {\bf height} of a rooted tree to be the length of the longest path from the root to the leafs. For $n\geq 0$, let $\sett_n$ denote the subset of $\sett$ with height less than or equal to $n$ and let $\algt_n$ be the linear span of $\sett_n$. From the definition of the product $\tprod$, $\algt_n$ is a subalgebra. Further the operator $\opt_\omega, \omega\in \Omega,$ sends $\algt_n$ to $\algt_{n+1}$.

To obtain the bijection $\eta:\algw(\Omega,C)\to \algt(\Omega,C)$, we first define
$$\eta_0: \algw_0=C\to \algt_0, \quad a \mapsto \ctone{a}, \quad a\in C,$$
evidently a bijective algebra homomorphism.

Next for a given $n\geq 0$, assume that a bijective algebra homomorphism
$\eta_n:\algw_n \to \algt_n$ has been defined. Then the bijection defines a linear bijection
$$\lc \eta_n\rc_\omega: \lc \algw_n\rc_\omega \to \opt_\omega(\algt_n), \quad
\lc u\rc_\omega \mapsto \opt_\omega(\eta_n(u)), \quad u\in \algw_n$$
and hence an algebra homomorphism
$$ \eta_{n+1}: \algw_{n+1}=C\ot S(\lc \algw_n\rc_\Omega)
\to \algt_{n+1}.$$
To be specific, consider the decomposition
$\algw_{n+1}=C \oplus (C\ot S^+(\lc \algw_n\rc_\Omega))$ and consider the two cases when $u$ is in either of the direct summands. If $u$ is in $C$, then define $\eta_{n+1}(u):=\ctone{u}.$ If $u$ is a pure tensor in $C\ot S^+(\lc \algw_n\rc_\Omega)$, then
$$u=c\ot (\lc u_1\rc_{\omega_1} \cdots \lc u_k\rc_{\omega_k}), \quad c\in C, u_i\in \algw_n, \omega_i\in \Omega, 1\leq i\leq k, k\geq 1,$$
with the multiplication in $S^+(\lc \algw_n\rc_\Omega)$ suppressed. Then $\eta_n(u_i), 1\leq i\leq k,$ are trees in $\sett_n$ by the inductive hypothesis. We thus define
\begin{equation}
 \eta_{n+1}(u):= \ctone{c}\tprod \opt_{\omega_1}(\eta_n(u_1))\tprod \cdots \tprod \opt_{\omega_k}(\eta_n(u_k)).
\mlabel{eq:phidef}
\end{equation}
By the definitions of the \extnop $\opt_{\omega_i}$ and the \graftprod $\tprod$,
the new tree $\eta_{n+1}(u)$ is the grafting of the trees $\opt_{\omega_i}(\eta_n(u_i)), 1\leq i\leq k,$ with $c$ decorating the common root. On the other hand, any rooted tree in $\sett_{n+1}$ is either $\ctone{c}$ or of the forms described above and thus has the factorization in Eq.~\eqref{eq:phidef}. Thus $\eta_{n+1}$ is surjective and an inverse map of $\eta_{n+1}$ can be defined by reversing the above process. This completes the inductive construction of $\eta$ and its bijectivity.

To summarize, we have
\begin{theorem}
The $\Omega$-operated $\bfk$-algebra $((\algt(\Omega,C),\tprod),\opt_\Omega)$, together with natural imbedding $i_C: C\to \algt(\Omega,C)$, is the free $\Omega$-operated $\bfk$-algebra over $C$.
\mlabel{thm:freeobtr}
\end{theorem}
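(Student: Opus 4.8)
The plan is to promote the map $\eta\colon\algw(\Omega,C)\to\algt(\Omega,C)$ constructed above to an isomorphism of $\Omega$-operated $\bfk$-algebras compatible with the structure maps from $C$, and then to transport the universal property of $\algw(\Omega,C)$ from Theorem~\mref{thm:freeaab} across this isomorphism. Since the construction of $\eta$ and the sketch of its bijectivity have already been carried out in the discussion preceding the statement, the proof amounts to assembling these pieces and checking the two compatibilities.

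First I would confirm, by induction on $n$ along the direct system, that each $\eta_n\colon\algw_n\to\algt_n$ is a bijective algebra homomorphism. The base case $\eta_0$ is immediate. For the inductive step, the crucial point is that the \graftprod $\tprod$ is commutative, associative and unital with unit $\ctone{1}$, so that the assignment in Eq.~\meqref{eq:phidef} descends to a well-defined algebra homomorphism on $C\ot S(\lc\algw_n\rc_\Omega)$: the symmetric-tensor relations among the factors $\lc u_i\rc_{\omega_i}$ are exactly matched by the commutativity of $\tprod$ at the common root, and multiplication by the $C$-component corresponds to merging root decorations. The $\bfk$-linearity in vertex decorations is built into the definition of $\algt(\Omega,C)$. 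For bijectivity one uses that every tree in $\sett_{n+1}$ is either $\ctone{c}$ or factors as $\ctone{c}\tprod\opt_{\omega_1}(T_1)\tprod\cdots\tprod\opt_{\omega_k}(T_k)$ with $T_i\in\sett_n$, uniquely up to reordering the branches at the root; this yields an explicit inverse, and one checks these inverses commute with the transition maps $i_{n,n+1}$, hence glue to an inverse of $\eta$ in the direct limit.

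Next I would record the two compatibilities. From $\eta_{n+1}(\lc u\rc_\omega)=\opt_\omega(\eta_n(u))$ (a special case of Eq.~\meqref{eq:phidef} with $c=1$, $k=1$) we obtain $\eta\,\opw_\omega=\opt_\omega\,\eta$ for every $\omega\in\Omega$, so $\eta$ is a morphism of $\Omega$-operated algebras; and from $\eta_0(a)=\ctone{a}=i_C(a)$ we get $\eta\circ j_C=i_C$. Thus $\eta$ is an isomorphism in the category of $\Omega$-operated $\bfk$-algebras under $C$.

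Finally, given any $\Omega$-operated $\bfk$-algebra $(R,P_\Omega)$ and an algebra homomorphism $f\colon C\to R$, Theorem~\mref{thm:freeaab} furnishes a unique $\Omega$-operated homomorphism $\free{f}\colon\algw(\Omega,C)\to R$ with $\free{f}\circ j_C=f$; then $\free{f}\circ\eta^{-1}\colon\algt(\Omega,C)\to R$ is an $\Omega$-operated homomorphism with $(\free{f}\circ\eta^{-1})\circ i_C=f$, and its uniqueness follows from that of $\free{f}$ together with the bijectivity of $\eta$. This gives the universal property for $\algt(\Omega,C)$. The only genuinely delicate point is the inductive claim that $\eta_n$ is a well-defined algebra \emph{iso}morphism—precisely, that passing to the symmetric algebra mirrors the commutativity of $\tprod$ at the root and that reordering branches is the sole ambiguity in factoring a tree—but this is routine bookkeeping once the two recursive structures are aligned.
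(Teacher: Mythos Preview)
Your proposal is correct and follows essentially the same approach as the paper: the paper's proof is precisely the preceding construction of the filtered bijection $\eta$ between $\algw(\Omega,C)$ and $\algt(\Omega,C)$, with the theorem stated as a summary. You have simply made explicit the two compatibilities ($\eta\,\opw_\omega=\opt_\omega\,\eta$ and $\eta\circ j_C=i_C$) and the transport of the universal property, which the paper leaves implicit.
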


\subsection{Free operated algebras in the relative context}
\mlabel{ss:relfreeop}

\subsubsection{General discussion}
The notion of an (associative) algebra is a relative notion of a ring in the sense that an algebra is a ring $R$ together with a ring homomorphism $\bk\to R$, called the structure map. A similar notion is fundamental in differential algebra where a differential algebra usually means a differential ring $(R,D)$ together with a differential ring homomorphism $(A,d)\to (R,D)$ from a base differential ring $(A,d)$.

Since an integral equation also has its coefficient functions equipped with preassigned integral operators, we give a relative notion of operated algebras.

\begin{defn}
For a given $\Omega$-operated $\bfk$-algebra $(A,\mrho_\Omega)$, an {\bf $(A,\mrho_\Omega)$-algebra} is an $\Omega$-operated $\bfk$-algebra $(R,P_\Omega)$ together with a homomorphism (called the {\bf structure map}) $j_R: (A,\mrho_\Omega) \lra (R,P_\Omega)$ of $\Omega$-operated $\bfk$-algebras.
A homomorphism $f:(R,P_\Omega)\to (S,Q_\Omega)$ of $(A,\mrho_\Omega)$-algebras is a homomorphism of $\Omega$-operated $\bfk$-algebras that is compatible with the structure maps:
$f j_R=j_S$.
\end{defn}

We next construct the free objects in the category of $(A,\mrho_\Omega)$-algebras.  Let $B$ be a $\bfk$-augmented algebra in the sense that there is a direct sum decomposition $$B=\bfk\oplus B^+$$ of $\bfk$-subalgebras. In other words, $B$ is the unitization of a $\bfk$-algebra $B^+$.
Then
$$ C=A\oplus (A\ot B^+)$$
is an {\bf $A$-augmented algebra} in the sense that there is a direct sum decomposition $C=A\oplus C_A^+$.

\begin{defn}
Let $\Omega$ be a nonempty set. Let $(A,\mrho_\Omega)$ be an  $\Omega$-operated $\bfk$-algebra and let $B$ be a $\bfk$-augmented algebra. The {\bf free $\Omega$-operated $\bfk$-algebra} over $B$ with coefficients in $(A,\mrho_\Omega)$ (or simply the {\bf free $(A,\mrho_\Omega)$-algebra} over $B$) is an $(A,\mrho_\Omega)$-algebra $(\fopab,\ropw_\Omega)$ together with an algebra homomorphism $i_B:B\lra \fopab$ satisfying the universal property that, for any $(A,\mrho_\Omega)$-algebra $(R,P_\Omega)$ and algebra homomorphism $f:B\to R$, there is a unique $(A,\mrho_\Omega)$-algebra homomorphism $\free{f}: (\fopab,\ropw_\Omega) \to (R,P_\Omega)$ such that $\free{f}\, i_B=f$.
\mlabel{de:freealg}
\end{defn}

\begin{remark} \mlabel{rk:opalg2}
As our primary interest, taking $B$ to be $\bfk[Y]$ for a set $Y$ of variable functions and replacing the algebra homomorphism from $B$ by a map from $Y$, we obtain the free $(A,\mrho_\Omega)$-algebras over the set $Y$.
However the more general $B$ allows non-algebraic functions in the variable functions to be included as noted in Remark~\mref{rk:opalg}.
See also Remark~\mref{rk:opalg3}.
\end{remark}

\subsubsection{Construction by bracketed words}

To construct the free $(A,\mrho_\Omega)$-algebra over a $\bfk$-augmented algebra $B$, we  denote
$$C:=A\ot B$$
and define a direct subsystem $\{\ralgw_n\,|\, n\geq 0\}$ of $\algw_n=\{\algw(C)_n\,|\, n\geq 0\}$. Each $\ralgw_n$ is an $A$-augmented algebra with $\ralgw_n=A\oplus \ralgw^+_n$.

First take
$$\ralgw_0:=\algw_0=C, \quad \ralgw^+_0:=C^+_A:=A\ot B^+,$$
and
$$\ralgw_1:=C\ot S(\lc \ralgw^+_0\rc_\Omega)=C\oplus (C\ot S^+(\lc \ralgw^+_0\rc_\Omega)) =A\oplus C^+_A \oplus (C\ot S^+(\lc \ralgw^+_0\rc_\Omega)),$$
$$
 \ralgw^+_1:=C^+_A \oplus (C\ot S^+(\lc \ralgw^+_0\rc_\Omega)).$$
Then we have the natural inclusions
$$\ralgw_0:=C\hra \ralgw_1, \quad \ralgw^+_0\hra \ralgw^+_1.$$

For the inductive step, let $n\geq 1$ be given and assume the data of algebras $\ralgw_k, \ralgw^+_k, 1\leq k\leq n,$ with $\ralgw_k=A\oplus \ralgw^+_k$, and injective algebra homomorphisms

\begin{equation}
i_{n-1,n}:\ralgw_{n-1}\to \ralgw_n, \quad i^+_{n-1,n}:\ralgw^+_{n-1}\to \ralgw^+_n.
\mlabel{eq:inclu2}
\end{equation}

We then define
\begin{equation}
\begin{aligned}
\ralgw_{n+1}&:=C\ot S(\lc \ralgw^+_n\rc_\Omega)
=A\oplus C^+_A\oplus (C\ot S^+(\lc \ralgw^+_n\rc_\Omega)), \\
\ralgw^+_{n+1}&:=C^+_A\oplus (C\ot S^+(\lc \ralgw^+_n\rc_\Omega)).
\end{aligned}
\mlabel{eq:wrec}
\end{equation}

The natural injection $i^+_{n-1,\,n}:\ralgw^+_{n-1}\hra \ralgw^+_n$ in Eq.~(\mref{eq:inclu2}) induces the natural injections
$$\lc \ralgw^+_{n-1} \rc_\Omega \hookrightarrow \lc \ralgw^+_n \rc_\Omega$$
and
$$ S(\lc \ralgw^+_{n-1}\rc_\Omega) \hookrightarrow S(\lc \ralgw^+_n\rc_\Omega),$$
which then induce
$$ i_{n,\,n+1}:\ralgw_n:=C\ot S(\lc \ralgw^+_{n-1}\rc_\Omega)  \hookrightarrow C\ot S\big(\lc \ralgw^+_n \rc_\Omega\big)=:\ralgw_{n+1},$$
$$ i^+_{n,\,n+1}:\ralgw^+_n:=C^+_A\oplus \big(C\ot S^+( \lc \ralgw^+_{n-1} \rc_\Omega)\big) \hookrightarrow C^+_A\oplus \big(C\ot S^+( \lc \ralgw^+_n \rc_\Omega)\big)=:\ralgw^+_{n+1}.$$
This completes the inductive construction of the direct systems. Finally we define the direct limits of algebras
$$ \ralgw :=\ralgw(\Omega,A\ot B):=\lim_{\longrightarrow}\ralgw_{n}=\bigcup_{n\geq 0}\ralgw_{n},\quad \ralgw^+:=\lim_{\longrightarrow}\ralgw^+_{n}=\bigcup_{n\geq 0}\ralgw^+_{n}.$$
As a direct limit of $\bk$-algebras, $\ralgw$ and $\ralgw^+$ are $\bk$-algebras.

For each $\omega\in \Omega$, $\lc \ralgw^+_n\rc_\omega$ is contained in $\ralgw^+_{n+1}$. Also, taking direct limits on the two sides in the equality $\ralgw_n=A\oplus \ralgw_n^+$, we obtain the equality
$\ralgw=A\oplus \ralgw^+$. Thus there is a natural $\bfk$-linear map
\begin{align*}
\ropw_{\omega}:\ralgw(\Omega,A\ot B)\rightarrow \ralgw(\Omega,A\ot B),\, u\mapsto \left\{\begin{array}{ll} \mrho_\Omega(u), &u\in A,\\
\lc u \rc_{\omega}, & u\in \ralgw^+, \end{array}
\right .
\, \omega\in \Omega.
\end{align*}
Consequently the pair $\big(\ralgw(\Omega,A\ot B), \ropw_\Omega\big)$ is an $\Omega$-operated algebra.
Further, there is the obvious inclusion $j_\ralgw:(A,\mrho_\Omega)\hra (\ralgw(\Omega,A\ot B),\ropw_\Omega)$ of $\Omega$-operated algebras. Hence $(\ralgw(\Omega,A\ot B),\ropw_\Omega)$ is an $(A,\mrho_\Omega)$-algebra.

We prove that $\ralgw(\Omega,A\ot B)$ satisfies the desired universal property.

\begin{theorem}
Let $\Omega$ be a nonempty set and let $(A,\mrho_\Omega)$ be an $\Omega$-operated algebra. Let $B$ be a $\bfk$-augmented algebra and let $i_{B}: B\hra \ralgw(\Omega,A\ot B)$ be the natural injection.
The pair $(\ralgw(\Omega,A\ot B),\, \ropw_\Omega)$ together with $i_B$ is the free $(A,\mrho_\Omega)$-algebra over $B$ in the sense of Definition~\mref{de:freealg}.
\mlabel{thm:freea}
\end{theorem}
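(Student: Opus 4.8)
The plan is to verify the universal property directly, exploiting the recursive structure of $\ralgw(\Omega,A\ot B) = \bigcup_{n\ge0}\ralgw_n$ and reducing the relative case to a bootstrapped induction on the filtration degree $n$. So let $(R,P_\Omega)$ be an $(A,\mrho_\Omega)$-algebra with structure map $j_R\colon(A,\mrho_\Omega)\to(R,P_\Omega)$, and let $f\colon B\to R$ be any $\bfk$-algebra homomorphism. Since $C=A\ot B$, the maps $j_R$ and $f$ assemble into an algebra homomorphism $\free f_0\colon C=\ralgw_0\to R$ given on pure tensors by $a\ot b\mapsto j_R(a)f(b)$; this is well-defined because $R$ is commutative and $j_R$, $f$ are algebra maps. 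This handles the base case $n=0$.

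For the inductive step, suppose an algebra homomorphism $\free f_n\colon\ralgw_n\to R$ has been constructed which is compatible with the structure map on $A\subseteq\ralgw_n$ and which intertwines $\ropw_\omega$ with $P_\omega$ on lower filtration pieces (more precisely, satisfies $\free f_n\circ\ropw_\omega = P_\omega\circ\free f_{n-1}$ on $\ralgw^+_{n-1}$, together with $\free f_n\circ\ropw_\omega = P_\omega\circ j_R$ on $A$). The linear map $\free f_n$ restricted to $\ralgw^+_n$ induces, for each $\omega\in\Omega$, a linear map $\lc\ralgw^+_n\rc_\omega\to R$ sending $\lc u\rc_\omega\mapsto P_\omega(\free f_n(u))$; by the universal property of the symmetric algebra, these extend to an algebra homomorphism $S(\lc\ralgw^+_n\rc_\Omega)\to R$, and then, tensoring with $\free f_0$ on the coefficient factor $C$, to an algebra homomorphism $$\free f_{n+1}\colon\ralgw_{n+1}=C\ot S(\lc\ralgw^+_n\rc_\Omega)\lra R.$$ One checks directly from Eq.~\meqref{eq:wrec} and Eq.~\meqref{eq:inclu2} that $\free f_{n+1}$ restricts to $\free f_n$ on $\ralgw_n$ (here the natural injection $\lc\ralgw^+_{n-1}\rc_\Omega\hookrightarrow\lc\ralgw^+_n\rc_\Omega$ is exactly what makes the triangle commute), and that the new map again has the intertwining property one filtration level up. Passing to the direct limit over $n$ yields an algebra homomorphism $\free f\colon\ralgw(\Omega,A\ot B)\to R$; by construction $\free f\, i_B=f$, $\free f\, j_\ralgw=j_R$, and $\free f\circ\ropw_\omega=P_\omega\circ\free f$ for all $\omega\in\Omega$ — the last identity because the definition of $\ropw_\omega$ splits along $\ralgw=A\oplus\ralgw^+$ precisely as $\mrho_\Omega$ on $A$ (matched by $j_R$ being an operated homomorphism) and as $\lc\cdot\rc_\omega$ on $\ralgw^+$ (matched by the recursive construction).

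For uniqueness, suppose $g\colon(\ralgw(\Omega,A\ot B),\ropw_\Omega)\to(R,P_\Omega)$ is any $(A,\mrho_\Omega)$-algebra homomorphism with $g\, i_B=f$. Then $g$ agrees with $\free f$ on $A$ (both equal $j_R$, by the structure-map compatibility) and on $B$ (both equal $f$), hence on $C=A\ot B=\ralgw_0$ since both are algebra maps. Arguing by induction on $n$: any element of $\ralgw_{n+1}$ is a polynomial in elements of $C$ and elements of the form $\lc u\rc_\omega=\ropw_\omega(u)$ with $u\in\ralgw^+_n$; since $g$ and $\free f$ are algebra maps agreeing on $C$ by hypothesis and agreeing on each $\ropw_\omega(u)$ because $g(\ropw_\omega(u))=P_\omega(g(u))=P_\omega(\free f(u))=\free f(\ropw_\omega(u))$ by the inductive hypothesis and the intertwining property of both maps, we get $g=\free f$ on $\ralgw_{n+1}$. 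Taking the union over $n$ gives $g=\free f$ on all of $\ralgw(\Omega,A\ot B)$.

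The main obstacle is bookkeeping rather than conceptual: one must be careful that the recursion is genuinely well-posed, i.e. that the symmetric-algebra extension at stage $n+1$ uses only $\free f_n$ (defined on $\ralgw_n\supseteq\ralgw^+_n$) and that the compatibility square with the transition map $i_{n,n+1}$ closes up. The only subtle point is that the operator-intertwining property must be carried along as part of the inductive hypothesis in the slightly shifted form "$\free f_{n+1}\circ\ropw_\omega=P_\omega\circ\free f_n$ on $\ralgw^+_n$ and on $A$" — it cannot be checked purely at a single stage, because $\ropw_\omega$ raises filtration degree. Once this strengthened hypothesis is set up correctly, every verification reduces to the defining relations of $S(-)$, the splitting $\ralgw=A\oplus\ralgw^+$, and the fact that $j_R$ is an operated homomorphism; no new ideas beyond those already used for Theorem~\mref{thm:freeaab} are needed, and indeed that theorem is the special case $B=\bfk$ (so $C=A$, $C^+_A=0$).
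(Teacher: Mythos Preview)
Your proof is correct and follows essentially the same inductive strategy as the paper's: define $\free f_0=j_R\ot f$ on $C$, extend to each $\ralgw_{n+1}$ via the universal property of $S(\lc\ralgw^+_n\rc_\Omega)$, pass to the direct limit, and verify the operated and structure-map compatibilities on the two summands $A$ and $\ralgw^+$ separately; your uniqueness argument is more explicit than the paper's one-line appeal to the construction. One small correction to your closing remark: Theorem~\mref{thm:freeaab} is \emph{not} the special case $B=\bfk$ of this theorem --- setting $B=\bfk$ gives $C^+_A=0$ and collapses the relative construction to $A$ itself, whereas Theorem~\mref{thm:freeaab} builds the absolute free $\Omega$-operated algebra over an arbitrary $C$ with no pre-existing operators on the base.
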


\begin{proof}
Let $(R,P_\Omega)$ be an $(A,\mrho_\Omega)$-algebra with the structure map $j_R: (A,\mrho_\Omega)\to (R,P_\Omega)$ which is an $\Omega$-operated algebra homomorphism. Let $f:B\to R$ be an algebra homomorphism.
We will define the desired $(A,\mrho_\Omega)$-algebra homomorphism
$$\free{f}: (\ralgw(\Omega,A\ot B),\ropw_\Omega) \to (R,P_\Omega)$$
from a sequence of algebra homomorphisms
$$
f_n: \ralgw_n \to R, \quad n\geq 0,$$
that is compatible with the direct system $\{\ralgw_n,i_{n,n+1}\}_{n\geq 0}$.

Again taking $C:=A\ot B$, first define
$$ f_0=j_R\ot f: \ralgw_0:=C=A\ot B \to R.$$
Then define
$$ \lc f_0\rc: \lc \ralgw^+_0\rc_\omega =\lc C^+_A\rc_\omega \to R$$
by $\lc f_0\rc (\lc u\rc_\omega):= P_\omega (f_0(u))$, which then together with $f_0:C\to R$ induces a unique algebra homomorphism
$$ f_1: \ralgw_1=C\ot S(\lc \ralgw^+_0\rc_\Omega)\to R.$$
We also have $f_1\Big|_{\ralgw_0}=f_0$.

Inductively, for $n\geq 1$, suppose that $f_i: \ralgw_i\to R, i\leq n,$ have been defined such that $f_i\Big|_{\ralgw_{i-1}}=f_{i-1}, i\geq 1$.
We then define
$$\lc f_n\rc: \lc \ralgw^+_n\rc_\Omega \to R, \quad \lc u\rc_\omega \mapsto P_\omega(f_n(u)), \quad u\in \ralgw^+_n, \omega\in \Omega.$$
Then together with the map $f_0:C\to R$, we obtain an algebra homomorphism
$$ f_{n+1}: \ralgw_{n+1}=C\ot S(\lc \ralgw^+_n\rc_\Omega) \to R.$$
For $u\in \ralgw^+_{n-1}$, we have
$$ f_{n+1}(\lc u\rc_\omega)=\lc f_n\rc (\lc u\rc_\omega) = P_\omega (f_n(u))=P_\omega (f_{n-1}(u))=\lc f_{n-1}\rc(\lc u\rc_\omega)=f_n(\lc u\rc_\omega).$$
It then follows that $f_{n+1}\Big|_{\ralgw_n}=f_n$.
Therefore, we obtain an algebra homomorphism
$$ \free{f}:= \dirlim f_n: \ralgw(\Omega,A\ot B) =\dirlim \ralgw_n \to R.$$

To check that $\free{f}$ is an $(A,\mrho_\Omega)$-algebra homomorphism, for $a\in A$, we have
$$ \free{f}j_\ralgw(a)=\free{f}(a)=f_0(a)=j_R(a).$$
To see that $\free{f}$ is an operated algebra homomorphism, for $u\in \ralgw^+= \bigcup_{n\geq 0} \ralgw^+_n$ and hence $u\in \ralgw^+_n$ for some $n\geq 0$, by the definition of $\free{f}=\dirlim f_n$, we have
$$ \free{f} \ropw_\omega(u)=\free{f}(\lc u\rc_\omega) = f_{n+1}(\lc u\rc_\omega) = \lc f_n\rc (\lc u\rc_\omega) = P_\omega f_n(u) =P_\omega \free{f}(u).$$
Further, for $a\in A$, we have
$$\free{f}\ropw_\omega(a)=\free{f}(\mrho_\Omega(a)) =j_R(\mrho_\Omega(a))=P_\omega j_R(a)=P_\omega \free{f} j_\ralgw(a)
=P_\omega \free{f}(a).$$
This proves the existence of $\free{f}$.

Finally, from the construction of $\free{f}$, it is the only way to define an $(A,\mrho_\Omega)$-algebra homomorphism such that $\free{f}  i_B = f$. This completes the proof.
\end{proof}
\subsubsection{Construction by rooted nonplanar forests}

Again take $C:=A\ot B$ for a $\bfk$-augmented algebra $B$, in particular $B=\bfk[Y]$ for a set $Y$.
We consider the submodule $\ralgt(\Omega,C)$ of $\algt(\Omega,C)$ spanned by vertex-edge decorated rooted trees $\rsett(\Omega,C)\subseteq \sett(\Omega,C)$ for which the leaf vertices are decorated by $C^+_A:=A\ot B^+$. For the one-vertex tree, the vertex is not taken to be a leaf and hence can be decorated by any element in $C$. Then $\rsett(\Omega,C)=\{\ctone{a}\,|\,a\in A\} \sqcup \rsett^+$, where $\rsett^+$ consists of $\ctone{a}$ with $a\in C^+_A$ and trees of height at least one with leaf vertices decorated by $C^+_A$. Then $\ralgt(\Omega,C)$ is closed under the \graftprod $\tprod$. Further, for $\omega\in \Omega$, define $\ropt_\omega$ on $\rsett(\Omega,C)$ by
$$ \ropt_\omega(T)=\left\{\begin{array}{ll}
\ctone{\mrho_\Omega(a)}\ \ \ \ \,  , & T=\ctone{a}, a\in A,\\
\opt_\omega(T), & T\in \rsett^+(\Omega,C).
\end{array} \right .$$

Then the bijective $(A,\mrho_\Omega)$-algebra homomorphism $\eta: \algw(\Omega,C)\to \algt(\Omega,C)$ restricts to a bijection
$\eta: \ralgw(\Omega,C)\to \ralgt(\Omega,C)$. Thus we conclude

\begin{coro}
Let $(A,\mrho_\Omega)$ be an $\Omega$-operated algebra and let $B$ be a $\bfk$-augmented algebra. The pair $((\ralgt(\Omega,A\ot B),\tprod),\ropt_\Omega)$ is the free $(A,\mrho_\Omega)$-algebra over $B$.
\mlabel{co:freet}
\end{coro}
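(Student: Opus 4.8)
The plan is to deduce Corollary~\ref{co:freet} from Theorem~\ref{thm:freea} by transporting the universal property along the bijection $\eta$ constructed in Section~\ref{ss:freeop}. Recall that $\eta:\algw(\Omega,C)\to\algt(\Omega,C)$ was built as the direct limit of algebra isomorphisms $\eta_n:\algw_n\to\algt_n$ and is a bijective $\Omega$-operated algebra homomorphism intertwining $\opw_\omega$ with $\opt_\omega$. Taking $C=A\ot B$ for a $\bfk$-augmented algebra $B$, the key point is to show that $\eta$ carries the relative subalgebra $\ralgw(\Omega,C)$ bijectively onto $\ralgt(\Omega,C)$, that it respects the splittings $\ralgw=A\oplus\ralgw^+$ and $\ralgt=\langle\ctone{a}\mid a\in A\rangle\oplus\langle\rsett^+\rangle$, and that it commutes with the operators $\ropw_\Omega,\ropt_\Omega$, with the structure maps from $(A,\mrho_\Omega)$, and with the embeddings of $B$.

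First I would introduce the level filtration $\ralgt_n:=\ralgt(\Omega,C)\cap\algt_n$, the span of the trees in $\rsett(\Omega,C)$ of height at most $n$, so that $\ralgt(\Omega,C)=\bigcup_{n\ge0}\ralgt_n$, paralleling $\ralgw(\Omega,C)=\bigcup_{n\ge0}\ralgw_n$ from Eq.~\eqref{eq:wrec}. Then I would prove $\eta_n(\ralgw_n)=\ralgt_n$ by induction on $n$. For $n=0$ both sides are the span of $\{\ctone{a}\mid a\in C\}$, since the unique vertex of a one-vertex tree is not counted as a leaf. For the inductive step, use the explicit formula~\eqref{eq:phidef}: a pure tensor $u=c\ot(\lc u_1\rc_{\omega_1}\cdots\lc u_k\rc_{\omega_k})$ with $c\in C$ and $u_i\in\ralgw^+_n$ has $\eta_{n+1}(u)=\ctone{c}\tprod\opt_{\omega_1}(\eta_n(u_1))\tprod\cdots\tprod\opt_{\omega_k}(\eta_n(u_k))$; each grafted factor $\opt_{\omega_i}(\eta_n(u_i))$ has height at least one, so its leaves are genuine non-root leaves, and the membership $u_i\in\ralgw^+_n$ corresponds, by the inductive hypothesis and the definition of $\rsett^+$, exactly to those leaves being decorated by $C^+_A=A\ot B^+$, while the common root carries the arbitrary $c\in C$. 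Reading this equivalence in both directions (and using that $\ralgw^+_{n+1}=C^+_A\oplus(C\ot S^+(\lc\ralgw^+_n\rc_\Omega))$ and that $\eta_{n+1}$ maps $C^+_A\subseteq C$ onto the one-vertex trees $\ctone{a}$, $a\in C^+_A$) gives $\eta_{n+1}(\ralgw_{n+1})=\ralgt_{n+1}$ together with the matching of the $A$- and $+$-summands; passing to the direct limit yields the bijection $\eta:\ralgw(\Omega,C)\to\ralgt(\Omega,C)$ with the claimed splitting.

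Next I would verify that this restricted $\eta$ is an isomorphism of $(A,\mrho_\Omega)$-algebras sending $i_B$ to $i_B$. It is an algebra homomorphism because $\ralgt(\Omega,C)$ is closed under $\tprod$; it intertwines the operators because on $\ralgw^+$ this is the restriction of the already-established identity $\eta\opw_\omega=\opt_\omega\eta$, while on the summand $A$ both operators are by design equal to $\mrho_\Omega$ on decorations, so $\eta\ropw_\omega(a)=\ctone{\mrho_\Omega(a)}=\ropt_\omega(\ctone{a})=\ropt_\omega\eta(a)$; it commutes with the structure maps $(A,\mrho_\Omega)\to\ralgw$ and $(A,\mrho_\Omega)\to\ralgt$, both being ``$a\mapsto\ctone{a}$''; and $\eta\circ i_B=i_B$ on the copy of $B$. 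Finally, an isomorphism of $(A,\mrho_\Omega)$-algebras carrying $i_B$ to $i_B$ transports the universal property of Definition~\ref{de:freealg} verbatim, so Theorem~\ref{thm:freea} gives that $((\ralgt(\Omega,A\ot B),\tprod),\ropt_\Omega)$ together with $i_B$ is the free $(A,\mrho_\Omega)$-algebra over $B$.

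I expect the main obstacle to be the bookkeeping in the inductive step — making precise, and checking, that $\eta$ preserves the leaf-decoration condition. The subtlety is the asymmetry already built into the definitions: the root of a one-vertex tree is not regarded as a leaf (hence may carry any element of $C$), whereas every other leaf must carry an element of $C^+_A$; this must be matched against the decomposition $\ralgw_{n+1}=A\oplus C^+_A\oplus(C\ot S^+(\lc\ralgw^+_n\rc_\Omega))$ and the fact that the bracketed arguments in~\eqref{eq:wrec} are drawn from $\ralgw^+_n$ rather than $\ralgw_n$. Once this correspondence is pinned down, everything else is formal.
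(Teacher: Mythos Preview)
Your proposal is correct and follows essentially the same approach as the paper: the paper simply states that the bijective operated algebra homomorphism $\eta:\algw(\Omega,C)\to\algt(\Omega,C)$ restricts to a bijection $\ralgw(\Omega,C)\to\ralgt(\Omega,C)$ and then invokes Theorem~\ref{thm:freea}. Your write-up fills in the details the paper omits (the inductive verification that $\eta_n(\ralgw_n)=\ralgt_n$, the matching of the $A$- and $+$-summands, and the compatibility with $\ropw_\Omega$, $\ropt_\Omega$, the structure maps, and $i_B$), but the underlying argument is identical.
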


Applying Theorem~\mref{thm:freea} and Corollary~\mref{co:freet} to our subject of study, any integral equation can be regarded as an element of $\ralgt(\Omega,C)$ or $\ralgw(\Omega,C)$. Here $\Omega$ is the set of integral operators, including the integral limits and kernels. So for example $f(x)\mapsto \int_0^{x^2} K(x,t) f(t) \,dt$ is taken as one integral operator; $C=A\ot B$ is the algebra of coefficient functions $A$, such as $C(\RR)$, together with the variable functions $B$. In particular, let $Y$ be a set of variable functions. Then $C=C(\RR)[Y]$. By enlarging this $C$, we can also consider the case when transcendental functions of $Y$ are involved.

As the following examples illustrate, we have a three way dictionary among $\ralgt(\Omega,C)$, $\ralgw(\Omega,C)$ and integral expressions which, when set to zero, become integral equations.
\vspace{-.3cm}
{\tiny
\begin{equation}
\begin{array}{ccccccc}
\ctone{a}&
\cttwo{a}{b}{\alpha}&
\ctthree{a}{b}{c}{\alpha}{\beta} &
\ctfour{a}{b}{c}{d}{\alpha}{\beta}{\gamma} &
\ctfourb{a}{b}{c}{d}{\alpha}{\beta}{\gamma} &
\ctfourc{a}{b}{c}{d}{\alpha}{\beta}{\gamma} &
\ctfive{a}{b}{c}{d}{e}{\alpha}{\beta}{\gamma}{\delta}\\
&&&&&&\\
a& a\ropw_\alpha(b) & a\ropw_\alpha(b)\ropw_\beta(c)&
a\ropw_\alpha(b)\ropw_\beta(c)\ropw_\gamma(d)&
a\ropw_\alpha(b)\ropw_\beta\Big( c\ropw_\gamma(d)\Big)&
a\ropw_\alpha\Big(b\ropw_\beta(c)\ropw_\gamma(d)\Big)&
a\ropw_\alpha(b)\ropw_\beta \Big(c\ropw_\gamma(d)\ropw_\delta(e)\Big)\\
&&&&&&\\
a & a(\int_\alpha b)& a(\int_\alpha b)(\int_\beta c)&
a(\int_\alpha b)(\int_\beta c)(\int_\gamma d)&
a(\int_\alpha b)\int_\beta\Big( c(\int_\gamma d)\Big)&
a\int_\alpha\Big(b(\int_\beta c)(\int_\gamma d)\Big)&
a(\int_\alpha b)\int_\beta \Big(c(\int_\gamma d)(\int_\delta e)\Big)
\end{array}
\notag\mlabel{eq:dict}
\end{equation}
}
Here $ \alpha, \beta, \gamma, \delta$ are in $\Omega$ and $a, b, c, d, e$ are in $ C$ and further in $C^+_A$ when decorating leaf vertices.

Now we can define the main notions of our study, including a formal definition of integral equations.

\begin{defn}\mlabel{de:eqn}
Let $(A,\mrho_\Omega)$ be an $\Omega$-operated algebra and $B$ be a $\bfk$-augmented algebra.
\begin{enumerate}
	\item
The $(A,\mrho_\Omega)$-algebra $\ralgw(\Omega,A\ot B)$ (resp. $\ralgt(\Omega,A\ot B)$) is called the {\bf operated polynomial algebra in words} (resp. {\bf trees}) with operator set $\Omega$,  coefficient algebra $A$ and variable algebra $B$. Elements (setting to zero) in $\ralgw(\Omega,A\ot B)$ and $\ralgt(\Omega,A \ot B)$ are called {\bf operator equations} in words (resp. trees).
\mlabel{it:eqng}
\item
Let $B=\bfk[Y]$ and hence $A\ot B=A[Y]$ where $Y$ is a set. Then $\ralgw(\Omega,A[Y])$ and $\ralgt(\Omega,A[Y])$ are called the {\bf operated polynomial algebra} with operator set $\Omega$, coefficient algebra $A$ and independent variables $Y$. Their elements (setting to zero) are called {\bf operator equations} with the same qualifications.
\mlabel{it:eqngy}
\item
\mlabel{it:eqny}
Further let $A=C(I)$ for an open interval $I\subseteq \RR$, $\mrho_\Omega$ a set of integral operators on $A$ and $Y$ a set of variable functions. Then $\ralgw(\Omega,C(I)[Y])$ and $\ralgt(\Omega,C(I)[Y])$ are called the {\bf integral polynomial algebra} with integral operator set $\Omega$, coefficient function space $C(I)$ and variable functions $Y$. Their elements (setting to zero) are called {\bf integral equations} with the same qualifications.
\end{enumerate}
\end{defn}

\begin{remark} \mlabel{rk:opalg3}
As noted in Remark~\mref{rk:opalg2}, taking $B$ to be an algebra containing non-polynomial functions of the variable functions $Y$, we can include more general operator equations than when $B$ is taken $A[Y]$. For example, the Thomas-Fermi equation in Eq.~\meqref{eq:tf}, rewritten in Eq.~\meqref{eq:tf2}, is not in $\ralgw(\Omega,C(I)[y])$, but is in $\ralgw(\Omega,C(I)\ot B)$ for $B=\RR[y,z]/(z^2-y^3)$.
\end{remark}

As a prototype of operator equations and integral equations thus defined, taking the operated algebra $(A,\mrho_\Omega)$ in Item~\meqref{it:eqngy} to be a differential algebra, we arrive at a notion of differential equations.

\begin{defn} \mlabel{de:diffeqn}
Let $(A,d_0)$ be a differential algebra. In particular, let $A$ be the differential algebra of smooth functions on an open interval $I\subseteq \RR$ with the usual derivation. The operated algebra $\ralgw(\{d_0\},A[Y])$ (resp. $\ralgt(\{d_0\},A[Y])$) is called the {\bf differential polynomial algebra over $Y$}. Its elements (setting to zero) are called differential equations in $Y$.
\end{defn}

These notions of the differential polynomial algebra and a differential equation is apparently more general than the usual notions in Eq.~\meqref{eq:diffpoly} in the usual sense of differential algebra~\mcite{Ko,PS}. However, as to be shown in Proposition~\mref{pp:diffeq}, the two sets of notions are equivalent.

\subsection{Evaluations, solutions and equivalence of operator equations}

As defined in Definition~\mref{de:eqn}, for a given element $\phi\in \ralgw(\Omega,A \ot B)$, the equation
$$ \phi=0$$
is called an operator equation.
We are interested in the solutions of $\phi$ in the operated algebra $(A,\mrho_\Omega)$ (or its extension), when the operator $\ropw_\omega$ on $\ralgw(\Omega,A \ot B)$ is taken to be the specific linear operator $\mrho_\omega$ on $A, \omega\in \Omega$. By the universal property of $\ralgw(\Omega,A \ot B)$, an algebra homomorphism $f_B:B\to A$ gives rise to a unique $\Omega$-operated algebra homomorphism, called an {\bf evaluation map}:
$$\free{f}:=\free{f}_B:=\eva\big|_{f_B, \ropw_\Omega\mapsto \mrho_\Omega}:
\ralgw(\Omega,A \ot B) \to A,$$
such that $f=\free{f} i_B$.
Of particular interest to us is when $B=\bfk[Y]$ is a polynomial algebra, so $f_B:B\to A$ is uniquely determined by a map $f_Y:Y\to A$, denoted by $Y\mapsto f_Y$.
Then the equation $\phi=0$ has a solution $f_Y$ when $\ropw_\omega$ are taken to be the operators $\mrho_\Omega$ precisely means
$$\eva\big|_{Y\mapsto f_Y, \ropw_\Omega\mapsto \mrho_\Omega}(\phi)=0.$$

Thus the operator equations $\phi=0$ for which $f_Y$ is a solution form the kernel $J_{\mrho_\Omega,f_Y}$ of $\free{f}$, which is an operated ideal of $\ralgw(\Omega,A[Y])$. For a given set of linear operators $\mrho_\Omega$ on $A$, as we run through all the evaluations of $Y$, consider the intersection
\begin{equation}
J_{\mrho_\Omega}:=\bigcap_{Y\mapsto f_Y} J_{\mrho_\Omega,f_Y}.
\mlabel{eq:kernalpha}
\end{equation}
Elements in $J_{\mrho_\Omega}$ are precisely the (algebraic) operator identities satisfied by elements of $A$ and the operators $\mrho_\Omega$. Thus we have
\begin{prop}\mlabel{pp:reduce}
 \begin{enumerate}
 \item
Elements in $\ralgw(\Omega,A[Y])\backslash J_{\mrho_\Omega}$ give operator equations that do not hold identically in $(A,\mrho_\Omega)$.
\item
Two operator equations $\phi=0$ and $\psi=0$ for $\phi, \psi\in \ralgw(\Omega,A[Y])$ have the same solution set in $(A,\mrho_\Omega)$ if the operators differ by an element in $J_{\mrho_\Omega}$.
\end{enumerate}
\end{prop}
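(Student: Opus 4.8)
The plan is to simply unwind the definitions: both parts follow at once from the fact that each evaluation map
$\free{f}_B = \eva\big|_{Y\mapsto f_Y,\ \ropw_\Omega\mapsto\mrho_\Omega}\colon \ralgw(\Omega,A[Y])\to A$
is a $\bfk$-linear (indeed $\Omega$-operated algebra) homomorphism, which is exactly what the universal property in Theorem~\mref{thm:freea} provides, together with the definition of $J_{\mrho_\Omega,f_Y}$ as its kernel and of $J_{\mrho_\Omega}$ as the intersection in Eq.~\meqref{eq:kernalpha}.

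First I would make precise the phrase ``$\phi=0$ holds identically in $(A,\mrho_\Omega)$'': it should mean that $\free{f}_B(\phi)=0$ for \emph{every} map $f_Y\colon Y\to A$, i.e. $\phi\in J_{\mrho_\Omega,f_Y}$ for every such $f_Y$, i.e. $\phi\in J_{\mrho_\Omega}$. With this reading, part~(i) is immediate by contraposition: if $\phi\notin J_{\mrho_\Omega}$, then by definition of the intersection there exists some $f_Y\colon Y\to A$ with $\phi\notin J_{\mrho_\Omega,f_Y}$, that is $\free{f}_B(\phi)\ne 0$, so the equation $\phi=0$ fails for that choice of variable functions and therefore does not hold identically.

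For part~(ii), suppose $\phi-\psi\in J_{\mrho_\Omega}$ and fix an arbitrary $f_Y\colon Y\to A$. Then $\phi-\psi\in J_{\mrho_\Omega}\subseteq J_{\mrho_\Omega,f_Y}=\ker\big(\free{f}_B\big)$, so $\bfk$-linearity of $\free{f}_B$ gives $\free{f}_B(\phi)=\free{f}_B(\psi)$. Hence $\free{f}_B(\phi)=0$ if and only if $\free{f}_B(\psi)=0$; in other words, $f_Y$ is a solution of $\phi=0$ precisely when it is a solution of $\psi=0$. Since $f_Y$ was arbitrary, the two equations have the same solution set in $(A,\mrho_\Omega)$.

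There is essentially no genuine obstacle here; the only point needing care is the translation of ``solution set'' and ``holds identically'' into statements about the evaluation maps $\free{f}_B$, after which everything is forced. (If one also wished to track solutions in $\Omega$-operated extensions $(A',\mrho'_\Omega)$ of $(A,\mrho_\Omega)$, one would instead form the intersection of the kernels over all evaluations into all such extensions; but as stated the proposition concerns solutions in $(A,\mrho_\Omega)$ itself, so the argument above suffices.)
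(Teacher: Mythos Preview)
Your proposal is correct and matches the paper's approach: the paper gives no explicit proof at all, simply stating the proposition immediately after observing that elements of $J_{\mrho_\Omega}$ are precisely the operator identities satisfied in $(A,\mrho_\Omega)$, so it treats both parts as immediate from the definitions of $J_{\mrho_\Omega,f_Y}$ and $J_{\mrho_\Omega}$---exactly as you have unwound them.
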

These properties lead us to define

\begin{defn}Let $(A,\mrho_\Omega)$ be an operated algebra and $Y$ be a set.
\begin{enumerate}
\item
Nonzero elements of $\ralgw(\Omega,A[Y])/J_{\mrho_\Omega}$ are called {\bf reduced operator polynomials} for $(A,\mrho_\Omega)$.
\item
Operated polynomials $\phi$ and $\psi$ (or their corresponding equations) in $\ralgw(\Omega,A[Y])$ are called {\bf equivalent} if $\phi-\psi$ is in $J_{\mrho_\Omega}$.
\end{enumerate}
\mlabel{de:intequiv}
\end{defn}
Even though it is usually not practical to display a canonical basis of the quotient $\ralgw(\Omega,\mrho_\Omega)/J_{\mrho_\Omega}$, it is possible to do so when the operated ideal $J_{\mrho_\Omega}$ is replaced by an operated subideal generated by defining identities of algebraically defined operators such as the differential operator (in differential algebra) or the Rota-Baxter operator, as we will show next.

We will consider the case when the operators $\mrho_\Omega$ are separable Volterra operators in the next section to obtain further properties of the integral polynomials and the corresponding integral equations, namely that any such integral equation is equivalent to an integral equation that is operator linear (Definition~\mref{de:oplin}).

As a prototype of this case, we first treat differential operators and differential equations.

\begin{prop} \mlabel{pp:diffeq}
Let $(A,\mrho_\Omega)$ be a differential algebra and let $Y$ be a set. Any differential polynomial in the sense of Definition~\mref{de:eqn}.\meqref{it:eqng}, that is, as an element of  $\ralgw(\{d\},A[Y])$, is equivalent to a differential polynomial in the usual sense of differential algebra and differential equation, that is, as an element of $A_\Omega\{Y\}$ in Eq.~\meqref{eq:diffpoly}.
\end{prop}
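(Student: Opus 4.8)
The plan is to identify the passage from the operated polynomial algebra $\ralgw(\{d\},A[Y])$ to the classical differential polynomial algebra $A_\Omega\{Y\}$ (with $\Omega=\{d\}$) as the quotient that forces the free operator $\ropw_d$ to obey the Leibniz rule, and then to check that this quotient map annihilates only relations that already lie in $J_{\mrho_\Omega}$. First I would record that $(A_\Omega\{Y\},d)$, with $d$ the derivation of Eq.~\meqref{eq:diffpoly} (which restricts to $\mrho_\Omega$ on $A$), is an $(A,\mrho_\Omega)$-algebra via the inclusion $A\hookrightarrow A_\Omega\{Y\}$. Applying the universal property of $\ralgw(\{d\},A[Y])$ as the free $(A,\mrho_\Omega)$-algebra over $\bfk[Y]$ (Theorem~\mref{thm:freea}) to the $\bfk$-algebra homomorphism $\bfk[Y]\to A_\Omega\{Y\}$ determined by $y\mapsto y$ yields a unique, surjective $(A,\mrho_\Omega)$-algebra homomorphism $\Theta\colon\ralgw(\{d\},A[Y])\to A_\Omega\{Y\}$ with $\Theta(a)=a$ for $a\in A$ and $\Theta(\ropw_d^{\,n}(y))=y^{(n)}$. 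In the other direction, since $A_\Omega\{Y\}=A[\,y^{(n)}\mid y\in Y,\ n\ge0\,]$ is the polynomial algebra over $A$ on the set $\{y^{(n)}\}$, the assignments $a\mapsto a$ and $y^{(n)}\mapsto\ropw_d^{\,n}(y)$ extend to an $A$-algebra homomorphism $\sigma\colon A_\Omega\{Y\}\to\ralgw(\{d\},A[Y])$, and by construction $\Theta\sigma=\id$; so $\sigma$ is injective and realizes $A_\Omega\{Y\}$ as a subalgebra of $\ralgw(\{d\},A[Y])$.

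The crux is to show $\ker\Theta\subseteq J_{\mrho_\Omega}$. I would fix an evaluation $f_Y\colon Y\to A$ and compare two homomorphisms $\ralgw(\{d\},A[Y])\to A$: the evaluation map $\eva_{f_Y}:=\eva\big|_{Y\mapsto f_Y,\ \ropw_\Omega\mapsto\mrho_\Omega}$, and the composite $\mathrm{ev}_{f_Y}\circ\Theta$, where $\mathrm{ev}_{f_Y}\colon A_\Omega\{Y\}\to A$ is the classical evaluation $a\mapsto a$, $y^{(n)}\mapsto\mrho_\Omega^{\,n}(f_Y(y))$. Both are $(A,\mrho_\Omega)$-algebra homomorphisms (for $\mathrm{ev}_{f_Y}$ this is the standard fact that evaluating a differential polynomial commutes with the derivation) and both send $y\mapsto f_Y(y)$, so by the uniqueness clause in the universal property they coincide; hence $\ker\Theta\subseteq\ker\eva_{f_Y}=J_{\mrho_\Omega,f_Y}$. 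Intersecting over all $f_Y$ and invoking Eq.~\meqref{eq:kernalpha} gives $\ker\Theta\subseteq J_{\mrho_\Omega}$.

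Granting this, the proposition follows: for any $\phi\in\ralgw(\{d\},A[Y])$ one has $\Theta\big(\phi-\sigma\Theta(\phi)\big)=0$, so $\phi-\sigma\Theta(\phi)\in\ker\Theta\subseteq J_{\mrho_\Omega}$, i.e.\ $\phi$ is equivalent in the sense of Definition~\mref{de:intequiv} to $\sigma\Theta(\phi)$, the copy inside $\ralgw(\{d\},A[Y])$ of the ordinary differential polynomial $\Theta(\phi)\in A_\Omega\{Y\}$.

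I expect the only delicate point to be the bookkeeping with the two universal properties in the middle step — verifying that $\Theta$, $\eva_{f_Y}$ and $\mathrm{ev}_{f_Y}$ are all honest $(A,\mrho_\Omega)$-algebra homomorphisms so that the uniqueness argument applies — while constructing $\sigma$ and the final deduction are routine. (A more computational alternative would bypass $\Theta$ and argue directly by induction on the bracket-nesting level in the recursion $\ralgw^+_{n+1}=C^+_A\oplus(C\ot S^+(\lc\ralgw^+_n\rc_\Omega))$, repeatedly using $\ropw_d(uv)-\ropw_d(u)v-u\,\ropw_d(v)\in J_{\mrho_\Omega}$ together with the identity $\ropw_d(a)=\mrho_\Omega(a)$ for $a\in A$ to push every bracket down onto the variables $y\in Y$; the map $\Theta$ just packages this reduction invariantly.)
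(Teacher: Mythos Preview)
Your argument is correct. The overall architecture is the same as the paper's: realize $A_\Omega\{Y\}$ as a quotient of $\ralgw(\{d\},A[Y])$, show the kernel lies in $J_{\mrho_\Omega}$, and conclude via a section. The paper carries this out by explicitly naming the operated ideal $I_{\rm diff}$ generated by the Leibniz expressions $\ropw_\alpha(uv)-\ropw_\alpha(u)v-u\,\ropw_\alpha(v)$, checking directly that every evaluation $\free{f}$ kills these generators (so $I_{\rm diff}\subseteq J_{\mrho_\Omega}$), and then invoking that $\ralgw(\Omega,A[Y])/I_{\rm diff}$ and $A_\Omega\{Y\}$ are both the free $(A,\mrho_\Omega)$-differential algebra over $Y$ to identify them. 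Your route instead defines $\Theta$ first and proves $\ker\Theta\subseteq J_{\mrho_\Omega}$ by the factorization $\eva_{f_Y}=\mathrm{ev}_{f_Y}\circ\Theta$ and uniqueness, never needing to identify $\ker\Theta$ with any explicitly generated ideal. This is a bit slicker, since you avoid the separate step of matching $I_{\rm diff}$ with $\ker\Theta$; on the other hand, the paper's formulation makes the role of the Leibniz identity visible and sets up an exact parallel with its later argument for the twisted Rota-Baxter case (Theorem~\ref{thm:oplin}), where the analogous ideal $I_{\rm TRB}$ plays the same role. Your parenthetical ``computational alternative'' is precisely the paper's approach.
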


\begin{proof}
Let $\phi=\phi(\ropw_\Omega,A,Y)$ in the free $(A,\mrho_\Omega)$-operated algebra $(\ralgw(\Omega,A[Y]),\ropw_\Omega)$.
Consider the defining relations of differential operators in a differential algebra:
\begin{equation}
\ropw_\alpha(uv)-\ropw_\alpha(u)v-u\ropw_\alpha(v), \tforall u, v\in \ralgw(\Omega,A[Y]), \alpha \in \Omega,
\mlabel{eq:diffrel}
\end{equation}
in $\ralgw(\Omega,A[Y])$.

Since $\mrho_\Omega$ are differential operators on $A$,
for any $f: Y\to A$,
the induced $(A,\mrho_\Omega)$-algebra homomorphism $\free{f}$ sends the expressions in Eq.~\meqref{eq:diffrel} to zero. Thus the operated ideal $I_{\rm diff}$ generated by these expressions is contained in the ideal $J_{\mrho_\Omega}$ defined in Definition~\mref{de:intequiv}.

By definition, the quotient $(A,\mrho_\Omega)$-algebra $\ralgw(\Omega,A[Y])/I_{\rm diff}$ is the free $(A,\mrho_\Omega)$-differential algebra. Thus by the fact that $A_\Omega\{Y\}$ in Eq.~\meqref{eq:diffpoly} is also the free $(A,\mrho_\Omega)$-differential algebra, we have the natural isomorphism
$$\ralgw(\Omega,A[Y])/I_{\rm diff} \cong A_\Omega\{Y\}$$
of differential algebras. In fact, the isomorphism (or its inverse) is given linearly by sending a differential monomial $(y,\prod_{\omega\in \Omega}\omega^{n_\omega})\in Y\times C(\Omega)$ in $A_\Omega\{Y\}=A[Y\times C(\Omega)]$ to the corresponding element $\prod_{\omega\in \Omega}\Pi_\omega^{n_\omega}(y)$ in $\ralgw(\Omega,A[Y])$. This means that, in particular, the element $\phi(\ropw_\Omega,A,Y)$ in $\ralgw(\Omega,A[Y])$ is congruent modulo $I_{\rm diff}$ to an element of $A_\Omega\{Y\}$. Since $I_{\rm diff}$ is contained in  $J_{\mrho_\Omega}$, by Definition~\mref{de:intequiv}, $\phi(\ropw_\Omega,A,Y)$ is equivalent to a differential equation in $\ralgw(\Omega,A[Y])$ of the form given in $A_\Omega\{Y\}$.
\end{proof}

\section{Separable Volterra equations and their linearity}
\mlabel{sec:sep}
In this section, we first construct free relative \mtrbas which serve as the universal context for separable Volterra equations. We then apply the construction to prove the operator linearity of separable Volterra equations.

\subsection{Construction of free relative \mtrbas}

We start with the related notions.
As the analogy of an algebra over a ring in the context of \mtrbas, we give

\begin{defn}
	Fix an
	$\Omega$-\mtrba $(A,\mrho_\Omega,\mfraka_\Omega)$. By an {\bf $(A,\mrho_\Omega,\mfraka_\Omega)$-\mtrba}, we mean an $\Omega$-\mtrba $(R,P_\Omega,\rtwist_{R,\Omega})$ with an $\Omega$-\mtrba homomorphism $\stmap:=\stmap_R:(A,\mrho_\Omega,\mfraka_\Omega)\to (R,P_\Omega,\rtwist_{R,\Omega})$. We also use $(R,P_\Omega,\rtwist_{R,\Omega},\stmap_R)$ to denote the data.
	For $(A,\mrho_\Omega,\mfraka_\Omega)$-\mtrbas $(R_1,P_{1,\Omega},\rtwist_{1,\Omega},\stmap_1)$ and  $(R_2,P_{2,\Omega},\rtwist_{2,\Omega},\stmap_2)$, an $\Omega$-\mtrba homomorphism $f:(R_1,P_{1,\Omega},\rtwist_{1,\Omega})\to(R_2,P_{2,\Omega},\rtwist_{2,\Omega})$ is called an {\bf $(A,\mrho_\Omega,\mfraka_\Omega)$-\mtrba homomorphism} if $\stmap_2=f \stmap_1$.
\end{defn}
We will determine the free objects in the category of $(A,\mrho_\Omega,\mfraka_\Omega)$-\mtrbas, defined precisely as follows.
\begin{defn}
	Given an $\Omega$-\mtrba $(A,\mrho_\Omega,\mfraka_\Omega)$ and an augmented $\bk$-algebra $B$, a {\bf free $(A,\mrho_\Omega,\mfraka_\Omega)$-\mtrba} over $B$, denoted by the quintuple $(\frmtrba(A,B),P_{F,\Omega},\rtwist_{F,\Omega},\stmap_F,i_B)$, is an $(A,\mrho_\Omega,\mfraka_\Omega)$-\mtrba $(\frmtrba(A,B),P_{F,\Omega}, \rtwist_{F,\Omega},\stmap_F)$ with a $\bk$-algebra homomorphism $i_B:B\rar \frmtrba(A,B)$ satisfying the following universal property.
	For any $(A,\mrho_\Omega,\mfraka_\Omega)$-\mtrba $(R,P_{R,\Omega},\rtwist_{R,\Omega},\stmap_R)$ and a $\bk$-algebra homomorphism $f:B\rar R$, there exists a unique $(A,\mrho_\Omega,\mfraka_\Omega)$-\mtrba homomorphism $\freet{f}:(\frmtrba(A,B),P_{B,\Omega},\rtwist_{F,\Omega},\stmap_F)\to (R,P_{R,\Omega},\rtwist_{R,\Omega},\stmap_R)$ such that $f=\freet{f} i_B$.
\end{defn}
\vspace{-.2cm}

In the case when $B$ is the polynomial algebra $\bfk[Y]$ on a set $Y$ or the symmetric algebra $S(V)$ over a $\bfk$-module $V$, we obtain the notions of a free $(A,\mrho_\Omega,\mfraka_\Omega)$-\mtrba over $Y$ or $V$.

When the twist $\mfraka_\Omega$ is trivial, namely $\mfraka_\omega=1_A$ for $\omega\in\Omega$, we recover the notion of a free $(A,\mrho_\Omega)$-MRBA recalled in Definition~\mref{de:rbr}. See~\mcite{GGZy} where the construction is also provided. We will apply this construction to obtain the free $(A,\mrho_\Omega,\mfraka_\Omega)$-\mtrba over $B$.

We first give some notations. For a linear operator $Q$ on an algebra $R$ and invertible $a\in R$, denote $Q^a(r):=aQ(a^{-1}r)$. Then for a family of operators $Q_\Omega$ and invertible elements $a_\Omega$, denote
$$Q_\Omega^{a_\Omega}:=\Big\{Q_\omega^{a_\omega}\,|\, \omega\in\Omega\Big\}, \quad
Q_\Omega^{a_\Omega^{-1}}:=\Big\{Q_\omega^{a_\omega^{-1}}\,|\, \omega\in\Omega\Big\}.$$

\begin{prop} \mlabel{pp:freerel}
Let $(A,\mrho_\Omega,\mfraka_\Omega)$ be a fixed
$\Omega$-\mtrba. Let $(A,\mrho_\Omega^{\mfraka_\Omega^{-1}})$ be the $\Omega$-MRBA by Proposition~\mref{pp:twrbo}. Let $B=\bfk\oplus B^+$ be an augmented $\bk$-algebra.
Let $(\frmrba(A,B),P_{F,\Omega},\stmap_F,i_B)$ with $i_B:B\to \frmrba(A,B)$ be the free $(A,\mrho_\Omega^{\mfraka_\Omega^{-1}})$-MRBA over $B$.\footnote{Only the universal property is needed here. The construction will be recalled next.} Denote $\mfraka_{F,\Omega}:=j_F(\mfraka_\Omega)\subseteq \frmrba(A,B)$. Then the quintuple $(\frmrba(A,B),P_{F,\Omega}^{\mfraka_{F,\Omega}},\mfraka_{F,\Omega},\stmap_F,i_B)$ is the free $(A,\mrho_\Omega,\mfraka_\Omega)$-MTRBA over $B$.
\end{prop}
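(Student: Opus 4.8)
The plan is to transport the universal property of the free relative MRBA $\frmrba(A,B)$ across the twisting correspondence of Proposition~\mref{pp:twrbo}, so that essentially no computation beyond that proposition is required. Recall that for an invertible element $a$ of an algebra the operations $Q\mapsto Q^{a}$ and $Q\mapsto Q^{a^{-1}}$ on linear operators are mutually inverse, and that the operator $\check{P}_\omega=\mfraka_\omega^{-1}P_\omega\mfraka_\omega$ of Proposition~\mref{pp:twrbo} is precisely $P_\omega^{\mfraka_\omega^{-1}}$. First I would check that $(\frmrba(A,B),P_{F,\Omega}^{\mfraka_{F,\Omega}},\mfraka_{F,\Omega})$ is an $\Omega$-\mtrba: each $\mfraka_{F,\omega}=\stmap_F(\mfraka_\omega)$ is invertible because $\stmap_F$ is a unital algebra homomorphism and $\mfraka_\omega$ is invertible in $A$, and the ``check'' of $P_{F,\Omega}^{\mfraka_{F,\Omega}}$ relative to the twist $\mfraka_{F,\Omega}$ is $P_{F,\Omega}$, which satisfies the (weight-zero) matching Rota-Baxter identity since $\frmrba(A,B)$ is an MRBA, so Proposition~\mref{pp:twrbo} applies. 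Next I would verify that the structure map $\stmap_F:A\to\frmrba(A,B)$ of the free relative MRBA is also an $\Omega$-\mtrba structure map $(A,\mrho_\Omega,\mfraka_\Omega)\to(\frmrba(A,B),P_{F,\Omega}^{\mfraka_{F,\Omega}},\mfraka_{F,\Omega})$: the twist condition $\stmap_F(\mfraka_\omega)=\mfraka_{F,\omega}$ is the definition, while the operator condition $\stmap_F\,\mrho_\omega=P_{F,\omega}^{\mfraka_{F,\omega}}\,\stmap_F$ follows in one line from $\stmap_F\circ\mrho_\omega^{\mfraka_\omega^{-1}}=P_{F,\omega}\circ\stmap_F$ and the multiplicativity of $\stmap_F$. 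This makes $(\frmrba(A,B),P_{F,\Omega}^{\mfraka_{F,\Omega}},\mfraka_{F,\Omega},\stmap_F,i_B)$ an $(A,\mrho_\Omega,\mfraka_\Omega)$-\mtrba.

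For the universal property, let $(R,P_{R,\Omega},\rtwist_{R,\Omega},\stmap_R)$ be an $(A,\mrho_\Omega,\mfraka_\Omega)$-\mtrba and $f:B\to R$ an algebra homomorphism. I would first run $R$ through the correspondence in the other direction: by Proposition~\mref{pp:twrbo} the pair $(R,\check{P}_{R,\Omega})$ with $\check{P}_{R,\omega}=P_{R,\omega}^{\rtwist_{R,\omega}^{-1}}$ is an $\Omega$-MRBA, and a one-line computation using $\stmap_R(\mfraka_\omega)=\rtwist_{R,\omega}$ together with $\stmap_R\,\mrho_\omega=P_{R,\omega}\,\stmap_R$ shows that $\stmap_R$ is an MRBA structure map $(A,\mrho_\Omega^{\mfraka_\Omega^{-1}})\to(R,\check{P}_{R,\Omega})$; hence $(R,\check{P}_{R,\Omega},\stmap_R)$ is an $(A,\mrho_\Omega^{\mfraka_\Omega^{-1}})$-MRBA. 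The universal property of $\frmrba(A,B)$ then supplies a unique $(A,\mrho_\Omega^{\mfraka_\Omega^{-1}})$-MRBA homomorphism $\free{f}:\frmrba(A,B)\to R$ with $\free{f}\,i_B=f$, and as a morphism of relative MRBAs it also satisfies $\free{f}\,\stmap_F=\stmap_R$. It then remains to see that this same $\free{f}$ is a morphism of MTRBAs from $(\frmrba(A,B),P_{F,\Omega}^{\mfraka_{F,\Omega}},\mfraka_{F,\Omega})$ to $(R,P_{R,\Omega},\rtwist_{R,\Omega})$: it respects twists since $\free{f}(\mfraka_{F,\omega})=\free{f}(\stmap_F(\mfraka_\omega))=\stmap_R(\mfraka_\omega)=\rtwist_{R,\omega}$, and the relation $\free{f}\circ P_{F,\omega}^{\mfraka_{F,\omega}}=P_{R,\omega}\circ\free{f}$ drops out of the intertwining $\free{f}\circ P_{F,\omega}=\check{P}_{R,\omega}\circ\free{f}$ once the left twist by $\mfraka_{F,\omega}$ and the right twist by $\rtwist_{R,\omega}$ are seen to cancel against $\free{f}$. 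Together with $\free{f}\,\stmap_F=\stmap_R$ and $\free{f}\,i_B=f$ this yields the required $(A,\mrho_\Omega,\mfraka_\Omega)$-\mtrba homomorphism.

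For uniqueness I would reverse the last step: if $g:\frmrba(A,B)\to R$ is any $(A,\mrho_\Omega,\mfraka_\Omega)$-\mtrba homomorphism with $g\,i_B=f$, then necessarily $g\,\stmap_F=\stmap_R$, and untwisting shows $g$ intertwines $P_{F,\omega}=(P_{F,\omega}^{\mfraka_{F,\omega}})^{\mfraka_{F,\omega}^{-1}}$ with $\check{P}_{R,\omega}=(P_{R,\omega})^{\rtwist_{R,\omega}^{-1}}$, so $g$ is an $(A,\mrho_\Omega^{\mfraka_\Omega^{-1}})$-MRBA homomorphism with $g\,i_B=f$, whence $g=\free{f}$ by the uniqueness clause of the universal property of $\frmrba(A,B)$. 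All of the verifications are formal; conceptually the argument records that conjugation by $\mfraka_{F,\Omega}=\stmap_F(\mfraka_\Omega)$, together with its inverse on target objects, is an isomorphism between the category of $(A,\mrho_\Omega^{\mfraka_\Omega^{-1}})$-MRBAs and the category of $(A,\mrho_\Omega,\mfraka_\Omega)$-\mtrbas which fixes underlying algebras and homomorphisms, so their free objects correspond. The single point demanding genuine care---the only place where the two sides really differ---is keeping the two notions of morphism straight: an $\Omega$-\mtrba homomorphism intertwines the operators $P^{\mfraka}$ directly (Definition~\mref{de:trba}), whereas an MRBA homomorphism intertwines the associated operators $\check{P}$, and the twist family on any target object is forced to be $\stmap(\mfraka_\Omega)$; once this is arranged every identity transports with no further computation.
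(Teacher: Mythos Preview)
Your proposal is correct and follows essentially the same route as the paper: both proofs use Proposition~\mref{pp:twrbo} to pass back and forth between the MTRBA and MRBA structures, invoke the universal property of the free relative MRBA to obtain the map $\free{f}$, and then verify (via the twist compatibility $\free{f}(\mfraka_{F,\omega})=\rtwist_{R,\omega}$ and a short conjugation computation) that this same $\free{f}$ is the required MTRBA homomorphism, with uniqueness handled by untwisting any competitor. Your concluding remark framing this as a category isomorphism is a nice conceptual summary not made explicit in the paper, but the underlying argument is the same.
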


\begin{proof}
Let $(R,P_{R,\Omega},\mfraka_{R,\Omega},\stmap_R)$ be an $(A,\mrho_\Omega,\mfraka_\Omega)$-\mtrba and $f:B\to R$ an algebra homomorphism. 
To verify the desired universal property of $(\frmrba(A,B),P_{F,\Omega}^{\mfraka_{F,\Omega}},\mfraka_{F,\Omega},\stmap_F,i_B)$, we will construct a unique homomorphism $\freet{f}: \frmrba(A,B) \to R$ of $(A,\mrho_\Omega,\mfraka_\Omega)$-\mtrbas such that $\freet{f} i_B=f$.

By Proposition~\mref{pp:twrbo}, $(\frmrba(A,B),P_{F,\Omega}^{\mfraka_{F,\Omega}}, \mfraka_{F,\Omega},\stmap_F)$ is an $\Omega$-\mtrba and $(R,P_{R,\Omega}^{\mfraka_{R,\Omega}^{-1}},\stmap_R)$ is an $\Omega$-\mrba.
Also, $P_{F,\Omega}\stmap_F=\stmap_F\mrho_\Omega^{\mfraka_\Omega^{-1}}=\mfraka_{F,\Omega}^{-1}\stmap_F\mrho_\Omega\mfraka_\Omega$ means that
$\stmap_F\mrho_\Omega=\mfraka_{F,\Omega}P_{F,\Omega}\stmap_F\mfraka_\Omega^{-1}=P_{F,\Omega}^{\mfraka_{F,\Omega}}\stmap_F$, and $\stmap_R\mrho_\Omega=P_{R,\Omega}\stmap_R$ implies that
$\stmap_R\mrho_\Omega^{\mfraka_\Omega^{-1}}=P_{R,\Omega}^{\mfraka_{R,\Omega}^{-1}}\stmap_R$.
Hence,
$(\frmrba(A,B),P_{F,\Omega}^{\mfraka_{F,\Omega}}, \mfraka_{F,\Omega},\stmap_F)$ is an $(A,\mrho_\Omega,\mfraka_\Omega)$-\mtrba and $(R,P_{R,\Omega}^{\mfraka_{R,\Omega}^{-1}},\stmap_R)$ is an $(A,\mrho_\Omega^{\mfraka_\Omega^{-1}})$-\mrba.

By the universal property of the free $(A,\mrho_\Omega^{\mfraka_\Omega^{-1}})$-MRBA $(\frmrba(A,B),P_{F,\Omega},\stmap_F,i_B)$, the algebra homomorphism $f:B\to R$ induces a unique homomorphism of $(A,\mrho_\Omega^{\mfraka_\Omega^{-1}})$-MRBAs $\freem{f}: \frmrba(A,B)\to R$ such that $\freem{f} i_B=f$.
Thus we have the following diagram, where the left half is for the category of \mrbas and the right half is for the category of \mtrbas.
\vspace{-.3cm}

$$ \xymatrix{
& (\frmrba(A,B),P_{F,\Omega}) \ar[dd]^{\freem{f}} && (\frmrba(A,B),P_{F,\Omega}^{\mfraka_{F,\Omega}},\mfraka_{F,\Omega}) \ar[dd]^{\freet{f}}	& \\
(A,\mrho_\Omega^{\mfraka_\Omega^{-1}}) \ar[ur]^{j_F}\ar[dr]^{j_R} && B \ar[ul]^{i_B}\ar[dl]_{f} \ar[ur]^{i_B}\ar[dr]_f && (A,\mrho_\Omega,\mfraka_\Omega) \ar[ul]_{j_F}\ar[dl]_{j_R} \\
& (R,P_{R,\Omega}^{\mfraka_{R,\Omega}^{-1}}) && (R,P_{R,\Omega}, \mfraka_{R,\Omega}) &
}
$$

We show that $\freem{f}$ in the left half of the diagram also satisfies the universal property of $\freet{f}$ in the right half of the diagram. First note that, for $\omega\in \Omega$, we have
$$ \freem{f}(\mfraka_{F,\omega})=\freem{f} \stmap_F (\mfraka_\omega)=\stmap_R(\mfraka_\omega) =\mfraka_{R,\omega} \quad \tforall \omega\in \Omega.$$
Thus as functions,
$\mfraka_{R,\omega} \freem{f} \mfraka_{F,\omega}^{-1} =\freem{f}$ for $\omega\in \Omega.$
We then obtain
\begin{eqnarray*}
\freem{f} P_{F,\omega}^{\mfraka_{F,\omega}} -P_{R,\omega}\freem{f}
&=& \mfraka_{R,\omega}\freem{f} \mfraka_{F,\omega}^{-1} \mfraka_{F,\omega} P_{F,\omega}\mfraka_{F,\omega}^{-1}-P_{R,\omega}\mfraka_{R,\omega} \freem{f}\mfraka_{F,\omega}^{-1}\\
&=& \mfraka_{R,\omega} \freem{f} P_{F,\omega} \mfraka^{-1}_{F,\omega}-P_{R,\omega} \mfraka_{R,\omega} \freem{f} \mfraka_{F,\omega}^{-1} \\
&=& \mfraka_{R,\omega}(\freem{f} P_{F,\omega} - \mfraka_{R,\omega}^{-1}P_{R,\omega} \mfraka_{R,\omega} \freem{f})\mfraka_{F,\omega}^{-1}\\
&=& \mfraka_{R,\omega}(\freem{f} P_{F,\omega}- P_{R,\omega}^{\mfraka_{R,\omega}^{-1}} \freem{f}) \mfraka_{F,\omega}^{-1},
\end{eqnarray*}
which vanishes since $\freem{f}$ is a homomorphism of \mrbas. Thus $\freem{f}$ is compatible with the linear operators and hence is a homomorphism of $(A,\mrho_\Omega,\mfraka_\Omega)$-\mtrbas.

Suppose $\freet{f}': (\frmrba(A,B),P_{F,\Omega},\rtwist_{F,\Omega},\stmap_F)\to (R,P_{R,\Omega},\rtwist_{R,\Omega},\stmap_R)$ is another
$(A,\mrho_\Omega,\mfraka_\Omega)$-\mtrba homomorphism such that $f=\freet{f}' i_B$. Then by the same argument, we find that $\freet{f}'$ is an $(A,\mrho_\Omega^{\mfraka_\Omega^{-1}})$-\mrba homomorphism such that $f=\freet{f}' i_B$. Then we must have $\freet{f}'=\freem{f}$.
This completes the proof.
\end{proof}

Now we can apply Proposition~\mref{pp:freerel} to construct the free $(A,\mrho_\Omega,\mfraka_\Omega)$-\mtrba over $B$ via the construction of the free $(A,\mrho_\Omega^{\mfraka_\Omega^{-1}})$-\mrba over $B$ given in~\mcite{GGZy}. We recall this construction without the abbreviations used there. 

For distinction, let $(A,\phi_\Omega)$ denote a fixed $\Omega$-\mrba. Let $B=\bfk\oplus B^+$ be an augmented $\bk$-algebra. Denote
$$\frakA:=A\ot B, \quad \frakA^+:=A\ot B^+.$$
Then $\frakA=A\oplus\frakA^+$. 

Consider the $\bfk$-module
\begin{equation} \mlabel{eq:fmrba}
	\trsha(A,B):=\bigoplus_{k\geq0}\frakA\ot (\bfk\Omega\ot \frakA^+)^{\ot k}.
\end{equation}

We first define a system of linear operators $P_{F,\omega},\,\omega\in\Omega$ on $\trsha(A,B)$. Any pure tensor of $\trsha(A,B)$ is of the form
 $$\fraku=u_0\ot (\omega_1\ot u_1)\ot(\omega_2\ot u_2)\cdots\ot (\omega_n\ot u_n), u_0\in A, u_i\in \frakA^+, \omega_i\in \Omega,\, 1\leq i\leq n, n\geq0.$$ 
We then define

\begin{equation}
	P_{F,\omega}(\fraku):=\left\{\begin{array}{ll}
		\phi_\omega(u_0),&u_0\in A,n=0,\\
		\phi_\omega(u_0)\ot (\omega_1\ot u_1)\ot (\omega_2\ot u_2)\cdots\ot (\omega_n\ot u_n)& \\\quad -1_A\ot (\omega_1\ot \phi_{\omega}(u_0)u_1)\ot (\omega_2\ot u_2)\ot \cdots \ot (\omega_n\ot u_n), & u_0\in A,n>0,\\
		1_A\ot (\omega\ot u_0)\ot (\omega_1\ot u_1)\ot\cdots\ot (\omega_n\ot u_n), & u_0\in \frakA^+.
	\end{array}\right.
	\mlabel{eq:rwro0}
\end{equation}

By means of these operators, we can express a pure tensor $\fraku=u_0\ot (\omega_1\ot u_1)\ot\cdots\ot (\omega_n\ot u_n), n>0,$ of $\trsha(A,B)$ in the recursive form
$$\fraku=u_0\Big(1_A\ot (\omega_1\ot u_1)\ot \cdots \ot (\omega_n\ot u_n)\Big)
=u_0 P_{F,\omega_1} (\fraku') 
 \text{ with } \fraku':= u_1\ot(\omega_2\ot u_2)\cdots\ot (\omega_n\ot u_n).$$
We can then recursively define a binary operation $\reypr=\reypr_\Omega$ on $\trsha(A,B)$. For any pure tensors 
$$\fraku=u_0\ot (\omega_1\ot u_1)\ot \cdots (\omega_m\ot u_m)=u_0P_{F,\omega_1}(\fraku')\in \frakA\ot (\bfk\Omega\ot \frakA^+)^{\ot m}$$ 
and 
$$\frakv=v_0\ot (\vep_1\ot v_1)\ot \cdots (\vep_n\ot v_n) = v_0P_{F,\vep_1} (\frakv')\in \frakA\ot (\bfk\Omega\ot \frakA^+)^{\ot n}$$ 
with their recursive forms when $m,n>0$, define 
{\small
	\begin{equation}
			\fraku\reypr\frakv:=\fraku\reypr_\Omega\frakv:=\begin{cases}
				u_0v_0, & \text{if } m=n=0, \\
				u_0v_0\ot (\vep_1\ot v_1)\ot \cdots \ot (\vep_n\ot v_n)=u_0v_0P_{F,\vep_1}(\frakv'),&\text{if }m=0,\,n>0,\\
				u_0v_0 \ot (\omega_1\ot u_1)\ot \cdots \ot (\omega_m\ot u_m)=u_0v_0P_{F,\omega_1}(\fraku'),&\text{if }m>0,\,n=0,\\	 
				u_0v_0P_{F,\vep_1}\Big(P_{F,\omega_1}(\fraku')\reypr \frakv'\Big)+
				u_0v_0P_{F,\omega_1}\Big(\fraku'\reypr P_{F,\vep_1}(\frakv')\Big), 
				&\text{if }m,n>0.
			\end{cases}
		\mlabel{eq:rwshp0}
	\end{equation}
}
Then extend $\reypr$ to all of $\trsha(A,B)$ by bilinearity. Let $j_F:A\to \trsha(A,B)$ be the natural inclusion.

\begin{theorem} $($\mcite{GGZy}$)$
	Let $(A,\phi_\Omega)$ be an $\Omega$-\mrba and $B$ an augmented $\bfk$-algebra. Then the quadruple $\Big((\trsha(A,B),\reypr),P_{F,\Omega},j_F,i_B\Big)$
	is the free
	$(A,\phi_\Omega)$-\mrba over $B$.
	\mlabel{th:rfcwr0}
\end{theorem}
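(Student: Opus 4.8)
The plan is to verify the two halves of the assertion in turn: that $\big((\trsha(A,B),\reypr),P_{F,\Omega},j_F,i_B\big)$ is an object of the category of $(A,\phi_\Omega)$-\mrbas over $B$, and that it satisfies the stated universal property. Throughout, the uniform tool is induction on the total tensor degree $k$ of a pure tensor in $\frakA\ot(\bfk\Omega\ot\frakA^+)^{\ot k}$, used together with the recursive form $\fraku=u_0\reypr P_{F,\omega_1}(\fraku')$ recorded just before Eq.~\meqref{eq:rwshp0}, which strips off one $(\bfk\Omega\ot\frakA^+)$-layer at a time. As a preliminary, I would check that the three clauses of Eq.~\meqref{eq:rwro0} are exhaustive and mutually consistent on the decomposition $\frakA=A\oplus\frakA^+$ (and on the tensor degrees), so that $P_{F,\omega}$ is a well-defined linear operator and hence $\reypr$, defined by recursion through the $P_{F,\omega}$ and bilinearity, is a well-defined binary operation.

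For the first half: commutativity of $\reypr$ is immediate from the symmetry of the clauses of Eq.~\meqref{eq:rwshp0}, $1_A\ot 1_B$ is a two-sided unit, and associativity $(\fraku\reypr\frakv)\reypr\frakw=\fraku\reypr(\frakv\reypr\frakw)$ is proved by induction on the sum of the three total degrees; in the generic case (all positive) both sides are expanded by the last clause of Eq.~\meqref{eq:rwshp0}, the inner products are put back in recursive form, and one closes with the inductive hypothesis, in the same way as for the ordinary shuffle product, the coefficients in $\frakA$ and the edge labels in $\Omega$ being transported along untouched. That $(\trsha(A,B),P_{F,\Omega})$ satisfies the weight-zero matching Rota-Baxter identity Eq.~\meqref{eq:mrb} is essentially built into the construction: a case analysis on the recursive forms of $\fraku$ and $\frakv$ shows that when the leading tensor factors lie in $\frakA^+$ the identity is exactly the last clause of Eq.~\meqref{eq:rwshp0} read through the recursive forms of $P_{F,\alpha}(\fraku)$ and $P_{F,\beta}(\frakv)$, while the remaining cases, where $\fraku$ or $\frakv$ is a coefficient from $A$, reduce to the matching Rota-Baxter identity already holding in $(A,\phi_\Omega)$, the bookkeeping in the first two clauses of Eq.~\meqref{eq:rwro0}, and (for positive degrees) the inductive hypothesis. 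Finally $j_F\colon A\hookrightarrow\trsha(A,B)$ is an algebra homomorphism (the $m=n=0$ clause) and an $\Omega$-\mrba homomorphism ($P_{F,\omega}(j_F(a))=\phi_\omega(a)=j_F(\phi_\omega(a))$, by the first clause of Eq.~\meqref{eq:rwro0}), and $i_B\colon b\mapsto 1_A\ot b$ is an algebra homomorphism; hence $\big((\trsha(A,B),\reypr),P_{F,\Omega},j_F,i_B\big)$ is an $(A,\phi_\Omega)$-\mrba over $B$.

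For the universal property, let $(R,P_{R,\Omega})$ be an $(A,\phi_\Omega)$-\mrba with structure map $j_R\colon A\to R$ and $f\colon B\to R$ an algebra homomorphism. I would set $\mu\colon\frakA=A\ot B\to R$, $\mu(a\ot b):=j_R(a)f(b)$ (well defined and multiplicative since $R$ is commutative and $j_R,f$ are homomorphisms), and define $\freem{f}\colon\trsha(A,B)\to R$ by $\freem{f}|_{\frakA}:=\mu$ together with the recursion $\freem{f}\big(u_0\reypr P_{F,\omega_1}(\fraku')\big):=\mu(u_0)\,P_{R,\omega_1}\big(\freem{f}(\fraku')\big)$; since this recursion is multilinear in the tensor slots (the $\bfk\Omega$-slots extended linearly), $\freem{f}$ is a well-defined linear map, and $\freem{f}\,i_B=f$ and $\freem{f}\,j_F=j_R$ are read off from $\mu$. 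Then one proves, by a single induction on total tensor degree, that $\freem{f}$ both intertwines the operators ($\freem{f}P_{F,\omega}=P_{R,\omega}\freem{f}$) and is multiplicative: the intertwining splits along the three clauses of Eq.~\meqref{eq:rwro0}---the $u_0\in\frakA^+$ clause is direct from the recursion, the $u_0\in A,\,n=0$ clause uses that $j_R$ is an \mrba homomorphism, and the $u_0\in A,\,n>0$ clause follows by applying the matching Rota-Baxter identity of $R$ to $P_{R,\omega}\big(j_R(u_0)P_{R,\omega_1}(\freem{f}(\fraku'))\big)$ together with multiplicativity in lower degree---while multiplicativity is immediate from $\mu$ in the degenerate clauses of Eq.~\meqref{eq:rwshp0} and, in its last clause, is exactly the matching Rota-Baxter identity of $R$ reconciling term by term the two summands produced by $\reypr$. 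Uniqueness holds because $\trsha(A,B)$ is generated as an $\Omega$-operated algebra by $j_F(A)\cup i_B(B)$: every pure tensor arises from the degree-zero part $\frakA$ (spanned by the products $j_F(a)\reypr i_B(b)$) by iterating $\reypr$ and the $P_{F,\omega}$ through its recursive form, so any homomorphism of $(A,\phi_\Omega)$-\mrbas out of $\trsha(A,B)$ is determined once its restriction to $i_B(B)$ is prescribed (its restriction to $j_F(A)$ being forced to be $j_R$).

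I expect the main obstacle to be the two interlocking inductions---associativity of $\reypr$ in the first half, and the simultaneous intertwining/multiplicativity of $\freem{f}$ in the second---and in each the critical step is the case where both arguments have positive degree, where one must expand each side into a sum of two bracketed terms, rewrite the inner brackets in recursive form, and check that the matching Rota-Baxter identity (in $\trsha(A,B)$, respectively in $R$) makes the two expansions coincide. The $\frakA$-coefficients and $\Omega$-labels ride along passively and cause no essential difficulty, but the asymmetry between the $u_0\in A$ and $u_0\in\frakA^+$ clauses of Eq.~\meqref{eq:rwro0}---the compensating term $-\,1_A\ot(\omega_1\ot\phi_\omega(u_0)u_1)\ot\cdots$ appearing when the leading $1_A$ is replaced by a genuine coefficient---calls for careful bookkeeping in the degenerate subcases.
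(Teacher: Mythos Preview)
The paper does not supply a proof of this statement: Theorem~\mref{th:rfcwr0} is cited from~\mcite{GGZy} and stated without argument, the present paper using only its conclusion (together with Proposition~\mref{pp:freerel}) to derive Theorem~\mref{th:rfcwr}. Your proposal therefore cannot be compared against a proof in this paper, but it can be assessed on its own terms.

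Your outline is correct and follows the standard route for free objects of shuffle type: associativity, commutativity and unit of $\reypr$ by induction on total tensor degree; the matching Rota--Baxter identity read off the last clause of Eq.~\meqref{eq:rwshp0} when both leading factors lie in $\frakA^+$, with the mixed cases handled through the compensating terms in the first two clauses of Eq.~\meqref{eq:rwro0} and the identity already valid in $(A,\phi_\Omega)$; then the construction of $\freem{f}$ by recursion on degree, with intertwining and multiplicativity verified case by case and closed by the matching Rota--Baxter identity in $R$. This is essentially what one expects the argument in~\mcite{GGZy} to be.

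One small sharpening: in the $u_0\in A$, $n>0$ clause of the intertwining check you invoke ``multiplicativity in lower degree,'' but in fact all that is needed there is that $\mu$ is multiplicative on $\frakA$ (so that $\mu(\phi_\omega(u_0)u_1)=j_R(\phi_\omega(u_0))\,\mu(u_1)$) together with $j_R\phi_\omega=P_{R,\omega}j_R$ and the matching Rota--Baxter identity in $R$; the full inductive hypothesis on $\freem{f}$ is not required. Conversely, in the multiplicativity check the only instance of intertwining you need is the trivial one for $P_{F,\omega}$ applied to a tensor with leading factor in $\frakA^+$, which follows directly from the recursive definition of $\freem{f}$. So the two verifications can in fact be carried out sequentially rather than as a joint induction, which simplifies the bookkeeping you flag as the main obstacle.
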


Now let $(A,\mrho_\Omega,\mfraka_\Omega)$ be an $\Omega$-\mtrba. Applying Proposition~\mref{pp:freerel}, we can adopt Theorem~\mref{th:rfcwr0} to construct the free $(A,\mrho_\Omega,\mfraka_\Omega)$-\mtrba over $B$ as follows.
Take $\phi_\Omega:=\mrho_\Omega^{\mfraka_\Omega^{-1}}$ in Theorem~\mref{th:rfcwr0} to give the free $(A,\mrho_\Omega^{\mfraka_\Omega^{-1}})$-MRBA over $B$ as $\Big((\trsha(A,B),\reypr),P_{F,\Omega},j_F,i_B\Big)$. Then with the operator $P_{F,\Omega}$ defined in  Eq.~\meqref{eq:rwro0}, the operator $P_{F,\Omega}^{\mfraka_{F,\Omega}}$ on $\trsha(A,B)$ given in Proposition~\mref{pp:freerel} becomes
\begin{equation}
	 P_{F,\Omega}^{\mfraka_{F,\Omega}}(\fraku):=\left\{\begin{array}{ll}
		\mrho_\omega(u_0),&u_0\in A,n=0,\\
		 \mrho_\omega(u_0)\ot (\omega_1\ot u_1)\ot \cdots\ot (\omega_n\ot u_n)& \\\quad -\mfraka_\omega\ot (\omega_1\ot \mfraka_\omega^{-1}\mrho_\omega(u_0)u_1)\ot (\omega_2\ot u_2)\ot\cdots \ot (\omega_n\ot u_n), & u_0\in A,n>0,\\
		\mfraka_\omega\ot (\omega\ot \mfraka_\omega^{-1}u_0)\ot(\omega_1\ot u_1)\ot\cdots\ot (\omega_n\ot u_n), & u_0\in \frakA^+.
	\end{array}\right.
	\mlabel{eq:rwro}
\end{equation}
for any $\fraku=u_0\ot (\omega_1\ot u_1)\ot \cdots\ot (\omega_n\ot u_n)\in\frakA\ot(\bfk\Omega\ot \frakA^+)^{\ot n},\,n\geq0$.

Then by Proposition~\mref{pp:freerel}, we obtain
\begin{theorem}
	Let $(A,\mrho_\Omega,\mfraka_\Omega)$ be an $\Omega$-\mtrba and $B$ an augmented $\bfk$-algebra. Then with the $\bfk$-module $\trsha(A,B)$ in Eq.~\meqref{eq:fmrba}, the multiplication $\reypr$ in Eq.~\meqref{eq:rwshp0}, the linear operators $P_{F,\Omega}$ in Eq.~\meqref{eq:rwro} and $\mfraka_{F,\Omega}:=j_F(\mfraka_\Omega)\subseteq \frmrba(A,B)$, the quintuple $\Big((\trsha(A,B),\reypr_\Omega),P_{F,\Omega}^{\mfraka_{F,\Omega}}, \mfraka_{F,\Omega},\stmap_F,i_B\Big)$
	is the free
	$(A,\mrho_\Omega,\mfraka_\Omega)$-\mtrba over $B$.
	\mlabel{th:rfcwr}
\end{theorem}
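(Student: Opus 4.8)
The plan is to derive Theorem~\ref{th:rfcwr} as an essentially immediate consequence of Proposition~\ref{pp:freerel} together with the explicit model for free relative \mrbas recalled in Theorem~\ref{th:rfcwr0}. The content of Proposition~\ref{pp:freerel} is precisely that twisting the operators of a free $(A,\mrho_\Omega^{\mfraka_\Omega^{-1}})$-MRBA over $B$ by $\mfraka_{F,\Omega}:=\stmap_F(\mfraka_\Omega)$ produces the free $(A,\mrho_\Omega,\mfraka_\Omega)$-\mtrba over $B$, with the underlying algebra, the coefficient map $\stmap_F=j_F$, and the map $i_B$ all unchanged. So there is nothing to re-prove about the universal property; the only remaining task is to make the twisted operators $P_{F,\Omega}^{\mfraka_{F,\Omega}}$ concrete, i.e. to justify the formula in Eq.~\eqref{eq:rwro}.

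First I would instantiate Theorem~\ref{th:rfcwr0} with $\phi_\Omega:=\mrho_\Omega^{\mfraka_\Omega^{-1}}$, which is an $\Omega$-\mrba structure on $A$ by Proposition~\ref{pp:twrbo}. This presents the free $(A,\mrho_\Omega^{\mfraka_\Omega^{-1}})$-\mrba over $B$ concretely as $\big((\trsha(A,B),\reypr_\Omega),P_{F,\Omega},j_F,i_B\big)$, with underlying module $\trsha(A,B)$ as in Eq.~\eqref{eq:fmrba}, product $\reypr_\Omega$ as in Eq.~\eqref{eq:rwshp0}, and operators $P_{F,\Omega}$ as in Eq.~\eqref{eq:rwro0}. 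Feeding this model into Proposition~\ref{pp:freerel} then yields the theorem once $P_{F,\Omega}^{\mfraka_{F,\Omega}}$ is identified. For that identification I would compute directly from the definition $P_{F,\omega}^{\mfraka_{F,\omega}}(\fraku)=\mfraka_{F,\omega}\reypr P_{F,\omega}\big(\mfraka_{F,\omega}^{-1}\reypr\fraku\big)$, noting that $\mfraka_{F,\omega}=j_F(\mfraka_\omega)$ lies in the degree-zero component $\frakA=A\ot B$ (as $\mfraka_\omega\ot 1_B$) and is invertible there since $j_F$ is an algebra homomorphism and $\mfraka_\omega$ is invertible in $A$. By the case $m=0$ of the product rule Eq.~\eqref{eq:rwshp0}, multiplication by such a degree-zero element acts on a pure tensor $\fraku=u_0\ot(\omega_1\ot u_1)\ot\cdots\ot(\omega_n\ot u_n)$ simply by multiplying into the leading coefficient $u_0$, sending $A$ into $A$ and $\frakA^+$ into $\frakA^+$. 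Substituting $\mfraka_{F,\omega}^{-1}\reypr\fraku$ into Eq.~\eqref{eq:rwro0}, using the identity $\phi_\omega(r)=\mfraka_\omega^{-1}\mrho_\omega(\mfraka_\omega r)$, and then left-multiplying by $\mfraka_\omega$, one checks each of the three branches — $u_0\in A$ with $n=0$, $u_0\in A$ with $n>0$, and $u_0\in\frakA^+$ — and recovers exactly Eq.~\eqref{eq:rwro}. With this in hand, Proposition~\ref{pp:freerel} gives that $\big((\trsha(A,B),\reypr_\Omega),P_{F,\Omega}^{\mfraka_{F,\Omega}},\mfraka_{F,\Omega},\stmap_F,i_B\big)$ is the free $(A,\mrho_\Omega,\mfraka_\Omega)$-\mtrba over $B$.

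There is no genuinely hard step here: the substance is already packaged into Proposition~\ref{pp:freerel} and Theorem~\ref{th:rfcwr0}, and what remains is routine bookkeeping. The one point to handle carefully is keeping track of which summand of $\trsha(A,B)=\bigoplus_{k\ge0}\frakA\ot(\bfk\Omega\ot\frakA^+)^{\ot k}$ a tensor occupies after rescaling its leading coefficient by $\mfraka_\omega^{\pm1}$ — in particular that $\mfraka_\omega^{-1}u_0$ stays in $\frakA^+=A\ot B^+$ when $u_0\in\frakA^+$, so that the correct branch of Eq.~\eqref{eq:rwro0} is invoked — together with the already-noted fact that $\mfraka_{F,\omega}$ is genuinely invertible in $\trsha(A,B)$.
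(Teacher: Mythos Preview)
Your proposal is correct and follows essentially the same route as the paper: instantiate Theorem~\ref{th:rfcwr0} with $\phi_\Omega:=\mrho_\Omega^{\mfraka_\Omega^{-1}}$, apply Proposition~\ref{pp:freerel} to twist the operators, and verify that the twisted operators $P_{F,\Omega}^{\mfraka_{F,\Omega}}$ are given by Eq.~\eqref{eq:rwro}. You supply more detail than the paper does on the case-by-case verification of Eq.~\eqref{eq:rwro} (and correctly flag that $\mfraka_\omega^{-1}u_0$ stays in $\frakA^+$ when $u_0\in\frakA^+$), but the argument is the same.
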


\subsection{Operator linearity of separable Volterra equations}

We now apply the construction of free \mtrbas to obtain the operator linearity (Definition~\mref{de:oplin}) of integral equations with separable Volterra operators.

\begin{theorem}	\mlabel{thm:oplin}
	
Let $I\subseteq \RR$ be an open interval and $A:=C(I)$. Let $K_\Omega:=(K_\omega(x,t)\,|\,\omega\in \Omega)$ be a family of separable kernels $K_\omega(x,t)=k_\omega(x)h_\omega(t)\in C(I^2)$ with $k_\omega(x)$ zero free on $I$.
\delete{
Let $P_{K_\Omega}$ be the corresponding family of Volterra operators $P_{K_\omega}(f):=\int_a^xK_\omega(x,t)f(t)dt$ with $a\in I$ and let 
$\mfraka_\omega:=\tfrac{k_\omega(x)}{k_\omega(a)}, \omega\in\Omega$.  
}
Let $(A,P_{K_\Omega},\mfraka_\Omega)$ be the \mtrba in Theorem~\mref{thm:twfunction}. For any given set $Y$, any element (and its corresponding integral equation) in the free $(A,\mrho_\Omega)$-algebra $(\ralgw(\Omega,A[Y]),\ropw_\Omega)$ over $B$ defined in Theorem~\mref{thm:freea} is equivalent to one that is \oplin and in which each of the iterated operators acts on variable functions $($that is, on functions in $A[Y]^+$$)$.
\end{theorem}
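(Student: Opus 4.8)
The plan is to mimic the reduction of differential equations in Proposition~\mref{pp:diffeq}, with the matching twisted Rota--Baxter relations \meqref{eq:mtrb} (at weight zero) playing the role of the Leibniz rule. Recall from Definition~\mref{de:intequiv} and Proposition~\mref{pp:reduce} that two elements of $\ralgw(\Omega,A[Y])$ are \emph{equivalent}, hence yield integral equations with the same solution set in $(A,P_{K_\Omega})$, as soon as their difference lies in the ideal $J_{P_{K_\Omega}}$ of \meqref{eq:kernalpha} of universally valid operator identities. So it suffices to exhibit an operated ideal $\frakJ\subseteq J_{P_{K_\Omega}}$ such that every element of $\ralgw(\Omega,A[Y])$ is congruent modulo $\frakJ$ to one of the asserted form, and then take that representative as $\psi$. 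I would construct $\frakJ$ as the kernel of the canonical map onto the free matching twisted Rota--Baxter algebra: by Theorem~\mref{th:rfcwr}, the module $\trsha(A,\bfk[Y])$ with the product $\reypr_\Omega$ of \meqref{eq:rwshp0}, the operators $P_{F,\Omega}^{\mfraka_{F,\Omega}}$ of \meqref{eq:rwro}, and twist $\mfraka_{F,\Omega}=j_F(\mfraka_\Omega)$ is the free $(A,P_{K_\Omega},\mfraka_\Omega)$-\mtrba over $\bfk[Y]$, so $\big(\trsha(A,\bfk[Y]),P_{F,\Omega}^{\mfraka_{F,\Omega}}\big)$ with structure map $j_F$ is an $(A,P_{K_\Omega})$-operated algebra and the universal property of $\ralgw(\Omega,A[Y])$ from Theorem~\mref{thm:freea}, applied to $i_B\colon\bfk[Y]\to\trsha(A,\bfk[Y])$, yields a unique $(A,P_{K_\Omega})$-operated algebra homomorphism
\[
\pi\colon\ralgw(\Omega,A[Y])\lra\trsha(A,\bfk[Y]),\qquad
\pi\,\ropw_\omega=P_{F,\omega}^{\mfraka_{F,\omega}}\,\pi,\quad \pi\,j_\ralgw=j_F,\quad \pi\,i_B=i_B ;
\]
I then set $\frakJ:=\ker\pi$.

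The technical heart is a computation inside $\trsha(A,\bfk[Y])$, which I would carry out by induction on the tensor length $n$, working from the innermost operator outward and using only \meqref{eq:rwshp0} and \meqref{eq:rwro}: every pure tensor satisfies
\[
u_0\ot(\omega_1\ot u_1)\ot\cdots\ot(\omega_n\ot u_n)
= c\,\reypr\,P_{F,\omega_1}^{\mfraka_{F,\omega_1}}\!\Big(g_1\,\reypr\,P_{F,\omega_2}^{\mfraka_{F,\omega_2}}\!\big(g_2\,\reypr\cdots\reypr\,P_{F,\omega_n}^{\mfraka_{F,\omega_n}}(g_n)\big)\Big),
\]
with $c:=u_0\mfraka_{\omega_1}^{-1}\in\frakA=A[Y]$, $g_i:=\mfraka_{\omega_i}u_i\mfraka_{\omega_{i+1}}^{-1}\in\frakA^+=A[Y]^+$ for $1\le i<n$, and $g_n:=\mfraka_{\omega_n}u_n\in\frakA^+=A[Y]^+$ (each inductive step only multiplies one coefficient into the leading factor of a tensor and applies one extension operator; here $\frakA=A[Y]$ and $\frakA^+=A[Y]^+$ since $B=\bfk[Y]$). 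Such an expression is \oplin, the operators $P_{F,\omega_i}^{\mfraka_{F,\omega_i}}$ occurring only in composition and never in a product, and each factor on which an operator acts, namely each $g_i$, lies in $A[Y]^+$. Since $\trsha(A,\bfk[Y])$ is spanned by pure tensors, it is spanned by such operator-linear expressions; and $\pi$ is surjective, its image being an operated subalgebra containing $j_F(A)$ and $i_B(\bfk[Y])$, hence, by the display, all of $\trsha(A,\bfk[Y])$. I expect this inductive bookkeeping with $\reypr_\Omega$ and $P_{F,\Omega}^{\mfraka_{F,\Omega}}$ to be the only substantive step.

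Next I would verify $\frakJ=\ker\pi\subseteq J_{P_{K_\Omega}}$, in the same manner as the inclusion of the differential operated ideal in $J_{\mrho_\Omega}$ within the proof of Proposition~\mref{pp:diffeq}, but now invoking Theorem~\mref{thm:twfunction} in place of the Leibniz rule. Fix a map $f_Y\colon Y\to A$. Since $(A,P_{K_\Omega},\mfraka_\Omega)$ is by Theorem~\mref{thm:twfunction} an $\Omega$-\mtrba, it is an $(A,P_{K_\Omega},\mfraka_\Omega)$-\mtrba over itself via $\id_A$, so the universal property of the free \mtrba $\trsha(A,\bfk[Y])$ yields an $(A,P_{K_\Omega},\mfraka_\Omega)$-\mtrba homomorphism $\widetilde{f_Y}\colon\trsha(A,\bfk[Y])\to A$ over $f_Y$, which is in particular a homomorphism of $(A,P_{K_\Omega})$-operated algebras with $\widetilde{f_Y}\,j_F=\id_A$. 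Then $\widetilde{f_Y}\circ\pi$ and the evaluation homomorphism $\eva\big|_{Y\mapsto f_Y,\,\ropw_\Omega\mapsto P_{K_\Omega}}$ are both homomorphisms of $(A,P_{K_\Omega})$-operated algebras $\ralgw(\Omega,A[Y])\to A$ restricting to $f_Y$ on $\bfk[Y]$, so they coincide by the uniqueness clause of Theorem~\mref{thm:freea}. Hence $\ker\pi\subseteq\ker\big(\eva\big|_{Y\mapsto f_Y,\,\ropw_\Omega\mapsto P_{K_\Omega}}\big)=J_{P_{K_\Omega},f_Y}$, and intersecting over all $f_Y$ gives $\ker\pi\subseteq J_{P_{K_\Omega}}$ by \meqref{eq:kernalpha}.

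Finally I would conclude. Given $\phi\in\ralgw(\Omega,A[Y])$, the displayed identity expresses $\pi(\phi)$ as a $\bfk$-linear combination $\sum_j c_j\,\reypr\,P_{F,\omega_{j,1}}^{\mfraka_{F,\omega_{j,1}}}\!\big(g_{j,1}\reypr\cdots\reypr P_{F,\omega_{j,n_j}}^{\mfraka_{F,\omega_{j,n_j}}}(g_{j,n_j})\big)$ with $c_j\in A[Y]$ and all $g_{j,i}\in A[Y]^+$. Replacing each $P_{F,\omega}^{\mfraka_{F,\omega}}$ by $\ropw_\omega$, each $\reypr_\Omega$ by the product of $\ralgw(\Omega,A[Y])$, and keeping the $c_j,g_{j,i}$ inside $A[Y]\subseteq\ralgw(\Omega,A[Y])$ (on which $\pi$ is the canonical embedding), I obtain
\[
\psi:=\sum_j c_j\,\ropw_{\omega_{j,1}}\!\Big(g_{j,1}\,\ropw_{\omega_{j,2}}\big(g_{j,2}\cdots\ropw_{\omega_{j,n_j}}(g_{j,n_j})\big)\Big)\in\ralgw(\Omega,A[Y]),
\]
which is \oplin with every iterated operator acting on a function in $A[Y]^+$, and which satisfies $\pi(\psi)=\pi(\phi)$, i.e. $\phi-\psi\in\ker\pi\subseteq J_{P_{K_\Omega}}$. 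Therefore $\phi$ and $\psi$ are equivalent in the sense of Definition~\mref{de:intequiv}, and by Proposition~\mref{pp:reduce} the integral equations $\phi=0$ and $\psi=0$ have the same solution set in $(A,P_{K_\Omega})$; this is the assertion, and in particular it reduces any separable Volterra equation to an operator-linear one in the sense of Definition~\mref{de:oplin}.
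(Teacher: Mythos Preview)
Your proposal is correct and follows essentially the same route as the paper: both arguments pass to the free $(A,P_{K_\Omega},\mfraka_\Omega)$-\mtrba $\trsha(A,\bfk[Y])$ of Theorem~\mref{th:rfcwr}, use that the kernel of the comparison map (equivalently, the operated ideal $I_{\rm TRB}$ generated by the relations~\meqref{eq:mtrb}) is contained in $J_{P_{K_\Omega}}$, and read off operator linearity from the tensor form $\bigoplus_{k\ge 0} A[Y]\ot(\bfk\Omega\ot A[Y]^+)^{\ot k}$. Your version is somewhat more explicit than the paper's in two places---the inductive identity expressing each pure tensor as $c\reypr P_{F,\omega_1}^{\mfraka_{F,\omega_1}}\!\big(g_1\reypr\cdots\big)$, and the explicit lift of this expression back to $\psi\in\ralgw(\Omega,A[Y])$---whereas the paper simply invokes the isomorphism $\ralgw(\Omega,A[Y])/I_{\rm TRB}\cong\trsha(A,\RR[Y])$ and observes that the right-hand side is manifestly operator linear; but this is a difference in level of detail, not in strategy.
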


As noted after Definition~\mref{de:intequiv}, this theorem shows that in order to study integral equations of a Volterra operator with separable kernels, we only need to consider operator linear integral equations with these Volterra operators.

\begin{proof}
	Consider the defining relation of the $\Omega$-matching \tw Rota-Baxter operator
\begin{equation}\mlabel{eq:rrb2}
			 \ropw_\alpha(u)\ropw_\beta(v)-\mfraka_\alpha\ropw_\beta\left(\mfraka_\alpha^{-1}\ropw_\alpha(u)v\right) -\mfraka_\beta\ropw_\alpha\left(\mfraka_\beta^{-1}u\ropw_\beta(v)\right) \tforall u, v\in \ralgw(\Omega,A[Y]), \alpha, \beta\in \Omega,
\end{equation}
	in $\ralgw(\Omega,A[Y])$.
	Since $(A,P_{K_\Omega},\mfraka_\Omega)$ is an $\Omega$-\mtrba, for any $f: Y\to A$, the induced $(A,P_{K_\Omega},\mfraka_\Omega)$-\mtrba homomorphism $\free{f}$ sends the expressions in Eq.~\meqref{eq:rrb2} to zero. Thus the operated ideal $I_{\rm TRB}$ generated by these expressions is contained in the ideal $J_{\mrho_\Omega}$ defined in Definition~\mref{de:intequiv}, where $\mrho_\Omega=P_{K_\Omega}$.
	
	By definition, the quotient $(A,P_{K_\Omega})$-algebra $\ralgw(\Omega,A[Y])/I_{\rm TRB}$ is the free $(A,P_{K_\Omega},\mfraka_\Omega)$-\mtrba. With the assumption on $(A,P_{K_\Omega},\mfraka_\Omega)$ in the theorem, by Theorem~\mref{th:rfcwr}, this free object is also given by $\trsha(A,B):=\Big((\trsha(A,B),\reypr), P_{F,\Omega}^{\mfraka_{F,\Omega}},\mfraka_{F,\Omega},j_F,i_B\Big)$ with $B=\RR[Y]$. Thus we have a natural isomorphism
	$$\ralgw(\Omega,A[Y])/I_{\rm TRB} \cong \trsha(A,\RR[Y])$$
	of $(A,P_{K_\Omega},\mfraka_\Omega)$-\mtrbas. This means that, in particular, any element $\phi(\ropw_\Omega,A,Y)$ in $\ralgw(\Omega,A[Y])$ is congruent modulo $I_{\rm TRB}$ to an element of $\trsha(A,\RR[Y])$. 
	Since 
	$$\trsha(A,\RR[Y]):=\bigoplus_{k\geq0}A[Y]\ot (\bfk\Omega\ot A[Y]^+)^{\ot k},$$ 
	the latter element is operator linear and each of the iterated operators acts on variable functions. Since $I_{\rm TRB}\subseteq J_{\mrho_\Omega}$, by Definition~\mref{de:intequiv}, $\phi(\ropw_\Omega,A,Y)$ is equivalent to an integral equation in $\ralgw(\Omega,A[Y])$ that is operator linear with the mentioned additional property.
\end{proof}

We next apply the theorem to some examples of Volterra equations, beginning with the ones in Example~\mref{ex:classic} that motivated our study.

\begin{exam}
	\begin{enumerate}
		\item
		We first note that the Volterra population model in Example~\mref{ex:classic}~ \meqref{it:volt} is already \oplin.
		\item
		The Thomas-Fermi equation from Example~\mref{ex:classic}, Eq.~\meqref{eq:tf} is already \oplin. But there is no variable function between the outer and inner integrals, as prescribed in Theorem~\mref{thm:oplin} (by taking the independent variable to be $z=y^{1/2}$). Integration by parts gives
		$$\int_0^x\int_0^t s^{-1/2} y(s)^{3/2} ds\,dt
		= x\int_0^x s^{-1/2}y(s)^{3/2} ds -\int_0^x t^{1/2}y(t)^{3/2}dt,$$
		now \oplin with the desired form.
	\end{enumerate}
\end{exam}

As shown in Theorem~\mref{thm:twfunction}, the Volterra integral operator
\[P_K(f) := k(x)\int_a^xh(t)f(t)dt\]
is a \tw Rota-Baxter operator by $\mfraka=\frac{k(x)}{k(a)}$, and thus satisfies the twisted Rota-Baxter identity
\[P_K(f)P_K(g)=\mfraka P_K\left(\mfraka^{-1}(P_K(f)g+fP_K(g))\right)\]
for any $f(x),g(x) \in C(I)$.
We apply this identity in the following Volterra equations to obtain operator linear forms.
Naturally, these examples are guaranteed to work thanks to Theorem~\mref{thm:oplin}.
\begin{exam}
\begin{enumerate}
		\item Let the kernel be $K(x,t) = e^{-x+t}$ and $a=0$, so $\mfraka = e^{-x}$.  On $C(\RR)$, the Volterra equation
		{\small $$0 = \left(\int_0^xe^{-x+t}f(t)\int_0^te^{-t+u}g(u)\,du\,dt\right)\left(\int_0^xe^{-x+t}h(t)\,dt\right) - \left(\int_0^xe^{-x+t}f(t)\int_0^te^{-t+u}h(u)\,du\,dt\right)\left(\int_0^xe^{-x+t}g(t)\,dt\right)$$
		}
		written in operator form is
{\small		\begin{align*}
			0 &= P_K(fP_K(g))P_K(h) - P_K(fP_K(h))P_K(g) \\
			&= \mfraka P_K(\mfraka^{-1}(P_K(fP_K(g))h + fP_K(g)P_K(h))) - \mfraka P_K(\mfraka^{-1}(P_K(fP_K(h))g + fP_K(h)P_K(g))) \\
			&= \mfraka P_K(\mfraka^{-1}hP_K(fP_K(g)))-\mfraka P_K(\mfraka^{-1}gP_K(fP_K(h))).
		\end{align*}
	}
Rewriting this back using the integral notations, we obtain
		{\small \begin{align*}
				0 &=
e^{-x}\int_0^xe^{-x+t}\left(e^t h(t)\int_0^te^{-t+u}\left(f(u)\int_0^ue^{-u+s}g(s)\,ds\right)\,du\right)\,dt\\
&\hskip1cm
- e^{-x}\int_0^xe^{-x+t}\left(e^t g(t)\int_0^te^{-t+u}\left(f(u)\int_0^ue^{-u+s}h(s)\,ds\right)\,du\right)\,dt.
			\end{align*}
}
		Observe that this equation is operator linear and each of the iterated integrals acts on the variable functions $f, g$ and $h$.
\item For $K(x,t) = xt$ and $a=1$, we have $\mfraka = x$.  On $C((1,\infty))$, we can rewrite the equation
		\begin{align*}
			f(x) &= \left(\int_1^x xtf(t)g(t)\,dt\right)\left(\int_1^x xtf(t)\int_1^t tuh(u)\,du\,dt\right)
\end{align*}
as
	\begin{align*}
f	&= P_K(fg)P_K(fP_K(h)) \\
			&= \mfraka P_K\left(\mfraka^{-1}(P_K(fg)fP_K(h) + fgP_K(fP_K(h)))\right) \\
			&=\mfraka P_K\left(fP_K\left(\mfraka^{-1}hP_K(fg)\right)\right) + \mfraka P_K\left(fP_K\left(\mfraka^{-1}fgP_K(h)\right)\right) + \mfraka P_K\left(\mfraka^{-1} fgP_K(fP_K(h))\right)
\end{align*}
Again, the last expression rewrites to integrals in operator linear forms and each of the iterated integrals acts on the variable functions $f, g$ and $h$.
	\end{enumerate}
\end{exam}

\noindent
{\bf Acknowledgments.} This work is supported by Natural Science Foundation of China (Grant Nos. 12071094, 11771142,  11771190) and the China Scholarship Council (No. 201808440068).
Y. Li thanks Rutgers University -- Newark for its hospitality during his visit in 2018-2019.

\end{document}